\title{Lifting spectral triples to \\ noncommutative principal bundles}
\author{Kay Schwieger and Stefan Wagner}
\date{}
	\newlist{equivalence}{enumerate}{1}
	\setlist[equivalence]{label=(\alph*)} 
\tikzstyle{every picture}+=[remember picture,inner xsep=0,inner ysep=0.25ex]
\newtheoremstyle{plain}
  {\topsep}   
  {\topsep}   
  {\itshape}  
  {0pt}       
  {\bfseries\sffamily} 
  {.}         
  {5pt plus 1pt minus 1pt} 
  {}          
  \newtheoremstyle{definition}
  {\topsep}   
  {\topsep}   
  {\normalfont}  
  {0pt}       
  {\bfseries\sffamily} 
  {.}         
  {5pt plus 1pt minus 1pt} 
  {}          
  \newtheoremstyle{remark}
  {0.5\topsep}   
  {0.5\topsep}   
  {\normalfont}  
  {0pt}       
  {\itshape} 
  {.}         
  {5pt plus 1pt minus 1pt} 
  {}          
\theoremstyle{plain}
	\newtheorem{theorem}{Theorem}[section]
	\newtheorem{lemma}[theorem]{Lemma}
	\newtheorem{corollary}[theorem]{Corollary}
\theoremstyle{definition}
	\newtheorem{defn}[theorem]{Definition}
\theoremstyle{remark}
	\newtheorem{remark}[theorem]{Remark}
	\newtheorem{example}[theorem]{Example}
\newcommand*{\N}{\mathbb{N}}		
\newcommand*{\Z}{\mathbb{Z}}		
\newcommand*{\R}{\mathbb{R}}		
\newcommand*{\C}{\mathbb{C}}		
\newcommand*{\Unitary}{\mathcal{U}} 
\newcommand*{\tensor}{\otimes}		
\DeclarePairedDelimiter{\scal}{\langle}{\rangle}	
\DeclarePairedDelimiter{\norm}{\lVert}{\rVert} 	
\DeclareMathOperator{\Span}{span}
\DeclareMathOperator{\id}{id}		
\DeclareMathOperator{\Ad}{Ad}		
\DeclareMathOperator{\ev}{\text{ev}}		
\DeclareMathOperator{\SU}{SU}
\DeclareMathOperator{\Aut}{Aut}
\newcommand{\cf}{\mbox{cf.}\xspace}				
\newcommand{\eg}{\mbox{e.\,g.}\xspace}			
\newcommand*{\etal}{\mbox{et.\,al.}\xspace}		
\newcommand*{\ie}{\mbox{i.\,e.}\xspace}			
\newcommand{\wrt}{\mbox{w.\,r.\,t.}\xspace}		
\newcommand*{\Star}{$^*$\nobreakdash}
\newcommand*{\hilb}{\mathfrak}		
\newcommand*{\alg}{\mathcal}		
\newcommand*{\hH}{\hilb H}			
\newcommand*{\one}{1}				
\newcommand*{\aA}{\alg{A}}			
\newcommand*{\aB}{\alg{B}} 			
\newcommand*{\DD}{\mathbb D}		
\newcommand*{\End}{\mathcal L}	
\newcommand*{\Com}{\mathcal K}	
\newcommand*{\Mul}{\mathcal M}
\newcommand*{\Cont}{C}
\newcommand*{\CL}{\C\ell}
\newcommand*{\spin}{\text{spin}}
\DeclareMathOperator{\dom}{dom} 				
\DeclareMathOperator{\Lie}{L}					
\DeclareMathOperator{\lin}{span}				
\DeclarePairedDelimiterX{\lprod}[2]{{\,\prescript{}{#1}{\langle}}}{\rangle}{#2}
\DeclarePairedDelimiterX{\rprod}[2]{\langle}{\rangle_{#1}}{#2}
\DeclarePairedDelimiterX{\ketbra}[2]{\lvert}{\rvert}{#1 \delimsize\rangle \delimsize\langle #2}
\newcommand*{\dt}{\tfrac{d}{dt}\Big|_{t=0}} 	
\DeclareMathOperator{\Irrep}{Irr} 			
\newcommand*{\acts}{\,.\,}
\newcommand{\doubleitem}{%
  \begingroup
  \stepcounter{enumi}%
  \edef\tmp{\theenumi,}%
  \stepcounter{enumi}
  \edef\tmp{\endgroup\noexpand\item[\tmp\labelenumi]}%
  \tmp}
 \newcommand{\tripleitem}{%
 \begingroup \stepcounter{enumi}%
 \edef\tmp{\theenumi,}%
 \stepcounter{enumi} 
 \edef\tmpt{\theenumi,} 
 \stepcounter{enumi} 
 \edef\tmp{\endgroup\noexpand\item[\tmp\tmpt\labelenumi]}%
 \tmp}
\begin{document}

\author{
	Kay Schwieger \thanks{
		iteratec GmbH, Stuttgart, 
		\href{mailto:kay.schwieger@gmail.com}{\nolinkurl{kay.schwieger@gmail.com}}
	} \and 
	Stefan Wagner \thanks{
		Blekinge Tekniska H\"ogskola,
		\href{mailto:stefan.wagner@bth.se}{\nolinkurl{stefan.wagner@bth.se}}
	}
}
\sloppy
\maketitle

\begin{abstract}
	\noindent
	Given a free action of a compact Lie group $G$ on a unital C\Star-algebra $\aA$ and a spectral triple on the corresponding fixed point algebra $\aA^G$, we present a systematic and in-depth construction of a spectral triple on $\aA$ that is build upon the geometry of $\aA^G$ and $G$. We compare our construction with a selection of established examples.

	\vspace*{0.3cm}

	\noindent
	MSC2010: 58B34, 46L87 (primary), 55R10 (secondary)
\end{abstract}

\listoffixmes

\section{Introduction}\label{sec:intro}

The tremendous work of Hopf, Stiefel, and Whitney in the 1930's demonstrated the importance of principal bundles for various applications to algebraic topology, geometry and mathematical physics. 
In the noncommutative setting the notion of a free action of a quantum group on a C\Star-algebra provides a natural framework for noncommutative principal bundles (see, \eg, \cite{BaCoHa15,Ell00,Phi87,SchWa17} and ref.~therein). 
Their structure theory and their relation to $K$-theory (see, \eg, \cite{Bre04,CoYa13a,DGH01,ForRen19,SchWa15,SchWa16,SchWa17} and ref.~therein) certainly appeal to operator algebraists and functional analysts. 
In addition, noncommutative principal bundles are becoming increasingly prevalent in various applications of geometry (\cf~\cite{IoMa16,Ivan17,Meir18,SchWa18}) and mathematical physics (see, \eg, \cite{BaHa-etal07,CacMes19,DaSi13,DaSiZu14,EchNeOy09,HaMa10,LaSu05,Wahl10} and ref.~therein). 

Geometric aspects of noncommutative principal bundles, however, have not been studied yet in a cohesive way. 
Using a universal differential calculus, the algebraic framework of Hopf-Galois extensions yields abstract notions of connections and curvature (see, \eg,~\cite{DGH01,gracia2000,LaSu05} and ref.~therein). 
In this article we take a different approach and investigate the geometry of principal bundles by means of Connes' spectral triples. 

Spectral triples lay the foundation for noncommutative geometric spaces. 
Along with some additional structure they allow to extend many techniques from Riemannian spin geometry (see, \eg,~\cite{Co94,Co08,gracia2000,Var06} and ref.~therein).  
Another reason for looking at spectral triples is the role they play in mathematical physics. 
For instance, they provide a noncommutative formulation of the standard model of particle physics and of the integrality of the quantum Hall current (\cf~\cite{ChaCo08,CoMa08b,McCa97}). 
Spectral triples and their applications in quantum field theory is also an area of active research (see, \eg,~\cite{AaGr17,CoMa08b} and ref.~therein).

The purpose of this paper is to give a methodical construction of spectral triples on noncommutative principal bundles that are build upon the geometry of the ``quantum base spaces'' and the underlying structure groups. 
Since the subject of noncommutative principal bundles or, equivalently, free actions is better understood in the compact case (see,~\eg,~\cite{BaCoHa15,CoYa13a,SchWa15,SchWa17} and ref.~therein), we will restrict our study to free actions of compact Lie groups on unital C\Star-algebras. 
More precisely, for a free action of a compact Lie group $G$ on a unital C\Star-algebra $\aA$ and a spectral triple on the corresponding fixed point algebra~$\aA^G$, we provide a systematic construction of a spectral triple on $\aA$ by means of the geometry of $\aA^G$ and $G$. 
To the best of our knowledge, such spectral triples on nontrivial noncommutative principal bundles with higher dimension non-Abelian structure groups have not been worked out yet.

\subsection*{Survey of the field}

Let us briefly summarize the current research frontiers covered by this article. 
Regarding the notation, we refer the reader to Section~\ref{sec:pre_not} below.

Let $\aB$ be a C\Star-algebra, let $\DD$ be a spectral triple on $\aB$, and let $\alpha$ be a \Star-automorphism of~$\aB$. 
Bellissard, Marcolli, and Reihani established in their seminal work~\cite{BeMaRe10} that there is a canonical spectral triple on the crossed product $\aB \rtimes_\alpha \mathbb{Z}$ that is built upon $\DD$ and a differential operator on the circle (via Fourier transform) and characterizes the metric properties of the C\Star-dynamical system $(\aB, \mathbb{Z}, \alpha)$. 
The Bellissard-Marcolli-Reihani theory was developed further in the articles~\cite{HaSkWhYa13,IoMa16,Pat14} for actions of more general classes of groups (\eg, discrete and second-countable locally compact). 
In this context we would like to recall that, given a C\Star-dynamical system $(\aA,G,\alpha)$ with a countable discrete Abelian group~$G$, the natural dual action of the compact dual group $\hat G$ on the crossed product $\aA \rtimes_\alpha G$ is free (\cf~\cite[Sec.~4]{SchWa16}). 
For this reason crossed products provide a natural source of noncommutative principal bundles. 

Ammann and B\"ar~\cite{Amm98,AmmBae98} looked into the properties of the Riemannian spin geometry of a smooth principal $U(1)$-bundle. 
Under suitable hypotheses they related the Riemannian spin geometry of the total space to the Riemannian spin geometry of the base space. 
A~noncommutative generalization of these results for the Hopf-Galois analogue of principal torus bundles was developed about a decade later by Dabrowski, Sitarz, and Zucca~\cite{DaSi13,DaSiZu14}. 
Of particular interest is also Zucca's unpublished PhD-thesis~\cite[Sec.~8]{Zucca}, in which he presented conditions in order to build a real spectral triple for cleft extensions of compact connected semisimple Lie groups. 
Moreover, Sitarz and Venselaar~\cite{SiVe15} studied spectral triples on quantum lens spaces as orbit spaces of free actions of cyclic groups on the spectral geometry of $\SU_q(2)$. 
Aiello, Guido, and Isola~\cite{AGI17} provided examples of noncommutative coverings and extended spectral triples on the base space to spectral triples on the inductive family of coverings in such a way that the covering projections are locally isometric.

We would also like to mention some connections to unbounded $KK$-theory, which was developed over the last decade by Kaad, Lesch, Mesland \etal 
Indeed, Gabriel and Grensing~\cite{GaGr13} proposed a construction of spectral triples for a class of crossed product-like algebras that gives an unbounded representative of the Kasparov product of the original spectral triple and the Pimsner-Toeplitz extension associated with the crossed product by a Hilbert module. 
Forsyth and Rennie~\cite{ForRen19} provided sufficient conditions to factorize an equivariant spectral triple as a Kasparov product of unbounded classes constructed from the compact Abelian group action on the algebra and from the fixed point spectral triple. 
In addition, they showed that each equivariant Dirac-type spectral triple on the total space of a torus principal bundle factorizes and that the Kasparov product~\cite{KaLesch13,Mes16} can be used to recover the original triple. 
Kaad and van Suijlekom~\cite{KaSui18a,KaSui20} established that the Dirac operator on the total space of an almost-regular fibration can be written, up to an explicit ``obstructing'' curvature term, as the tensor sum of a vertically elliptic family of Dirac operators with the horizontal Dirac operator representing the interior Kasparov product in bivariant $K$-theory (see~\cite{KaSui18b} for a similar discussion in the context of toric noncommutative manifolds). 

Finally, we would like to draw attention to a recent paper by \'{C}a\'{c}i\'{c} and Mesland~\cite{CacMes19}. There the authors presented a new, general approach to gauge theory on principal $G$-spectral triples, where $G$ is a compact connected Lie group. 
It is our hope that this work will contribute to the development and understanding of a noncommutative gauge theory, for instance, by providing a geometric oriented notion of ``parallel transport'' or, more generally, of ``parallelity'' on a noncommutative principal bundle - a concept whose importance cannot be overemphasized.


\subsection*{Organization of the article}

After some preliminaries, we present in Section~\ref{sec:freeness}, Lemma~\ref{lem:coisometry}, yet another characterization of freeness for C\Star-dynamical systems, which is well-adapted to our purposes. 
Given a free C\Star-dynamical system $(\aA,G,\alpha)$, we make use of this characterization in Section~\ref{sec:representation} to get a faithful covariant representation of $(\aA,G,\alpha)$ on some suitable amplification of its fixed point algebra $\aA^G$ (Lemma~\ref{lem:pi_S}). 
We also classify all faithful covariant representations of $(\aA,G,\alpha)$ up to unitary equivalence (Theorem~\ref{thm:class_cov_rep}). 
As a further application we show that the isotypic components of~$\aA$ act nondegenerately (Corollary~\ref{cor:Sunitary}).

Now, let $(\aA,G,\alpha)$ be a free C\Star-dynamical system with a compact Lie group $G$ and let $\DD$ be a spectral triple on $\aA^G$. 
Section~\ref{sec:permanence}, the main body of this article, is devoted to the construction of a spectral triple on $\aA$ in terms of $\DD$ and the geometry of $G$.
As a preliminary step we briefly recall the notion of a factor system. 
Our procedure then naturally splits into the following five main steps:
\begin{enumerate}
	\item
		We form a ``quantum manifold'' or, in greater detail, a dense unital \Star-subalgebra $\aA_0 \subseteq\aA$ of smooth elements (Theorem~\ref{thm:A_0}).
	\item
		We extend the faithful \Star-representation of $\aA^G$ to a faithful covariant representation of $(\aA,G,\alpha)$ (Theorem~\ref{thm:lift_cov_rep}). 
	\item
		We lift the Dirac operator on $\aA^G$ to a Dirac-type operator $D_h$ on $\aA$ (Corollary~\ref{cor:D_h}) and show that $D_h$ has, under some suitable conditions, bounded commutators with $\aA_0$ (Theorem~\ref{thm:D_hcomm}). The operator $D_h$ later becomes the horizontal component of a Dirac operator.
	\item
		Using the group action, we construct a Dirac operator $D_v$, which later becomes the vertical component of the Dirac operator (Corollary~\ref{cor:vertop}).
	\item
		Finally, it remains to tie everything together. More precisely, we suitably assemble the operators $D_h$ and $D_v$ to a Dirac operator $D_\aA$ on $\aA$ (Theorem~\ref{thm:main}), and in this way we get a spectral triple on $\aA$ (Corollary~\ref{cor:main}).
\end{enumerate}
 
The remaining Sections,~\ref{sec:crossprod},~\ref{sec:QT} and~\ref{sec:Homogeneous}, are devoted to thoroughly treating examples or, more precisely, to investigating how our construction compares to well-established examples. 
Indeed, in Section~\ref{sec:crossprod} we show that our construction generalizes the construction given in the pioneering article~\cite[Sec.~3.4]{BeMaRe10} by Bellissard, Marcolli, and Reihani.
In Section~\ref{sec:QT} and Section~\ref{sec:Homogeneous} we explore our construction for a free 2-torus action on the quantum 4-torus and for homogeneous spaces, respectively. 
The analysis in Section~\ref{sec:QT} is straightforward and we recover the standard Dirac operator on the quantum 4-torus. The computations in Section~\ref{sec:Homogeneous} are more technical and we find that our construction differs from the standard Dirac operator on the group by a central term (Theorem~\ref{thm:compare} and Remark~\ref{rem:compare}).

Last but not least, we would like to mention that we have put in enough detail so that this paper will be accessible to a broad readership.

\section{Preliminaries and notation}\label{sec:pre_not}

Our study revolves around permanence properties of noncommutative principal bundles with respect to spectral triples. Along the way we use various tools from geometry and operator algebras. This preliminary section exhibits the most fundamental definitions and notations in use.

To begin with, we provide some standard references. For a recent account of the theory of spectral triples and, more generally, of noncommutative geometry we refer to the excellent expositions~\cite{gracia2000,Var06} by Gracia-Bond{\'\i}a, Figueroa, and Varilly. Our standard references for operator algebras are the opuses~\cite{BB06,Ped18} by Blackadar and Pedersen, respectively. For a thorough treatment of Hilbert module structures we refer to the book~\cite{Rae98} by Raeburn and Williams and the memoirs~\cite{EKQR06} by Echterhoff \etal

\paragraph*{Hilbert spaces}
All Hilbert spaces are assumed to be complex and come equipped with an inner product that is linear in the second component if not explicitly mentioned otherwise.

\paragraph*{About tensor products}
In this paper tensor products of C\Star-algebras are taken with respect to the minimal tensor product, which is simply denoted by $\tensor$. 
Let $\aA$, $\aB$, and $\alg C$ be unital C\Star-algebras. If there is no ambiguity, then we consider each one of them as a C\Star-subalgebra of $\aA \tensor \aB \tensor \alg C$ and extend maps on $\aA$, $\aB$, or $\alg C$ canonically by tensoring with the respective identity map. 
For the sake of clarity, we may make use of the leg numbering notation, for instance, given $x \in \aA \tensor \alg C$, we write $x_{13}$ to denote the corresponding element in~$\aA \tensor \aB \tensor \alg C$.

\paragraph*{About groups}\label{sec:groups}
Let $G$ be a compact group. 
We write $\Irrep(G)$ for the set of equivalence classes of irreducible representations of $G$ and let $\one \in \Irrep(G)$ stand for the class of the trivial representation.  
There are two C\Star-algebras associated with $G$: the algebra $\Cont(G)$ of continuous complex-valued functions on $G$ and the (reduced) group C\Star-algebra $C^*_r(G)$.
If $G$ is a Lie group, then we denote its Lie algebra by $\Lie(G)$ and the algebra of smooth functions by $\Cont^\infty(G)$.
For each $X \in \Lie(G)$ we write $\partial_X^G$ for the skew-adjoint operator on $\Cont^\infty(G)$ given by
\begin{align*}
	\partial_X^G f(g) := \dt f \bigl( \exp(-tX) g).
\end{align*}

\paragraph*{About Clifford algebras}\label{sec:cliff}
For a real Hilbert space $\hH$ we write $\CL(\hH)$ for the associated complex Clifford algebra and use the symbol ``$\cdot$'' for its product. 
We consider $\hH$ as a linear subspace of $\CL(\hH)$ and follow the convention that the defining relation is
\begin{align*}
	\xi \cdot \eta + \eta \cdot \xi = -2 \rprod{\hH}{\xi, \eta} \, \one
\end{align*}
for all $\xi, \eta \in \hH$. 
In particular, if $G$ is a compact Lie group, then we simply write $\CL(G)$ instead of $\CL\bigl( (\Lie(G) \bigr)$ for ease of notation.

\paragraph*{About unbounded operators}\label{sec:unbounded}
By an unbounded operator on a Hilbert space $\hH$ we mean a linear map $D: \dom(D) \to \hH$ defined on a dense subspace $\dom(D) \subseteq \hH$. The following results about unbounded self-adjoint operators are well-known to experts, but we could not spot a prominent reference. The proofs are postponed to Appendix~\ref{sec:proofs}.

\pagebreak[3]
\begin{lemma}\label{lem:resD}
	Let $D$ be an unbounded self-adjoint operator on a Hilbert space $\hH$ with domain $\dom(D)$ and let $p \in \End(\hH)$ be an orthogonal projection such that $p \dom(D) \subseteq \dom(D)$ and such that the commutator $[D, p]$ is bounded. Consider the unbounded operator 
	\begin{equation*}
		D_p := p D p 
		\qquad
		\text{with domain}~\dom(D_p) := p \dom(D)
	\end{equation*}
	on the Hilbert space $p(\hH)$. Then the following assertions hold:
	\begin{enumerate}[label={\arabic*}.,ref=\ref{lem:resD}.{\textit{\arabic*}}.]
	\item
		$D_p$ is self-adjoint.\label{resDsa}
	\item 
		If $D$ has compact resolvent, then $D_p$ has compact resolvent.\label{resDcomres}
	\end{enumerate}
\end{lemma}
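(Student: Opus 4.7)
The plan is to handle the two assertions in order. For self-adjointness I verify symmetry and then show $\dom(D_p^*)\subseteq\dom(D_p)$, with the boundedness of $[D,p]$ doing the decisive work. For the compact resolvent I invoke the standard criterion that this is equivalent to compactness of the graph-norm inclusion $\dom(D_p)\hookrightarrow p(\hH)$, and reduce it to the known compactness of $\dom(D)\hookrightarrow\hH$.

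For part~(1), symmetry is immediate because $\xi=p\xi$ and $\eta=p\eta$ for $\xi,\eta\in p\dom(D)$, which turns $\langle pDp\xi,\eta\rangle$ into $\langle D\xi,\eta\rangle=\langle\xi,D\eta\rangle$. For the harder inclusion I would take $\eta\in\dom(D_p^*)$ with $D_p^*\eta=\zeta$, then decompose an arbitrary $\alpha\in\dom(D)$ as $p\alpha+(1-p)\alpha$; both summands lie in $\dom(D)$ thanks to $p\dom(D)\subseteq\dom(D)$. Testing $\langle D\alpha,\eta\rangle$: the $p\alpha$ piece yields $\langle p\alpha,\zeta\rangle$ directly by definition of $D_p^*$, while the $(1-p)\alpha$ piece is rewritten, using $\eta=p\eta$ and $p(1-p)=0$, as $\langle [p,D](1-p)\alpha,\eta\rangle$, and moved across via the Hilbert-space adjoint of the bounded operator $[p,D]$. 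Summing the two contributions expresses $\langle D\alpha,\eta\rangle$ as a bounded linear functional of $\alpha$, which shows $\eta\in\dom(D^*)=\dom(D)$, hence $\eta=p\eta\in p\dom(D)=\dom(D_p)$; a short check then identifies $D_p\eta$ with $\zeta$.

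For part~(2), applying the identity $pD=Dp-[D,p]$ on $\dom(D)$ to $\xi\in p\dom(D)$ (where $\xi=p\xi$) gives
\[
\|D\xi\|\;\leq\;\|D_p\xi\|+\|[D,p]\|\,\|\xi\|,
\]
so that the graph norms $\|\cdot\|_D$ and $\|\cdot\|_{D_p}$ are equivalent on $p\dom(D)$. The same identity makes $p$ a bounded idempotent on $(\dom(D),\|\cdot\|_D)$, so its range $p\dom(D)$ is closed there. Compactness of the resolvent of $D$ is equivalent to compactness of the inclusion $(\dom(D),\|\cdot\|_D)\hookrightarrow\hH$; restricting to the closed subspace $p\dom(D)$ and corestricting to $p(\hH)$ preserves compactness, and the norm equivalence then identifies the resulting map with $(\dom(D_p),\|\cdot\|_{D_p})\hookrightarrow p(\hH)$. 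Invoking the criterion once more, $D_p$ has compact resolvent.

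The main obstacle lies in the adjoint-domain step of part~(1): without the bounded commutator hypothesis, there is no mechanism to absorb the $\langle D(1-p)\alpha,\eta\rangle$ contribution, and self-adjointness can genuinely fail. Once that piece is routed through $[D,p]$, both assertions follow from the standard graph-norm machinery.
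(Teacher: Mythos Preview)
Your proof is correct. Part~(1) follows the same route as the paper: both arguments take an element of $\dom(D_p^*)$ and show it lies in $\dom(D^*)=\dom(D)$ by exhibiting $\eta\mapsto\langle\,\cdot\,,D\eta\rangle$ as a bounded functional, with the boundedness of $[D,p]$ absorbing the obstruction. The paper writes $\langle\xi,D\eta\rangle=\langle\xi,D_p(p\eta)\rangle-\langle\xi,[D,p]\eta\rangle$ directly using $\xi=p\xi$, whereas you split the test vector as $p\alpha+(1-p)\alpha$; these are dual versions of the same manipulation.

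Part~(2), however, is handled by a genuinely different mechanism. The paper introduces the auxiliary operator $\tilde D_p:=pDp+(1-p)D(1-p)$ on all of $\dom(D)$, rewrites it as $D+\bigl[[D,p],(1-p)\bigr]$, and then invokes a separate lemma stating that a bounded perturbation of an operator with compact resolvent again has compact resolvent (proved via the Neumann series for $(\lambda-D-x)^{-1}$). Since $\tilde D_p$ commutes with $p$, its compression to $p(\hH)$ is $D_p$ and inherits compact resolvent. Your argument bypasses the perturbation lemma entirely: you use the equivalence of compact resolvent with compactness of the graph-norm inclusion, show that $\|\cdot\|_D$ and $\|\cdot\|_{D_p}$ are equivalent on $p\dom(D)$, and that $p\dom(D)$ is graph-norm closed because $p$ acts as a bounded idempotent there. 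The paper's route is more hands-on with resolvents and needs the extra lemma; yours is a cleaner functional-analytic reduction but relies on the graph-norm criterion being available. Either approach works without difficulty here.
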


We also make use of the following construction of an essentially self-adjoint operator from a continuous unitary representation $u:G \to \alg \Unitary(\hH)$ of a finite-dimensional compact Lie group $G$. For this, let $\hH^\infty \subseteq \hH$ stand for the subspace of smooth vectors, which is $G$\nobreakdash-invariant and dense in $\hH$ due to~\cite{Gard47} (see also~\cite{Neeb10,Neeb17}). Equip $\Lie(G)$ with an $\Ad$-invariant inner product and let $\pi_\spin:\CL(G) \to \End(\hH_\spin)$ be a finite-dimensional \Star-representation of the Clifford algebra $\CL(G)$. Finally, for each $X \in \Lie(G)$ set $F_X := \pi_\spin(X)$ and denote by  $\partial_X u$ the unbounded operator on $\hH$ with domain $\dom(\partial_X u) := \hH^\infty$ defined as
\begin{align*}
	\partial_X u(\eta) := \lim_{t \to 0} \frac{u_{\exp(tX)}(\eta)-\eta}{t}.
\end{align*}
Then~\cite[Prop.~4.1]{GaGr16} establishes that for any orthonormal basis $X_1, \dots, X_n$ of $\Lie(G)$ we obtain an essentially self-adjoint operator with $\dom(D) := \hH^\infty \tensor \hH_\spin$ by putting
\begin{align*}
	D := \sum_{k=1}^n \partial_{X_k} u \tensor F_{X_k}.
\end{align*}

\begin{lemma}\label{lem:comres2}
	Suppose the unitary representation $u:G \to \alg \Unitary(\hH)$ has finite-dimensional multiplicity spaces. Then the closure of $D$ has compact resolvent.
\end{lemma}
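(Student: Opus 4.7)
The plan is to decompose the Hilbert space via Peter--Weyl and reduce the claim to a spectral estimate on each isotypic component. Let $\hH = \bigoplus_{\pi \in \Irrep(G)} \hH_\pi$ be the isotypic decomposition, and write each $\hH_\pi \cong V_\pi \tensor M_\pi$ where $V_\pi$ carries the irreducible representation and $M_\pi$ is the multiplicity space, which is finite-dimensional by hypothesis. On $\hH_\pi$ the smooth vectors coincide with $\hH_\pi$ itself, and each generator $\partial_{X_k} u$ acts as $d\pi(X_k) \tensor \id_{M_\pi}$. Consequently $D$ preserves the finite-dimensional subspace $\hH_\pi \tensor \hH_{\spin}$, and the operator $D$ (before taking closure) is a countable direct sum of finite-dimensional self-adjoint operators $D_\pi$. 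It therefore suffices to show that the eigenvalues of $D_\pi$ tend to $\infty$ in absolute value as $\pi$ varies, since then the closure of $D$ is automatically self-adjoint with discrete spectrum of finite total multiplicity below any bound.

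The key computation is for $D_\pi^2$. Using the Clifford relation $F_{X_k}F_{X_l} + F_{X_l}F_{X_k} = -2\delta_{kl}$ and the commutator $[\partial_{X_k}, \partial_{X_l}] u = \partial_{[X_k,X_l]} u$ applied inside the antisymmetric part, I would split
\begin{align*}
	D_\pi^2
	= -\sum_{k} d\pi(X_k)^2 \tensor \id \tensor \id
	+ \tfrac{1}{4}\sum_{k,l} d\pi([X_k,X_l]) \tensor \id \tensor [F_{X_k}, F_{X_l}].
\end{align*}
The first summand is $C_\pi \tensor \id \tensor \id$, where $C_\pi$ is the Casimir operator of $\pi$ with respect to the $\Ad$-invariant inner product on $\Lie(G)$, acting as a nonnegative scalar $c_\pi$.

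Next I would bound the remainder. Since $d\pi(X_k)$ is skew-adjoint, the relation $C_\pi = \sum_k d\pi(X_k)^* d\pi(X_k)$ yields $\|d\pi(X_k)\|^2 \leq c_\pi$ and, by linearity, $\|d\pi(Y)\| \leq C_G \sqrt{c_\pi}\, \|Y\|$ for some constant $C_G$ depending only on $G$. The Clifford commutators $[F_{X_k}, F_{X_l}]$ are uniformly bounded since $\pi_\spin$ is finite-dimensional. Hence the second summand has operator norm at most $K \sqrt{c_\pi}$ for some constant $K$ independent of $\pi$, and consequently
\begin{align*}
	D_\pi^2 \;\geq\; c_\pi - K\sqrt{c_\pi}
\end{align*}
as self-adjoint operators on $\hH_\pi \tensor \hH_\spin$.

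Finally, for a compact Lie group only finitely many $\pi \in \Irrep(G)$ have Casimir eigenvalue below any given bound (this is a standard consequence of Weyl's dimension formula and the finiteness of weight lattice points in any ball). Combined with the estimate $D_\pi^2 \geq c_\pi - K\sqrt{c_\pi}$, this shows that any bounded interval meets the spectrum of $D_\pi$ for only finitely many $\pi$, and on each such $\pi$ the space $\hH_\pi \tensor \hH_\spin$ is finite-dimensional. Hence the spectral projections of the closure $\bar D$ corresponding to bounded intervals are finite-rank, so $(\bar D \pm i)^{-1}$ is compact. The main technical step is the norm estimate on the remainder term, but it follows cleanly from the skew-adjointness of $d\pi(X_k)$ and the boundedness of the Clifford generators.
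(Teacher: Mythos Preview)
Your argument is correct, but the paper proceeds quite differently. Rather than computing $D_\pi^2$ via a Lichnerowicz-type formula and estimating the remainder against the Casimir, the paper invokes as a black box that the Dirac operator $D_G$ associated with the left regular representation $\lambda$ on $L^2(G)$ has compact resolvent (citing Friedrich and Rieffel). It then observes that under the Peter--Weyl decomposition both $D$ and $D_G$ are direct sums of the \emph{same} block operators $D_{\bar\sigma} = \sum_k \partial_{X_k}\bar\sigma \tensor F_{X_k}$ on $\bar V_\sigma \tensor \hH_\spin$, only tensored with different finite-dimensional multiplicity spaces ($\hH_\sigma$ for $D$, versus $V_\sigma$ for $D_G$). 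Hence $\mathrm{spec}(D) \subseteq \mathrm{spec}(D_G)$, and for each eigenvalue $\nu$ the eigenspace $E_\nu(D) = \bigoplus_\sigma \hH_\sigma \tensor E_\nu(\bar\sigma)$ is finite-dimensional because only finitely many $E_\nu(\bar\sigma)$ are nonzero. This transfer argument is shorter and needs no estimates, but it outsources the analytic content entirely to the cited result for $D_G$. Your direct approach is more self-contained and yields the explicit quantitative bound $D_\pi^2 \ge c_\pi - K\sqrt{c_\pi}$, which may be of independent interest; its one external input---that only finitely many $\pi \in \Irrep(G)$ have Casimir eigenvalue below a given bound---is of course equivalent in strength to the compact resolvent of the Laplacian on $G$, so ultimately both proofs rest on the same classical fact.
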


\paragraph*{About spectral triples}
{\fussy Let $\aA$ be a unital C\Star-algebra. By a \emph{spectral triple} on $\aA$ we mean a quadruple $\DD_\aA = (\aA_0,\pi,\hH,D)$ consisting of a dense unital \Star-subalgebra $\aA_0 \subseteq \aA$, a~faithful \Star-representation $\pi: \aA \to \mathcal L(\hH)$ on a Hilbert space $\hH$, and a possibly unbounded self-adjoint operator $D$ on $\hH$ such that}
\begin{enumerate}
	\item
		$D$ has compact resolvent,
	\item
		$\pi(\aA_0) \dom(D) \subseteq \dom(D)$,
	\item
		and all commutators $[D,\pi(x)]$ for $x \in \aA_0$ are bounded. 
\end{enumerate}

\paragraph*{About \texorpdfstring{C$^*$}{C*}-dynamical systems}\label{sec:Cstar}
Let $\aA$ be a unital C\Star-algebra and $G$ a compact group that acts on $\aA$ by \Star-automorphisms $\alpha_g:\aA \to \aA$, $g \in G$, such that $G \times \aA \to \aA$, $(g,x) \mapsto \alpha_g(x)$ is continuous. Throughout this article we call such data a \emph{C\Star-dynamical system}, denote it briefly by $(\aA,G,\alpha)$, and typically write $\aB := \aA^G$ for its fixed point algebra. In case $G$ is a compact Lie group, we make use of the symbol $\aA^\infty \subseteq \aA$ to denote the \Star-subalgebra of smooth elements.
	
For $\sigma \in \Irrep(G)$ we let $A(\sigma)$ stand for the corresponding isotypic component of $\aA$ and regard it as a correspondence over $\aB$ in terms of the usual left and right multiplication and the right $\aB$-valued inner product given by $\rprod{\aB}{x,y} := \int_G \alpha_g(x^*y) \, dg$ for all $x,y \in A(\sigma)$. On account of the Peter-Weyl Theorem (see, \eg, \cite[Thm.~4.22]{HoMo06}) $\aA$ decomposes into its isotypic components, which amounts to saying that the algebraic direct sum $\bigoplus_{\sigma\in\Irrep(G)}^{\text{alg}} A(\sigma)$ is a dense \Star-subalgebra of~$\aA$.
	
Aside from the isotypic components, it is expedient to look at the corresponding multiplicity spaces, by which we mean the sets
\begin{align*}
	\Gamma_\aA(\sigma) := \{ x \in \aA \tensor V_\sigma : (\alpha_g \tensor \sigma_g)(x) = x \quad \forall g\in G\}, \qquad \sigma \in \Irrep(G).
\end{align*}

They also play the role of modules of sections of a vector bundle (with standard fibre $V_\sigma$) associated with $(\aA,G,\alpha)$.
Each $\Gamma_\aA(\sigma)$, $\sigma \in \Irrep(G)$, is naturally a correspondence over $\aB$ with respect to the canonical left and right action and the restriction of the right $\aA$-valued inner product on $\aA \tensor V_\sigma$ determined by $\rprod{\aA}{a \tensor v, b \tensor w} := \scal{v,w} a^*b$ for all $a,b \in \aA$ and $v,w \in V_\sigma$. 
The corresponding mapping $\sigma \mapsto \Gamma_\aA(\sigma)$ can be extended to an additive functor from the representation category of $G$ into the category of C\Star-correspondences over~$\aB$. 
Most notably, for each $\sigma \in \Irrep(G)$ we get a $G$-equivariant isomorphism
\begin{equation}\label{eq:iso_section}
	\Phi_\sigma : \Gamma_\aA(\sigma) \tensor \bar{V}_\sigma \to A(\bar{\sigma})
\end{equation}
of correspondences over $\aB$, where the $G$-action on $\Gamma_\aA(\sigma) \tensor \bar{V}_\sigma$ is the obvious one on the second tensor factor, by restricting the map 
\begin{align*}
	\aA \tensor V_\sigma \tensor \bar{V}_\sigma \to \aA, \qquad a \tensor v \tensor \bar{w} \mapsto \scal{w,v} \, a.
\end{align*}
More generally, if $(\sigma,V_\sigma)$ is a finite-dimensional representation of $G$ and $\sigma = \sigma_1 \oplus \dots \oplus \sigma_n$ is a decomposition into irreducible subrepresentations, then the same recipe as above gives a $G$-equivariant morphism $\Phi_\sigma : \Gamma_\aA(\sigma) \tensor \bar{V}_\sigma \to A(\bar{\sigma})$ of correspondences over $\aB$, where $A(\bar{\sigma}) := \bigoplus_{i = 1}^n A(\bar{\sigma_i}) \subseteq \aA$. 
The functor $\sigma \mapsto \Gamma_\aA(\sigma)$ together with the family of transformations
\begin{equation}
	\Gamma_\aA(\sigma) \tensor_\aB \Gamma_\aA(\tau) \to \Gamma_\aA(\sigma \tensor \tau), \qquad x \tensor y \mapsto x_{12} y_{13}\label{eq:gammamulti}
\end{equation}
for all finite-dimensional representations $\sigma$ and $\tau$ of $G$ constitute a so-called weak tensor functor and allows to reconstruct the reduced form of the C\Star-dynamical system $(\aA,G,\alpha)$ up to isomorphism (see~\cite[Sec.~2]{Ne13}).

\section{Free \texorpdfstring{C$^*$}{C*}-dynamical systems}\label{sec:freeness}

A C\Star-dynamical system $(\aA,G,\alpha)$ is called \emph{free} if the so-called \emph{Ellwood map} 
\begin{align*}
	\Phi: \aA \tensor_{\text{alg}} \aA \rightarrow C(G,\aA), \qquad \Phi(x\tensor y)(g):=x \alpha_g(y)
\end{align*}
has dense range with respect to the canonical C\Star-norm on $\Cont(G,\aA)$. This condition was originally introduced for actions of quantum groups on C\Star-algebras by Ellwood~\cite{Ell00} and is known to be equivalent to Rieffel's saturatedness~\cite{Rieffel91} and the Peter-Weyl-Galois condition~\cite{BaCoHa15}. Additionally, we recall that Phillips~\cite{Phi09} considered some stronger variants of freeness.

One of the key tools used in this article is a characterization of freeness that we provided in \cite[Lem.~3.2]{SchWa17}, namely that a C\Star-dynamical system $(\aA,G,\alpha)$ is free if and only if for each irreducible representation $(\sigma,V_\sigma)$ of $G$ there is a finite-dimensional Hilbert space~$\hH_\sigma$ and an isometry $s(\sigma) \in \aA \tensor \End(V_\sigma,\hH_\sigma)$ satisfying $\alpha_g\bigl(s(\sigma)\bigr)=s(\sigma) (\one_\aA \tensor \sigma_g)$ for all $g \in G$. However, to simplify notation we patch this family of isometries together and use the following characterization instead. 

\begin{lemma}\label{lem:coisometry}
	For a C\Star-dynamical system $(\aA, G, \alpha)$ the following statements are equivalent:
	\begin{equivalence}
		\item
			$(\aA, G, \alpha)$ is free.
		\item
			There is a unitary representation $\mu: G \to \mathcal U(\hH)$ with finite-dimensional multiplicity spaces and, given any faithful covariant representation $(\pi,u)$ of $(\aA,G,\alpha)$ on some Hilbert space $\hH_\aA$, an isometry $s \in \End(\hH_\aA \tensor L^2(G), \hH_\aA \tensor \hH)$ satisfying
			\begin{alignat}{2}
				s \aA \tensor \Com(L^2(G)) &\subseteq \aA \tensor \Com(L^2(G),\hH),\label{eq:SOPleft}
				\\
				(u_g \tensor \one_\hH) s &= s (u_g  \tensor r_g) \qquad &&\forall g \in G,\label{eq:SOPequivariance}
				\\
				(\one_\aA \tensor \mu_g) s &= s (\one_\aA \tensor \lambda_g) \qquad &&\forall g \in G.\label{eq:SOPcommuting}
			\end{alignat}
Here, we do not distinguish between $\aA$ and $\pi(\aA) \subseteq \End(\hH_\aA)$ for the sake of brevity, and the tensor product $\aA \tensor \Com(L^2(G),\hH)$ is closed with respect to the operator norm, where $\Com(L^2(G),\hH)$ is regarded as the respective corner of $\Com(L^2(G) \oplus \hH)$.
	\end{equivalence}
\end{lemma}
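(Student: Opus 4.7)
The strategy is to reduce the claim to the pointwise characterization of freeness from~\cite[Lem.~3.2]{SchWa17}: $(\aA, G, \alpha)$ is free if and only if for every $\sigma \in \Irrep(G)$ there exist a finite-dimensional Hilbert space $\hH_\sigma$ and an isometry $s(\sigma) \in \aA \tensor \End(V_\sigma, \hH_\sigma)$ satisfying $\alpha_g(s(\sigma)) = s(\sigma)(\one \tensor \sigma_g)$ for all $g \in G$. I would then show that such a family assembles into a single isometry obeying~(\ref{eq:SOPleft})--(\ref{eq:SOPcommuting}), and conversely that any such isometry disassembles into such a family.

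For (a) $\Rightarrow$ (b), starting from the family $\{s(\sigma)\}_\sigma$ furnished by~\cite[Lem.~3.2]{SchWa17}, I would choose
\begin{align*}
	\hH := \bigoplus_{\sigma \in \Irrep(G)} V_\sigma \tensor \hH_{\bar\sigma}, \qquad \mu_g := \bigoplus_\sigma \sigma_g \tensor \one_{\hH_{\bar\sigma}},
\end{align*}
so that the multiplicity space of $\sigma$ in $\mu$ is $\hH_{\bar\sigma}$, which is finite-dimensional by hypothesis. Invoking the Peter-Weyl decomposition $L^2(G) = \bigoplus_\sigma V_\sigma \tensor \bar V_\sigma$, on which $\lambda_g$ acts as $\sigma_g \tensor \one$ and $r_g$ as $\one \tensor \bar\sigma_g$, I would define $s$ block-diagonally on the $\sigma$-summand as
\begin{align*}
	s|_\sigma := \tau_{23} \bigl(s(\bar\sigma) \tensor \one_{V_\sigma}\bigr) \tau_{23},
\end{align*}
where $\tau_{23}$ swaps the middle two tensor factors and $s(\bar\sigma)$ is read as an operator $\hH_\aA \tensor \bar V_\sigma \to \hH_\aA \tensor \hH_{\bar\sigma}$ via the covariant representation. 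A short chase through $\tau_{23}$ that uses only the equivariance of $s(\bar\sigma)$ then converts the latter into both~(\ref{eq:SOPequivariance}) and~(\ref{eq:SOPcommuting}), and $s^*s = \one$ reduces to $s(\bar\sigma)^* s(\bar\sigma) = \one$ together with the orthogonality of the isotypic summands.

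For the converse, given $s$ satisfying~(\ref{eq:SOPleft})--(\ref{eq:SOPcommuting}), condition~(\ref{eq:SOPcommuting}) forces $s$ to send each $\lambda$-isotypic summand of $\hH_\aA \tensor L^2(G)$ into the corresponding $\mu$-isotypic summand of $\hH_\aA \tensor \hH$. On the $\sigma$-summand, Schur's lemma lets one peel off the shared copy of $V_\sigma$, leaving for each $\sigma$ an element $s(\sigma) \in \aA \tensor \End(V_\sigma, \hH_\sigma)$ with $\hH_\sigma$ the finite-dimensional $\mu$-multiplicity space of $\sigma$. Condition~(\ref{eq:SOPequivariance}) then specializes to $\alpha_g(s(\sigma)) = s(\sigma)(\one \tensor \sigma_g)$, whence~\cite[Lem.~3.2]{SchWa17} delivers the freeness of $(\aA, G, \alpha)$.

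I expect the main technical obstacle to be condition~(\ref{eq:SOPleft}). Although each block $s|_\sigma$ lies in the finite-dimensional matrix algebra $\aA \tensor \End(V_\sigma \tensor \bar V_\sigma,\, V_\sigma \tensor \hH_{\bar\sigma})$, the assembled $s$ is only a strong direct sum, so in the forward direction one must verify that for $k \in \Com(L^2(G))$ the product $s(a \tensor k)$ is a \emph{norm} limit of partial sums in $\aA \tensor \Com(L^2(G), \hH)$; this should boil down to $k P_F \to k$ in operator norm along the net of projections $P_F$ onto finite unions of isotypic components, which holds because $k$ is compact. In the reverse direction the same condition is what guarantees that the extracted $s(\sigma)$ sits in the spatial tensor product rather than merely in an ambient multiplier algebra.
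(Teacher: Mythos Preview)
Your proposal is correct and follows essentially the same route as the paper: reduce to the pointwise isometries $s(\sigma)$ of \cite[Lem.~3.2]{SchWa17}, assemble them block-diagonally over the Peter--Weyl decomposition for $(a)\Rightarrow(b)$, and disassemble $s$ via its intertwining property for $(b)\Rightarrow(a)$, using~\eqref{eq:SOPleft} with the finite-rank isotypic projections to land in $\aA\tensor\End(V_\sigma,\hH_\sigma)$.

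The only noteworthy divergence is bookkeeping. The paper fixes the Peter--Weyl identification so that $\lambda_g=\bigoplus_\sigma \one_{V_\sigma}\tensor\bar\sigma_g$ and $r_g=\bigoplus_\sigma \sigma_g\tensor\one_{\bar V_\sigma}$; with this choice one may take $\hH=\bigoplus_\sigma \hH_\sigma\tensor\bar V_\sigma$, $\mu_g=\bigoplus_\sigma \one_{\hH_\sigma}\tensor\bar\sigma_g$, and simply set $s=\bigoplus_\sigma s(\sigma)\tensor\one_{\bar V_\sigma}$, so that~\eqref{eq:SOPequivariance} and~\eqref{eq:SOPcommuting} are immediate and no leg swaps are needed. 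Your opposite convention ($\lambda_g=\sigma_g\tensor\one$, $r_g=\one\tensor\bar\sigma_g$) forces the $\tau_{23}$ conjugations and the indexing by $\bar\sigma$; this is harmless but avoidable. For~\eqref{eq:SOPleft} the paper argues slightly differently: rather than your compactness argument $kP_F\to k$, it observes that for $T$ with range in a single isotypic block $V_\sigma\tensor\bar V_\sigma$ one has $s(a\tensor T)=(s(\sigma)\tensor\one_{\bar V_\sigma})(a\tensor T)\in\aA\tensor\Com(L^2(G),\hH)$, and such $a\tensor T$ are total. Both arguments are fine.
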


As we will mainly be concerned with the implication ``$(a) \Rightarrow (b)$'', we have decided to give the proof of this implication at this point only and to move the proof of the implication ``$(b) \Rightarrow (a)$'' to Appendix~\ref{sec:proofs}.

\begin{proof}
	If the C\Star-dynamical system $(\aA,G,\alpha)$ is free, then for each $\sigma \in \Irrep(G)$ there is a finite-dimensional Hilbert space $\hH_\sigma$ and an isometry $s(\sigma) \in \aA \tensor \End(V_\sigma,\hH_\sigma)$ satisfying $\alpha_g\bigl(s(\sigma)\bigr)=s(\sigma) (\one_\aA \tensor \sigma_g)$ for all $g \in G$ (\cf \cite[Lem.~3.2]{SchWa17}). To establish the claims in~(b), we consider the unitary representation $\mu: G \to \mathcal U(\hH)$ defined by
	\begin{align}\label{eq:decompH}
		\hH &:= \bigoplus_{\sigma \in \Irrep(G)} \hH_\sigma \tensor \bar V_\sigma
		&
		&\text{and}
		&
		\mu_g &: = \bigoplus_{\sigma \in \Irrep(G)} \one_{\hH_\sigma} \tensor \bar \sigma_g.
	\end{align}
	Additionally, we choose a faithful covariant representation $(\pi,u)$ of $(\aA,G,\alpha)$ on some Hilbert space $\hH_\aA$ and decompose $L^2(G) = \bigoplus_{\sigma \in \Irrep(G)} V_\sigma \tensor \bar V_\sigma$ into its isotypic components such that the left and right regular representation read as $\lambda_g = \bigoplus_{\sigma \in \Irrep(G)} \one_{V_\sigma} \tensor \bar\sigma_g$ and $r_g = \bigoplus_{\sigma \in \Irrep(G)} \sigma_g \tensor \one_{\bar V_\sigma}$ for all $g \in G$, respectively. We also patch together the isometries:
	\begin{equation} \label{eq:S}
		s := \bigoplus_{\sigma \in \Irrep(G)} s(\sigma) \tensor \one_{\bar V_\sigma} \in \End(\hH_\aA \tensor L^2(G), \hH_\aA \tensor \hH).
	\end{equation}
	By construction, $\mu: G \to \mathcal U(\hH)$ has finite-dimensional multiplicity spaces. Furthermore, straightforward computations reveal that
	\begin{align*}
		s^*s
		&= \bigoplus_{\sigma \in \Irrep(G)} s(\sigma)^* s(\sigma) \tensor \one_{\bar V_\sigma} 
		=  \one_\aA \tensor \one_G,
		\\
		(u_g \tensor \one_\hH) s (u_g^* \tensor r_g^*)
		&= \bigoplus_{\sigma \in \Irrep(G)} \alpha_g\bigl(s(\sigma)\bigr) (\one_\aA \tensor \sigma_g^*) \tensor \one_{\bar V_\sigma} = s \qquad \forall g \in G,
		\\
		\mu_g s
		&= \bigoplus_{\sigma \in \Irrep(G)} s(\sigma) \tensor \bar \sigma_g = s \lambda_g \qquad \forall g \in G.
	\end{align*}
	In other words, $s$ is an isometry satisfying the Equations~\eqref{eq:SOPequivariance} and~\eqref{eq:SOPcommuting}. It therefore remains to deal with Equation~\eqref{eq:SOPleft}. Indeed, given $a\in \aA$ and an operator $T$ on $L^2(G)$ such that $T \bigl( L^2(G) \bigr) \subseteq V_\sigma \tensor \bar V_\sigma$ for some $\sigma \in \Irrep(G)$, it is easily seen that $a \tensor T \in \aA \tensor \Com \bigl( L^2(G) \bigr)$ has the following property:
	\begin{equation*}
		s (a \tensor T) = (s(\sigma) \tensor \one_{\bar V_\sigma}) (a \tensor T) \in \aA \tensor \Com(L^2(G),\hH_\sigma \tensor \bar V_\sigma) \subseteq \aA \tensor \Com(L^2(G),\hH). 
	\end{equation*}
	But from the above we also obtain Equation~\eqref{eq:SOPleft}, because the set of such operators is total in $\aA \tensor \Com \bigl( L^2(G) \bigr)$. This completes the proof of the implication ``$(a) \Rightarrow (b)$''. 
\end{proof}

For the trivial representation of $G$ we may without loss of generality choose \mbox{$\hH_1 = \mathbb{C}$} and $s(1) = 1_\aA$. 
If this holds, then we refer to the isometry $s$ in Equation~\eqref{eq:S} as \emph{normalized} and notice that the projection $ss^*$ acts trivially on $\hH_\aA \tensor \hH_1 = \hH_\aA \tensor \C$.

\begin{remark}\label{rem:SOPright}
	In much the same way as in the proof of Lemma~\ref{lem:coisometry} we see that the adjoint $s^* \in \End \bigl( \hH_\aA \tensor \hH, \hH_\aA \tensor L^2(G) \bigr)$ of $s$ satisfies $s^* \aA \tensor \Com(\hH) \subseteq \aA \tensor \Com \bigl( \hH,L^2(G) \bigr)$ or, equivalently, that the isometry $s$ additionally satisfies
	\begin{equation*}
		\aA \tensor \Com(\hH) s \subseteq \aA \tensor \Com(L^2(G),\hH).
	\end{equation*}
	For this reason, we can assert that $s$ is a multiplier for $\aA \tensor \Com(L^2(G) \oplus \hH)$, that is, $s \in \Mul \bigl( \aA \tensor \Com( L^2(G) \oplus \hH) \bigr)$, with $(1_\aA \tensor p_\hH) s = 0 = s (1_\aA \tensor p_{L^2(G)})$, where $p_\hH$ and $p_{L^2(G)}$ denote the canonical projections onto $\hH$ and $L^2(G)$, respectively.
\end{remark}

\begin{remark}\label{rem:partial_iso}
	Let $s'(\sigma) \in \aA \tensor \End(V_\sigma,\hH'_\sigma)$, $\sigma \in \Irrep(G)$, be another family of isometries such that $\alpha_g(s'(\sigma))=s'(\sigma) (\one_\aA \tensor \sigma_g)$ for all $g \in G$. Moreover, let $\mu': G \to \mathcal U(\hH')$ and $s'$ be the corresponding unitary representation and isometry, respectively. Then there is a partial isometry $t: \hH_\aA \tensor \hH \to \hH_\aA \tensor \hH'$ satisfying $(\one_\aA \tensor \mu'_g) t = t (\one_\aA \tensor \mu_g)$ for all $g \in G$ as well as $s'(s')^* = tt^*$ and $ss^* = t^*t$ (\cf \cite[Lem.~4.3]{SchWa17}). In particular, the projections $s'(s')^*$ and $ss^*$ are Murray-von Neumann equivalent. This will be relevant later on for our attempt to characterize the faithful covariant representations of free C\Star-dynamical systems.
\end{remark}

\begin{remark}\label{rem:cleft}
	A rich class of free actions is given by so-called cleft actions (see~\cite{SchWa16}), which we now briefly recall. We say that a C\Star-dynamical system $(\aA,G,\alpha)$ is \emph{cleft} if there is a unitary $u \in \Mul \bigl( \aA \tensor C^*_r(G) \bigr)$ satisfying
	\begin{equation}
		\bar{\alpha}_g(u) = u (1_\aA \tensor r_g) \qquad \forall g \in G,\label{eq:Uequivariance}
	\end{equation}
where $\bar{\alpha}_g$ denotes the strictly continuous extension of $\alpha_g \tensor \id_{C^*_r(G)}$ to $\Mul \bigl( \aA \tensor C^*_r(G) \bigr)$. It is clear that each cleft C\Star-dynamical system is free with a possible choice for $\mu$ and $\hH$ given by $\lambda$ and $L^2(G)$, respectively. Regarded as noncommutative principal bundles, these actions are essentially characterized by the fact that all associated noncommutative vector bundles are trivial. 
\end{remark}

\section{Representations of free \texorpdfstring{C$^*$}{C*}-dynamical systems}\label{sec:representation}

In this section we look more closely into covariant representations of free C\Star-dynamical systems. 
For this purpose we fix a free C\Star-dynamical system $(\aA, G, \alpha)$ and let $\aB$ be its fixed point algebra. 

Our first goal is to characterize all faithful covariant representations of $(\aA, G, \alpha)$ up to unitary equivalence. 
For a start we consider a faithful covariant representation $(\pi,u)$ of $(\aA, G, \alpha)$ with representation space $\hH_\aA$. 
In accordance with Lemma~\ref{lem:coisometry} we choose a unitary representation $\mu: G \to \mathcal U(\hH)$ with finite-dimensional multiplicity spaces as well as an isometry $s \in \End(\hH_\aA \tensor L^2(G), \hH_\aA \tensor \hH)$ satisfying the Equations~\eqref{eq:SOPleft},~\eqref{eq:SOPequivariance}, and~\eqref{eq:SOPcommuting}. 
We assume that $s$ is normalized. 
Then $ss^*$ acts trivially on $\hH_\aA \tensor \hH_1 = \hH_\aA \tensor \C$ and, by Equation~\eqref{eq:SOPcommuting}, we have $(\one_\aA \tensor \mu_g) ss^* =  ss^* (\one_\aA \tensor \mu_g)$ for all $g \in G$. Furthermore, Equation~\eqref{eq:SOPequivariance} implies that $ss^*$ may be restricted to a projection on $\hH_\aA(\one) \tensor \hH$, where $\hH_\aA(\one)$ denotes the trivial isotypic component of $\hH_\aA$. We shall use the letter $p$ for this projection in~$\End \bigl( \hH_\aA(1) \tensor \hH \bigr)$.
	
Next, let us take into account the isometry
\begin{equation}\label{eq:J_u}
	J_u: \hH_\aA \to \hH_\aA \tensor L^2(G) = L^2(G, \hH_\aA),
	\qquad
	(J_u \eta)(g) := u_g^* \eta.
\end{equation}
We write $\hilb K_\aA$ for its range, which is the fixed point space of $\hH_\aA \tensor L^2(G)$ under the action $u_g \tensor r_g$, $g\in G$. 
Moreover, we denote by $j_\alpha$ the \Star-homomorphism
\begin{gather*}
	j_\alpha : \aA \rightarrow \Cont\bigl(G,\pi(\aA)\bigr) \subseteq \End \bigl( \hH_\aA \tensor L^2(G) \bigr), 
	\\
	j_\alpha(x)(g) = \pi\bigl( \alpha_{g^{-1}}(x) \bigr) = u_g^* \pi(x) u_g. \notag
\end{gather*}
It is evident that $j_\alpha$ is injective and that $\Ad[\one_\aA \tensor \lambda_g] \circ j_\alpha = j_\alpha \circ \alpha_g$ for all $g \in G$.

\begin{lemma}\label{lem:pi_S}
	Consider the Hilbert space $\hH_s := ss^*(\hH_\aA \tensor \hH)$ along with the map
	\begin{align}\label{eq:pi_S}
		\pi_s : \aA \rightarrow \End(\hH_s), \qquad \pi_s(x) := s j_\alpha(x) s^*.
	\end{align}
	Then $\pi_s$ is a faithful \Star-representation. Additionally, $\pi_s$ is equivariant \wrt the action $\alpha_g$, $g \in G$, on $\aA$ and the action $\Ad[\one_\aA \tensor \mu_g]$, $g\in G$, on $\End(\hH_s)$, respectively.
\end{lemma}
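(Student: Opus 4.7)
The plan is to exploit three structural properties of $s$ already established in (or immediately following) the proof of Lemma~\ref{lem:coisometry}: the isometry relation $s^*s = \one_{\hH_\aA}\tensor \one_{L^2(G)}$, the fact that $ss^*$ is the orthogonal projection onto $\hH_s$ (so that $s$ restricts to a unitary from $\hH_\aA\tensor L^2(G)$ onto $\hH_s$), and the intertwining relation $(\one_\aA\tensor \mu_g)s = s(\one_\aA\tensor \lambda_g)$ from \eqref{eq:SOPcommuting}. Together with the fact that $j_\alpha$ is an injective $\Ad[\one_\aA\tensor \lambda_g]$-equivariant $\ast$-homomorphism, the three claims (range lies in $\End(\hH_s)$, $\ast$-representation, faithfulness, equivariance) become short formal computations.

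First I would observe that $\pi_s(x)$ genuinely lands in $\End(\hH_s)$: on the orthogonal complement of $\hH_s$ the factor $s^*$ vanishes, and the image of $\pi_s(x)$ is contained in the range of $s$, which is $\hH_s$. Linearity and $\ast$-preservation are immediate from the corresponding properties of $j_\alpha$ together with $(s^*)^* = s$. For multiplicativity I would write
\[
\pi_s(x)\pi_s(y) = s\, j_\alpha(x)\, s^*s\, j_\alpha(y)\, s^* = s\, j_\alpha(x)j_\alpha(y)\, s^* = s\, j_\alpha(xy)\, s^* = \pi_s(xy),
\]
which is the one place where the isometry relation $s^*s = \one$ enters. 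Faithfulness follows by the same trick in reverse: if $\pi_s(x) = 0$, then
\[
j_\alpha(x) = s^*s\, j_\alpha(x)\, s^*s = s^*\pi_s(x)s = 0,
\]
so injectivity of $j_\alpha$ forces $x = 0$.

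For equivariance, I would use \eqref{eq:SOPcommuting} in the form $(\one_\aA\tensor \mu_g)s = s(\one_\aA\tensor \lambda_g)$ and its adjoint $s^*(\one_\aA\tensor \mu_g^*) = (\one_\aA\tensor \lambda_g^*)s^*$ to push the $\mu_g$'s through $s$ and $s^*$, reducing the computation to the already-known $\Ad[\one_\aA\tensor \lambda_g]\circ j_\alpha = j_\alpha\circ \alpha_g$. Explicitly,
\[
\Ad[\one_\aA\tensor \mu_g]\bigl(\pi_s(x)\bigr) = s\,\Ad[\one_\aA\tensor \lambda_g]\bigl(j_\alpha(x)\bigr)\, s^* = s\, j_\alpha(\alpha_g(x))\, s^* = \pi_s(\alpha_g(x)).
\]

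I do not foresee a substantial obstacle; the lemma is essentially a bookkeeping consequence of the characterization in Lemma~\ref{lem:coisometry}. The only mildly subtle point is that $\hH_s$ must be treated consistently as both a subspace of $\hH_\aA\tensor \hH$ (on which $\pi_s$ acts) and as the image of the unitary $s\colon \hH_\aA\tensor L^2(G)\to \hH_s$ (which is what makes the cancellation $s^*s = \one$ and the intertwining argument work); making this identification explicit at the outset keeps the computation transparent.
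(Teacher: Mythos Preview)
Your proposal is correct and follows essentially the same approach as the paper: the paper's proof is a terse two-sentence version invoking the isometry property of $s$ and injectivity of $j_\alpha$ for the faithful $\ast$-representation part, and then the same one-line computation you give (using \eqref{eq:SOPcommuting} and $\Ad[\one_\aA\tensor\lambda_g]\circ j_\alpha = j_\alpha\circ\alpha_g$) for equivariance. You have simply unpacked the ``easily infer'' into the explicit cancellations $s^*s=\one$.
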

\begin{proof}
	Using the isometry property of $s$ and the injectivity of $j_\alpha$, we easily infer that $\pi_s$ is a well-defined faithful \Star-representation. Next, let $g \in G$ and let $x\in \aA$. Then
	\begin{equation*}
		\Ad[\one_\aA \tensor \mu_g] \bigl(\pi_s(x) \bigr) \overset{\eqref{eq:SOPcommuting}}= s \bigl( \Ad[\one_\aA \tensor \lambda_g]( j_\alpha(x) \bigr) s^* = \pi_s \bigl( \alpha_g(x) \bigr), 
	\end{equation*}
which proves that $\pi_s$ is also $G$-equivariant.
\end{proof}

Because $ss^*$ lies in  $\Mul \bigl( \aB \tensor \Com(\hH) \bigr)$ (see~Remark~\ref{rem:SOPright}) and the compact operators form an ideal, we instantly get the following statement.

\begin{corollary}\label{cor:pi_s}
	$\pi_s(\aA)$ is a subset of the multiplicator algebra $\Mul \bigl( ss^* \bigl( \aB \tensor \Com(\hH) \bigr) ss^* \bigr)$.
\end{corollary}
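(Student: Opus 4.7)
The plan is to verify the two defining conditions of the multiplier algebra directly: for every $x \in \aA$ and $k \in \aB \tensor \Com(\hH)$, both $\pi_s(x) \cdot ss^* k ss^*$ and $ss^* k ss^* \cdot \pi_s(x)$ should lie in $ss^*(\aB \tensor \Com(\hH)) ss^*$. Since both sides of the desired inclusion are $*$-closed, it is enough to handle left multiplication.

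The first step is to upgrade Remark~\ref{rem:SOPright} to the statement $ss^* \in \Mul(\aB \tensor \Com(\hH))$. From $s, s^* \in \Mul(\aA \tensor \Com(L^2(G) \oplus \hH))$ one immediately gets $ss^*$ in the same multiplier algebra, and since $ss^*$ is supported in the $\hH$-corner this already restricts to $ss^* \in \Mul(\aA \tensor \Com(\hH))$. To refine the coefficient algebra from $\aA$ to $\aB$, Equation~\eqref{eq:SOPequivariance} forces $ss^*$ to commute with $u_g \tensor \one$ for every $g \in G$, so that $ss^*(b \tensor k_0)$ with $b \in \aB$ is fixed by $\Ad[u_g \tensor \one]$ and therefore lies in the $G$-invariant subalgebra $(\aA \tensor \Com(\hH))^{\alpha \tensor \id} = \aB \tensor \Com(\hH)$; right multiplication is symmetric. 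I would then expand $\pi_s(x) \cdot ss^* k ss^* = s j_\alpha(x) s^* k ss^*$ using $s^* s = \one$ and chase through the multiplier properties of $s^*$, of $j_\alpha(x) \in \Mul(\aA \tensor \Com(L^2(G)))$, and of $s$ -- here the fact that the compacts form an ideal is what lets each composition stay in the tensor product -- to conclude that the product already lies in $\aA \tensor \Com(\hH)$.

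The crux, and the main obstacle, is then to replace $\aA$ by $\aB$, which amounts to showing that $\pi_s(x)$ is itself invariant under $\Ad[u_g \tensor \one]$ for every $g \in G$. This is not apparent from $\pi_s(x) = s j_\alpha(x) s^*$ since neither factor individually commutes with $u_g \tensor \one$, but a short cancellation based on Equation~\eqref{eq:SOPequivariance} and its adjoint yields
\begin{equation*}
	(u_g \tensor \one) \pi_s(x) (u_g^* \tensor \one) = s (u_g \tensor r_g) j_\alpha(x) (u_g^* \tensor r_g^*) s^* = \pi_s(x),
\end{equation*}
where the second equality reflects the fact that the inner conjugation by $u_g$ inside the pointwise multiplication operator $j_\alpha(x)$ precisely cancels the shift introduced by the right translation $r_g$ on the $L^2(G)$-argument. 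With this invariance in hand, all three factors of $\pi_s(x) \cdot ss^* k ss^*$ are fixed by $\Ad[u_g \tensor \one]$, so the product lies in $\aB \tensor \Com(\hH)$; combined with the identity $\pi_s(x) = ss^* \pi_s(x) ss^*$, it lands in the corner $ss^*(\aB \tensor \Com(\hH)) ss^*$ as required, and right multiplication follows by taking adjoints.
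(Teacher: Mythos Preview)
Your argument is correct and follows the same route the paper indicates: the paper's entire proof is the single sentence ``Because $ss^*$ lies in $\Mul\bigl(\aB \tensor \Com(\hH)\bigr)$ (see Remark~\ref{rem:SOPright}) and the compact operators form an ideal, we instantly get the following statement.'' You have supplied the details the paper suppresses, in particular the $G$-invariance of $\pi_s(x)$ under $\Ad[u_g \tensor \one]$, which is the step that brings the coefficients down from $\aA$ to $\aB$ and is not made explicit in the paper.
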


The preceding discussion entails that $(\aA, G, \alpha)$ can be covariantly represented as multiplicators on $\aB \tensor \Com(\hH)$. 
In consequence, any  \Star-representation $\pi_\aB : \aB \to \End(\hH_\aB)$ gives rise to a covariant representation, $\bigl( (\pi_\aB \tensor \id) \circ \pi_s,\one_\aB \tensor \mu \bigr)$, of $(\aA, G, \alpha)$ on the Hilbert space $\pi_\aB \tensor \id(ss^*)(\hH_\aB \tensor \hH)$ that has trivial isotypic component $\hH_\aB$. 
If $\pi_\aB$ is faithful, then so is the covariant representation. 
We shall now demonstrate that, in fact, each faithful covariant representation of $(\aA, G, \alpha)$ is of this form.

To begin with, we notice that each $\pi_s(x)$, $x \in \aA$, intertwines the unitary representation $G \ni g \mapsto u_g \tensor \one_\hH \in \Unitary(\hH_s)$. Hence the subspace $\hH_p := p \bigl(\hH_\aA(1) \tensor \hH \bigr) \subseteq \hH_s$ is invariant under~$\pi_s$ and, thus, we obtain a \Star-representation of $\aA$ on $\hH_p$ by putting 
\begin{equation}
	\pi_p : \aA \rightarrow \End(\hH_p), \qquad \pi_p(x) := \pi_s(x) |_{\hH_p}^{\hH_p}.\label{eq:pi_p}
\end{equation}
Our next result establishes that $\hH_p$ is isomorphic to $\hilb K_\aA$, the range of the map $J_u$ from Equation~\eqref{eq:J_u}.

\begin{lemma}\label{lem:Sunitary}
	For the map $\Phi_s: \mathfrak{K}_\aA \rightarrow \hH_p$, $\Phi_s(f) := s(f)$ the following assertions hold:
	\begin{enumerate}
		\item
			$\Phi_s$ is a unitary map such that $(\one_\aA \tensor \mu_g) \Phi_s = \Phi_s (\one_\aA \tensor \lambda_g)$ for all $g \in G$.
		\item
			$\Phi_s(\eta \tensor \one_{L^2(G)}) := \eta \tensor \one_\C$ for all $\eta \in \hH_\aA(\one)$.
		\item
			$\Phi_s j_\alpha(x) = \pi_p(x) \Phi_s$ for all $x \in \aA$.
	\end{enumerate}
\end{lemma}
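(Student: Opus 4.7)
The plan is to dispose of the three assertions in turn, leveraging the intertwining relations for $s$ from Lemma~\ref{lem:coisometry} together with $s^*s = \one$. Assertion (2) is a direct unpacking of the normalization hypothesis: under the Peter--Weyl decomposition $L^2(G) = \bigoplus_\sigma V_\sigma \tensor \bar V_\sigma$, the constant function $\one_{L^2(G)}$ sits in the trivial summand $V_1 \tensor \bar V_1 \cong \C$, and since $s$ is normalized---i.e., $\hH_1 = \C$ and $s(1) = \one_\aA$---the isometry $s$ of Equation~\eqref{eq:S} restricts to the identity on $\hH_\aA \tensor V_1 \tensor \bar V_1$, yielding $\Phi_s(\eta \tensor \one_{L^2(G)}) = \eta \tensor \one_\C$.

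For assertion (1), I would argue in four small steps. First, for $f \in \hilb K_\aA$, Equation~\eqref{eq:SOPequivariance} gives
\begin{align*}
(u_g \tensor \one_\hH) s(f) = s(u_g \tensor r_g) f = s(f),
\end{align*}
so $s(f) \in \hH_\aA(\one) \tensor \hH$, and combining with $s = (ss^*)s$ this places $s(f)$ in $\hH_p$. Second, isometry of $\Phi_s$ is immediate from $s^*s = \one$. Third, for surjectivity I pick $\xi \in \hH_p$, write $\xi = ss^*\xi'$ with $\xi' \in \hH_\aA(\one) \tensor \hH$, and note that the adjoint of~\eqref{eq:SOPequivariance} reads $s^*(u_g \tensor \one_\hH) = (u_g \tensor r_g) s^*$, from which $(u_g \tensor r_g)(s^*\xi') = s^*\xi'$, so $s^*\xi' \in \hilb K_\aA$ and $\Phi_s(s^*\xi') = \xi$. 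Fourth, the $(\mu,\lambda)$-equivariance is a direct restatement of Equation~\eqref{eq:SOPcommuting}.

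For assertion (3), using $s^*s = \one$ and the definition of $\pi_p$ we obtain
\begin{align*}
\pi_p(x) \Phi_s(f) = \pi_s(x) s(f) = s j_\alpha(x) s^* s(f) = s j_\alpha(x) f,
\end{align*}
which equals $\Phi_s(j_\alpha(x) f)$ provided $j_\alpha(x) f$ still lies in $\hilb K_\aA$. The anticipated main obstacle is precisely this last invariance check: one must verify that $j_\alpha(x)$ commutes with $u_g \tensor r_g$ for each $g \in G$. Identifying $\hH_\aA \tensor L^2(G)$ with $L^2(G, \hH_\aA)$, the operator $j_\alpha(x)$ acts by pointwise multiplication by $h \mapsto \pi(\alpha_{h^{-1}}(x))$. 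Conjugating by $u_g \tensor r_g$ shifts the argument to $hg$ and sandwiches by $u_g, u_g^*$; a short application of the covariance relation $u_g \pi(\cdot) u_g^* = \pi(\alpha_g(\cdot))$ combined with $g(hg)^{-1} = h^{-1}$ shows that the resulting multiplication operator is $\pi(\alpha_{h^{-1}}(x))$ again. Hence $j_\alpha(x)$ preserves $\hilb K_\aA$ and (3) follows.
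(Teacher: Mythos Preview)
Your proof is correct and follows essentially the same route as the paper's: both use Equation~\eqref{eq:SOPequivariance} (and its adjoint) to establish that $s$ maps $\mathfrak{K}_\aA$ onto $\hH_p$, invoke $s^*s=\one$ for isometry, read off equivariance from Equation~\eqref{eq:SOPcommuting}, use normalization for (2), and unwind the definition of $\pi_s$ for (3). Your explicit verification that $j_\alpha(x)$ commutes with $u_g\tensor r_g$ (hence preserves $\mathfrak{K}_\aA$) fills in a detail the paper leaves implicit, and is handled correctly.
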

\begin{proof}
	For the first statement we claim that $s(\mathfrak{K}_\aA) = \hH_p$. 
	From this it follows that $\Phi_s$ is well-defined, surjective, and hence unitary, because $s$ is an isometry. 
	To prove the claim, we apply $ss^*s = s$ and $s$ to the inclusions $s(\mathfrak{K}_\aA) \subseteq \hH_\aA(1) \tensor \hH$ and \mbox{$s^*(\hH_\aA(1) \tensor \hH) \subseteq \mathfrak{K}_\aA$}, respectively, which in turn are a consequence of Equation~\eqref{eq:SOPequivariance}. 
	Furthermore, the commutation relations $\lambda_g r_g = r_g \lambda_g$, $g \in G$, and Equation~\eqref{eq:SOPcommuting} make it obvious that $\Phi_s$ is $G$-equivariant.
	The second statement is due to the fact that $s$ is normalized, while the last statement is clear from the defining Equation~\eqref{eq:pi_S}. 
\end{proof}

\begin{theorem}\label{thm:class_cov_rep}
	Let $(\aA, G, \alpha)$ be a free C\Star-dynamical system with fixed point algebra $\aB$. Furthermore, let $\hH_\aB$ be a nontrivial representation space of $\aB$. Then each faithful covariant representation of $(\aA, G, \alpha)$ such that the trivial isotypic component of the underlying representation space is isomorphic to $\hH_\aB$ is, up to unitary equivalence, of the form 
	\begin{align}
		\bigl(p(\hH_\aB \tensor \hH),\pi_p,\one_\aB \tensor \mu \bigr),\label{eq:class_cov_rep}
	\end{align}
	where $\hH$, $\mu$, $p$, and $\pi_p$ are as above (\cf Lemma~\ref{lem:coisometry} and Equation~\eqref{eq:pi_p}).
\end{theorem}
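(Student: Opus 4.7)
The plan is to exhibit an explicit unitary equivalence of $(\hH_\aA, \pi, u)$ with the target triple by going through the intermediate identifications $\hH_\aA \cong \mathfrak{K}_\aA \cong \hH_p$. All the heavy lifting has already been done in the preceding lemmas, so the task reduces to assembling the pieces and checking the intertwining relations.

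First I would use the isometry $J_u$ from Equation~\eqref{eq:J_u} to identify $\hH_\aA$ with the fixed point space $\mathfrak{K}_\aA$. Straightforward computations yield $J_u \pi(x) = j_\alpha(x) J_u$ for $x \in \aA$ as well as $J_u u_h = (\one_\aA \tensor \lambda_h) J_u$ for $h \in G$, so $J_u$ realizes a $G$-equivariant unitary equivalence of $(\hH_\aA, \pi, u)$ with the restriction of $(j_\alpha, \one_\aA \tensor \lambda)$ to $\mathfrak{K}_\aA$. Composing with the unitary $\Phi_s : \mathfrak{K}_\aA \to \hH_p$ from Lemma~\ref{lem:Sunitary}, which intertwines $j_\alpha$ with $\pi_p$ and $\one_\aA \tensor \lambda$ with $\one_\aA \tensor \mu$, produces a unitary $U := \Phi_s \circ J_u : \hH_\aA \to \hH_p \subseteq \hH_\aA(\one) \tensor \hH$ that intertwines $\pi(x)$ with $\pi_p(x)$ and $u_h$ with $\one_\aA \tensor \mu_h$. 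The second assertion of Lemma~\ref{lem:Sunitary} further shows that $U$ restricts on $\hH_\aA(\one)$ to the canonical inclusion $\eta \mapsto \eta \tensor \one_\C$, so the trivial isotypic component sits canonically inside $\hH_p$.

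To reach the target form, I would finally use the hypothesis to choose a $\aB$-equivariant unitary $\psi : \hH_\aA(\one) \to \hH_\aB$ and set $W := (\psi \tensor \one_\hH) \circ U$. The main obstacle is to verify that $\psi \tensor \one_\hH$ carries the projection $p$ and the representation $\pi_p$ built from $\pi|_{\hH_\aA(\one)}$ onto their counterparts built from $\pi_\aB$; once this is established, $W$ is immediately the desired unitary equivalence. This verification relies on the fact that $ss^*$ belongs to $\Mul(\aB \tensor \Com(\hH))$ (Remark~\ref{rem:SOPright}) and that $\pi_s(\aA)$ sits inside $\Mul(ss^*(\aB \tensor \Com(\hH))ss^*)$ (Corollary~\ref{cor:pi_s}), so that both $p$ and $\pi_p$ are intrinsically multipliers of C\Star-algebras built solely from $\aB$; any $\aB$-equivariant change of representation of the trivial isotypic component then transports the construction unchanged.
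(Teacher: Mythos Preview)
Your proposal is correct and follows essentially the same route as the paper: the paper's proof also defines the equivalence as the composition $\Phi_s \circ J_u$ and simply remarks that $J_u$ satisfies the analogues of the properties in Lemma~\ref{lem:Sunitary}, which is exactly what you verify explicitly. Your second paragraph, passing from $\hH_\aA(\one)$ to $\hH_\aB$ via a $\aB$-equivariant unitary $\psi$ and invoking Corollary~\ref{cor:pi_s} and Remark~\ref{rem:SOPright} to see that the construction depends only on the $\aB$-module structure, is a point the paper leaves implicit (it tacitly identifies $\hH_\aA(\one)$ with $\hH_\aB$), so you are in fact being slightly more careful than the original.
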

\begin{proof}
	We shall have established the theorem if we show that there is a unitary map $\Phi : \hH_\aA \to \hH_p$ such that $\Phi u_g = (\one_\aA \tensor \mu_g) \Phi$ for all $g \in G$, $\Phi(\eta) = \eta \tensor 1_\C$ for all $\eta \in \hH_\aA(\one)$, and $\Phi \pi(x) = \pi_p(x) \Phi$ for all $x \in  \aA$. 
	For this, we simply need to look at the composition $\Phi_s J_u $, because the map $J_u$ happens to satisfy equations similar to those of Lemma~\ref{lem:Sunitary}.
\end{proof}

Taking advantage of Remark~\ref{rem:partial_iso}, we even get the following result:

\begin{corollary}
	All faithful covariant representations of $(\aA, G, \alpha)$ are unitarily equivalent.
\end{corollary}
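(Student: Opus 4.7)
The plan is to combine Theorem \ref{thm:class_cov_rep} with Remark \ref{rem:partial_iso}. For a fixed faithful representation space $\hH_\aB$ of $\aB$, Theorem \ref{thm:class_cov_rep} already shows that every faithful covariant representation of $(\aA,G,\alpha)$ with trivial isotypic component $\hH_\aB$ is unitarily equivalent to a model $(p(\hH_\aB \tensor \hH), \pi_p, \one_\aB \tensor \mu)$ built from the auxiliary data $(\hH, \mu, s)$ supplied by Lemma \ref{lem:coisometry}. The content of the corollary is then precisely that these models are unitarily equivalent to one another regardless of the specific choice of $(\hH, \mu, s)$.

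To establish this, I would fix two data sets $(\hH, \mu, s)$ and $(\hH', \mu', s')$, with resulting models $\pi_p$ on $\hH_p$ and $\pi_{p'}$ on $\hH_{p'}$, and propose the natural candidate intertwiner
\[
t := s'\, s^*,
\]
which is implicit in the statement of Remark \ref{rem:partial_iso}. A short manipulation using $s^*s = \one = (s')^*s'$ yields the Murray-von Neumann identities $t^*t = ss^*$ and $tt^* = s'(s')^*$, while applying \eqref{eq:SOPcommuting} to both isometries (together with the adjoint identity $s^*\mu_g = \lambda_g s^*$) gives the equivariance $(\one \tensor \mu'_g)\, t = t\,(\one \tensor \mu_g)$. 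Consequently $t$ restricts to a unitary $U: \hH_p \to \hH_{p'}$ that intertwines the group representations $\one_\aB \tensor \mu$ and $\one_\aB \tensor \mu'$.

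It remains to check that $U \pi_p(x) = \pi_{p'}(x)\, U$ for every $x \in \aA$. Inserting the definitions $\pi_p(x) = s\, j_\alpha(x)\, s^*$ and $\pi_{p'}(x) = s'\, j_\alpha(x)\, (s')^*$ from \eqref{eq:pi_S}, both sides of this equation collapse to $s'\, j_\alpha(x)\, s^*$ after cancelling $s^*s$ and $(s')^*s'$, so the intertwining is essentially automatic. Combined with Theorem \ref{thm:class_cov_rep}, this closes the argument.

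The principal obstacle, to the extent that there is one, is conceptual rather than technical: one must recognise that the single operator $s's^*$ simultaneously realises the projection equivalence, the $G$-equivariance, and the intertwining of $\pi_p$ and $\pi_{p'}$. Each of these three properties drops out of a one-line manipulation of the defining relations of $s$ and $s'$, so once the right intertwiner is identified the verification becomes routine.
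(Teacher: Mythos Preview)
Your proposal is correct and takes essentially the same approach as the paper: the paper's entire justification is the phrase ``Taking advantage of Remark~\ref{rem:partial_iso}'' preceding the corollary, and you unpack that remark by writing down the explicit partial isometry $t = s's^*$ and verifying directly that it realises the Murray--von~Neumann equivalence, the $G$-equivariance, and the intertwining of $\pi_p$ with $\pi_{p'}$. Your argument is thus a fleshed-out version of what the paper leaves to the cited reference.
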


We continue by considering an arbitrary covariant representation $(\pi,u)$ of $(\aA, G,\alpha)$ on a Hilbert space $\hH_\aA$ and bring to mind that $\pi\bigl( A(\sigma) \bigr) \acts \hH_\aA(1) \subseteq \hH_\aA(\sigma)$ for all $\sigma \in \Irrep(G)$. 
Our second goal is to establish that these inclusions are, in fact, equalities.
To prove this, we proceed as follows. 
For each $\sigma \in \Irrep(G)$ we identify
\begin{equation*}
	\Gamma_{\hH_\aA}(\sigma) := \{\eta \in \hH_\aA \tensor V_\sigma : u_g \tensor \sigma_g(\eta) = \eta \quad \forall g \in G \}
\end{equation*}
with the multiplicity space of $\hH_\aA(\bar{\sigma})$ and note that $\Phi_u \tensor \id_{V_\sigma}$ provides an isomorphism between $\Gamma_{\hH_\aA}(\sigma)$ an the multiplicity space of $\mathfrak{K}_\aA(\bar{\sigma})$, the latter being the $\bar{\sigma}$-isotypic component of $\mathfrak{K}_\aA$ under the action $\one_\aA \tensor \lambda_g$, $g\in G$. 
Furthermore, for each $\sigma \in \Irrep(G)$ we write $\hH_\sigma$ for the multiplicity space of $\hH(\bar{\sigma})$ (\cf~Equation~\eqref{eq:decompH}) and infer that $p(\hH_\aA(1) \tensor \hH_\sigma)$ may be regarded as the multiplicity space of $\hH_p(\bar{\sigma}) = p(\hH_\aA(1) \tensor \hH)(\bar{\sigma})$. 
From this and Lemma~\ref{lem:Sunitary} it follows that for each $\sigma \in \Irrep(G)$ the map
\begin{equation}
	\hH_\aA(1) \tensor \hH_{\sigma} \to \Gamma_{\hH_\aA}(\sigma), 
	\qquad 
	\zeta \mapsto \pi\bigl( s(\sigma) \bigr)^*(\zeta)\label{eq:sbar}
\end{equation}
is surjective, where $\pi\bigl( s(\sigma) \bigr) \in \pi(\aA) \tensor \End(V_\sigma,\hH_\sigma) \subseteq \End(\hH_\aA \tensor V_\sigma,\hH_\aA \tensor \hH_\sigma)$ denotes the isometry coming from disassembling $s$ (\cf~Lemma~\ref{lem:coisometry}). 
Finally, we recall that for each $\sigma \in \Irrep(G)$ the map $\Gamma_{\hH_\aA} (\bar{\sigma}) \tensor V_\sigma \to \hH_\aA(\sigma)$ given on simple tensors by $\zeta \tensor v \mapsto \id \tensor \text{ev}_v(\zeta)$ is unitary, where $\text{ev}_v(\bar{w}):=  \rprod{}{w,v}$ for all $\bar{w} \in \bar{V}_\sigma$. 
Having disposed of these preliminary steps, for each $\sigma \in \Irrep(G)$ we may now conclude that 
\begin{align*}
	\hH_\aA(\sigma) 
	&= \lin \{ \ev_v(\zeta): v \in V_\sigma, \zeta \in \Gamma_{\hH_\aA}(\bar \sigma) \}
	\\
	&= \lin \bigl\{ \ev_v\bigl( \pi\bigl( s(\bar \sigma) \bigr)^*(\xi \tensor \eta) \bigr): v \in V_\sigma, \xi \in \hH_\aA(1), \eta \in \hH_{\bar \sigma} \bigr\}.
\end{align*}
Since for each $v \in V_\sigma$ and $\eta \in \hH_{\bar \sigma}$ the operator $\hH_\aA(\one) \ni \xi \mapsto \ev_v\bigl( \pi\bigl( s(\bar \sigma) \bigr)^*(\xi \tensor \eta) \bigr) \in \hH_\aA$ lies in $\pi\bigl(A(\sigma)\bigr)$, we have established our claim:

\begin{corollary}\label{cor:Sunitary}
	Let $(\pi,u)$ be a covariant representation of $(\aA, G,\alpha)$ on a Hilbert space~$\hH_\aA$. Then for each $\sigma \in \Irrep(G)$ we have $\pi\bigl( A(\sigma) \bigr) \acts \hH_\aA(1) = \hH_\aA(\sigma)$.
\end{corollary}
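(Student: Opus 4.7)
The inclusion $\pi\bigl(A(\sigma)\bigr) \cdot \hH_\aA(1) \subseteq \hH_\aA(\sigma)$ is immediate from the covariance of $(\pi, u)$ and the definition of the isotypic component $A(\sigma)$, so my plan focuses entirely on the reverse inclusion. The idea is to recover every vector in $\hH_\aA(\sigma)$ via its multiplicity space $\Gamma_{\hH_\aA}(\bar\sigma)$ and then to unravel that multiplicity space using the isometries supplied by freeness.

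The first step will be to fix an identification $\hH_\aA(\sigma) \cong \Gamma_{\hH_\aA}(\bar\sigma) \tensor V_\sigma$ via the canonical unitary $\zeta \tensor v \mapsto \id \tensor \ev_v(\zeta)$, so that it suffices to describe $\Gamma_{\hH_\aA}(\bar\sigma)$ concretely. Applying Lemma~\ref{lem:coisometry} to the covariant representation $(\pi,u)$, I get a unitary representation $\mu:G \to \Unitary(\hH)$ and a normalized isometry $s$ which decomposes as in Equation~\eqref{eq:S} into isometries $\pi\bigl(s(\sigma)\bigr) \in \pi(\aA) \tensor \End(V_\sigma,\hH_\sigma)$ intertwining the $G$-actions in the prescribed way.

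In the second step I will argue that for each $\sigma \in \Irrep(G)$ the map $\hH_\aA(1) \tensor \hH_{\bar\sigma} \to \Gamma_{\hH_\aA}(\bar\sigma)$ sending $\xi \tensor \eta \mapsto \pi\bigl(s(\bar\sigma)\bigr)^*(\xi \tensor \eta)$ is surjective. This is a direct consequence of Lemma~\ref{lem:Sunitary}: once one transports $\Gamma_{\hH_\aA}(\bar\sigma)$ to the multiplicity space of $\mathfrak{K}_\aA(\sigma)$ via $\Phi_u \tensor \id$ and then applies the unitary $\Phi_s$ onto the multiplicity space of $\hH_p(\sigma) = p\bigl(\hH_\aA(1) \tensor \hH\bigr)(\sigma)$, which is $p\bigl(\hH_\aA(1) \tensor \hH_{\bar\sigma}\bigr)$, the displayed map is essentially the adjoint of this composition up to the isomorphism of~\eqref{eq:sbar}, hence surjective.

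For the final step I combine these two identifications: every vector of $\hH_\aA(\sigma)$ is a linear combination of vectors of the form $\ev_v\bigl(\pi\bigl(s(\bar\sigma)\bigr)^*(\xi \tensor \eta)\bigr)$ with $v \in V_\sigma$, $\xi \in \hH_\aA(1)$, $\eta \in \hH_{\bar\sigma}$. The element $(\id \tensor \ev_v \tensor \ev_\eta)\bigl(s(\bar\sigma)^*\bigr) \in \aA$ lies in $A(\sigma)$ thanks to the equivariance $\alpha_g\bigl(s(\bar\sigma)\bigr) = s(\bar\sigma)(\one_\aA \tensor \bar\sigma_g)$ and the standard identification $V_{\bar\sigma}^* \cong V_\sigma$; applying $\pi$ to this element and then acting on $\xi \in \hH_\aA(1)$ reproduces the above vector. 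The main obstacle I anticipate is precisely this bookkeeping: one has to move the partial evaluations past $\pi\bigl(s(\bar\sigma)\bigr)^*$ to see that the resulting operator factors as $\pi(a)\xi$ for some $a \in A(\sigma)$, but once this is in place the surjectivity established in the previous step closes the argument.
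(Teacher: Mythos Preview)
Your proposal is correct and follows essentially the same route as the paper: identify $\hH_\aA(\sigma)$ with $\Gamma_{\hH_\aA}(\bar\sigma) \tensor V_\sigma$, use Lemma~\ref{lem:Sunitary} (via the chain $\Gamma_{\hH_\aA}(\bar\sigma) \cong$ multiplicity space of $\mathfrak K_\aA(\sigma) \cong$ multiplicity space of $\hH_p(\sigma)$) to see that $\pi\bigl(s(\bar\sigma)\bigr)^*$ surjects $\hH_\aA(1)\tensor\hH_{\bar\sigma}$ onto $\Gamma_{\hH_\aA}(\bar\sigma)$, and then observe that the slice operators $\xi \mapsto \ev_v\bigl(\pi(s(\bar\sigma))^*(\xi\tensor\eta)\bigr)$ lie in $\pi\bigl(A(\sigma)\bigr)$. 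Two small remarks: the intertwiner you call $\Phi_u$ is the map $J_u$ of Equation~\eqref{eq:J_u} in the paper's notation; and since the corollary is stated for an \emph{arbitrary} covariant representation while Lemma~\ref{lem:coisometry} and Lemma~\ref{lem:Sunitary} were set up for faithful ones, you should note (as the paper implicitly does) that the isometries $s(\sigma)$ live in $\aA\tensor\End(V_\sigma,\hH_\sigma)$ and hence can be pushed through any $\pi$, with the surjectivity argument going through verbatim.
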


\pagebreak[3]
\section{Lifting spectral triples}\label{sec:permanence}

In this section we study permanence properties of free C\Star-dynamical systems with respect to spectral triples. 
For a start we fix the following data:
\begin{itemize}
	\item
		a compact Lie group $G$ of dimension $n$ with Lie algebra $\Lie(G)$;
	\item
		a free C\Star-dynamical system $(\aA,G,\alpha)$ with fixed point algebra~$\aB$;
	\item
		for each $\sigma \in \Irrep(G)$ a finite-dimensional Hilbert space $\hH_\sigma$ and an isometry $s(\sigma)$ in  $\aA \tensor \End(V_\sigma,\hH_\sigma)$ satisfying $\alpha_g\bigl(s(\sigma)\bigr)=s(\sigma) (\one_\aA \tensor \sigma_g)$ for all $g \in G$ (\cf~\cite[Lem.~3.2]{SchWa17}). 
		In particular, for $1 \in \Irrep(G)$, we choose $\hH_1 := \C$ and $s(1) := \one_\aA$; 
	\item
		a spectral triple $\DD_\aB := (\aB_0,\pi_\aB,\hH_\aB,D_\aB)$ on $\aB$.
\end{itemize}
Our main objective is to construct a spectral triple $\mathbb{D}_\aA$ on $\aA$ by means of the above data that extends $\DD_\aB$ in the sense of Definition~\ref{def:lift} and incorporates the geometry of $G$.

\begin{defn}\label{def:lift}
	Let $\DD_\aA= (\aA_0,\pi_\aA,\hH_\aA,D_\aA)$ and $\DD_\aB = (\aB_0,\pi_\aB, \hH_\aB, D_\aB)$ be two spectral triples and suppose that $\aB_0 \subseteq \aA_0$. We say that $\DD_\aA$ is a \emph{lift} of $\DD_\aB$ or that $\DD_\aA$ \emph{lifts} $\DD_\aB$ if there is an isometry $t: \hH_\aB \to \hH_\aA$ such that the following conditions are satisfied: 
	\begin{enumerate}[label={\arabic*}.,ref=\ref{def:lift}.{\textit{\arabic*}}.]
		\item\label{def:liftrep}
			$\pi_\aA(b) t = t \pi_\aB(b)$ for all $b\in \aB_0$;
		\item\label{def:liftD}
			$t \bigl( \dom(D_B) \bigr) \subseteq \dom(D_A)$ and $D_A t = t D_B $ on $\dom(D_B)$.
	\end{enumerate}
	Combining~\ref{def:liftrep} and~\ref{def:liftD} gives $[D_\aA, \pi_\aA(b)] t = t [D_\aB, \pi_\aB(b)]$ for all $b\in \aB$. 
\end{defn}

\pagebreak[3]
\begin{remark}\label{rmk:strong_lift}
	Definition~\ref{def:lift} is weak in the sense that it only addresses the Hilbert space $t(\hH_\aB) \subseteq \hH_\aA$. 
	We may look at the sets of differential 1-forms $\Omega^1(\aA_0)$ and $\Omega^1(\aB_0)$ associated with the spectral triples $\DD_\aA$ and $\DD_\aB$, respectively (see, \eg,~\cite[Sec.~8.1]{GaPrRa00} and ref.~therein). 
	If $\DD_\aA$ is a lift of $\DD_\aB$, then it follows immediately that
	\begin{equation*}
		\pi_\aA(x) [D_\aA, \pi_\aA(y)] \, t = t \, \pi_\aB(x) [D_\aB, \pi_\aB(y)]
	\end{equation*}
	for all $x,y \in \aB_0$, and hence $\Omega^1(\aB_0) = t^* \Omega^1(\aA_0) t$. 
	However, in general $\Omega^1(\aB_0)$ does not embed into $\Omega^1(\aA_0)$ as a $\aB_0$-bimodule. 
	For the na\"\i ve construction of a lift given in this article the natural embedding $\pi_\aB(x) [D_\aB,\pi_\aB(y)] \mapsto \pi_\aA(x) [D_\aA, \pi_\aA(y)]$ for all $x,y \in \aB_0$ is well-defined only under additional assumptions. 
	Without further requirements we do not yet know whether this map becomes well-defined with a more refined construction. 
\end{remark}

\begin{example}
	In~\cite{DaSi13,DaZu13} the authors consider spectral triples that are equivariant with respect to a torus action. Given such a spectral triple $\DD_\aA = (\aA_0, \pi_\aA, \hH_\aA, D_\aA, J_\aA)$, they show that $\DD_\aA$ may under certain conditions (\cf~\cite[Def. 3.3 and Def. 3.4]{DaZu13}) be restricted to a spectral triple $\DD_\aB = (\aB_0, \pi_\aB, \hH_\aB, D_\aB, J_\aB)$ of the respective fixed point algebra $\aB_0$. Then it is an easy matter to check that $\mathbb D_\aA$ lifts $\mathbb D_\aB$ in the sense of Definition~\ref{def:lift}.
\end{example}

A key feature of our free C\Star-dynamical system $(\aA,G,\alpha)$ is the factor system associated with the isometries $s(\sigma)$, $\sigma \in \Irrep(G)$, (see~\cite[Def.~4.1]{SchWa17}), which we now recall for the convenience of the reader. 
Given a finite-dimensional representation $(\sigma,V_\sigma)$ of $G$, we decompose it into irreducible subrepresentations $\sigma = \sigma_1 \oplus \dots \oplus \sigma_n$ and define an isometry $s(\sigma) \in \aA \tensor \End(V_\sigma,\hH_\sigma)$ satisfying $\alpha_g \bigl(s(\sigma) \bigr) = s(\sigma) (\one_\aA \tensor \sigma_g)$ for all $g \in G$ by summing up the isometries $s(\sigma_1), \dots, s(\sigma_n)$. 
That is, we put $\hH_\sigma := \hH_{\sigma_1} \oplus \dots \oplus \hH_{\sigma_n}$ and $s(\sigma) := s(\sigma_1) \oplus \dots \oplus s(\sigma_n)$. 
In this way we extend the mapping $\sigma \mapsto \hH_\sigma$ to an additive functor from the representation category of $G$ into the category of finite-dimensional Hilbert spaces and the mapping $\sigma \mapsto s(\sigma)$ to a family of $G$-equivariant isometries that is indexed by the representation category of $G$ and behaves naturally with respect to intertwiners. 
Notably, it is easy to check that for each finite-dimensional representation $\sigma$ of $G$ we have 
\begin{align*}
	\Gamma_\aA(\sigma) = s(\sigma)^*( \aB \tensor \hH_\sigma).
\end{align*}
Furthermore, we obtain a \Star-homomorphism 
\begin{equation*}
	\gamma_\sigma: \aB \to \aB \otimes \End(\hH_\sigma),
	\quad
	\gamma_\sigma(b) := s(\sigma) (b \tensor \one_{V_\sigma}) s(\sigma)^*,
\end{equation*}
to which we refer as \emph{coaction} of the factor system. 
For each pair $(\sigma,\tau)$ of finite-dimensional representations of $G$ we obtain an element 
\begin{equation*}
	\omega(\sigma, \tau) := s(\sigma \tensor \tau) s(\sigma)^* s(\tau)^* \in \aB \tensor \End(\hH_\sigma \tensor \hH_\tau,\hH_{\sigma \tensor \tau}),
\end{equation*}
which we call the \emph{cocycle} of the factor system. 
Here $s(\sigma)$ and $s(\tau)$ are regarded amplified to act trivially on $V_\tau$ and $\hH_\sigma$, respectively. 
The most important relations of the coactions and the cocycles are captured by the following equations:
\begin{align}
	\omega(\sigma,\tau) \omega(\sigma,\tau)^* = 	\gamma_{\sigma \tensor \tau}(\one_\aB), & \qquad \omega(\sigma,\tau)^* \omega(\sigma,\tau) = (\gamma_\tau)_{13} \bigl( \gamma_\sigma(\one_\aB) \bigr), 
	\label{eq:ranges_sys}
	\\
	 \gamma_{\sigma \tensor \tau}(b) \omega(\sigma, \tau) &= \omega(\sigma, \tau) (\gamma_\tau)_{13} \bigl( \gamma_\sigma(b) \bigr), 
	\notag
	\\ 
	\omega(\sigma, \tau \tensor \rho) \omega(\tau, \rho)_{134} &= \omega(\sigma \tensor \tau, \rho) (\gamma_\rho)_{14} \bigr( \omega(\sigma, \tau) \bigr) 
	\notag\label{eq:cocycle_sys}
\end{align}
for all finite-dimensional representations $\sigma, \tau, \rho$ of $G$ and $b \in \aB$ (see~\cite[Lem.~4.3]{SchWa18}). The subindices refer to the leg numbering within the underlying tensor product.

\subsection{Lifting the algebra}\label{sec:liftA}

We do note require that $\aA$ comes equipped with a dense unital \Star-subalgebra of smooth elements as initial data. 
Hence, as a fist step towards a spectral triple on $\aA$, we fix such an algebra. 
To this end, we recall that  each isotypic component $\aA(\bar\sigma)$, $\sigma \in \Irrep(G)$, admits a linear bijection $\Phi_\sigma: \Gamma_\aA(\sigma) \tensor \bar V_\sigma \to \aA(\bar \sigma)$ (see~Equation~\eqref{eq:iso_section}). 
Since $\Gamma_\aA(\sigma) = s(\sigma)^*(\aB \tensor \hH_\sigma)$, it immediately follows that $\aA(\bar\sigma)$ is linearly spanned by the elements
\begin{equation}\label{eq:element}
	a_\sigma(b \tensor \eta \tensor \bar v) := \Phi_\sigma \bigl( s(\sigma)^*(b \tensor \eta)  \tensor \bar v \bigr),
\end{equation}
where $b \in \aB$, $\eta \in \hH_\sigma$, and $\bar v \in \bar V_\sigma$. We may extend this notation linearly to define an element $a_\sigma(x) \in \aA(\bar\sigma)$ for each $x \in \aB \tensor \hH_\sigma \tensor \bar V_\sigma$. Moreover, we may extend Equation~\eqref{eq:element} to any finite-dimensional representation $\sigma$ of $G$ to get an element $a_\sigma(x) \in \aA$ for each $x \in \aB \tensor \hH_\sigma \tensor \bar V_\sigma$.
The action $\alpha$ and the multiplication on $\aA$ then take the form
\begin{align}
	\label{eq:action_on_elements}
	\alpha_g \bigl( a_\sigma(b \tensor \eta \tensor \bar v) \bigr)
	&= a_\sigma(b \tensor \eta \tensor \bar\sigma_g v),
	\\
	\label{eq:multiplication_of_elements}
	a_\sigma(b \tensor \eta \tensor \bar v) \cdot a_\tau(c \tensor \vartheta \tensor \bar w)
	&= a_{\sigma \tensor \tau} \bigl( \omega(\sigma, \tau) \; \gamma_\tau(b)_{13} \; (\xi \tensor \eta \tensor \vartheta \tensor \bar v \tensor \bar w) \bigr)
\end{align}
for all $g \in G$, $\sigma, \tau \in \Irrep(G)$, $b,c \in \aB$, $\eta \in \hH_\sigma$, $\vartheta \in \hH_\tau$, $\bar v \in \bar V_\sigma$, and $\bar w \in \bar V_\tau$. 
It is shown in~\cite[Sec.~5]{SchWa17} that also the involution can be made explicit on these elements. 
Indeed, for each $\sigma \in \Irrep(G)$ there is an antilinear map $J_\sigma: \aB \tensor \hH_\sigma \to \aB \tensor \hH_{\bar \sigma}$ such that
\begin{equation} 
	\label{eq:involution_of_elements}
	a_\sigma(x \tensor \bar v)^* = a_{\bar \sigma} \bigl( J_\sigma(x) \tensor v \bigr)
\end{equation}
for all $x \in \aB \tensor \hH_\sigma$ and $\bar v \in \bar V_\sigma$. Summarizing, we can assert that the elements $a_\sigma(x)$ for a finite-dimensional representation $\sigma$ of $G$ and $x \in \aB \tensor \hH_\sigma \tensor \bar V_\sigma$ form a unital \Star-subalgebra, which is dense and $G$\nobreakdash-invariant. Furthermore, under some relatively mild conditions on the factor system we may restrict to elements $x \in \aB_0 \tensor \hH_\sigma \tensor \bar V_\sigma$:

\begin{theorem}\label{thm:A_0}
	Suppose that for each $\sigma, \tau \in \Irrep(G)$ we have $\gamma_\sigma(\aB_0) \subseteq \aB_0 \tensor \End(\hH_\sigma)$ and $\omega(\sigma, \tau) \in \aB_0 \tensor \End(\hH_\sigma \tensor \hH_\tau, \hH_{\sigma \tensor \tau})$. 
	Then the set 
	\begin{equation*}
		\aA_0 := \{ a_\sigma(x)  : \sigma~\text{finite-dimensional representation of $G$}, x \in \aB_0 \tensor \hH_\sigma \tensor \bar V_\sigma \}.
	\end{equation*}
	is a dense and $G$-invariant unital \Star-subalgebra of $\aA$ satisfying $\aA_0^G = \aB_0$. 
	The action~$\alpha$, the multiplication, and the involution on $\aA_0$ are given by the Equations~\eqref{eq:action_on_elements}, \eqref{eq:multiplication_of_elements}, \eqref{eq:involution_of_elements}, respectively. 
\end{theorem}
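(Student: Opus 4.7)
The formulas \eqref{eq:action_on_elements}, \eqref{eq:multiplication_of_elements}, \eqref{eq:involution_of_elements} already hold on the larger set $\aA_{00} := \{a_\sigma(x) : x \in \aB \tensor \hH_\sigma \tensor \bar V_\sigma\}$ by the results recalled from~\cite{SchWa17}, and $\aA_{00}$ is known to be a dense $G$-invariant unital \Star-subalgebra. Hence the strategy is to take these formulas for granted and argue that, under the stated hypotheses on $\gamma_\sigma$ and $\omega(\sigma,\tau)$, the subset $\aA_0 \subseteq \aA_{00}$ is closed under multiplication, involution, and the $G$-action, and still dense with the correct fixed-point algebra.

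The plan is as follows. First, closure under the $G$-action is immediate from \eqref{eq:action_on_elements}, since $\alpha_g$ only permutes the factor $\bar v \in \bar V_\sigma$ and preserves the requirement $b \in \aB_0$. Second, unitality comes from writing $1_\aA = a_1(1_\aB \tensor 1 \tensor 1)$, using the normalization $\hH_1 = \C$, $s(1) = 1_\aA$. Third, closure under multiplication uses \eqref{eq:multiplication_of_elements}: for $b,c \in \aB_0$, the hypothesis $\gamma_\tau(\aB_0) \subseteq \aB_0 \tensor \End(\hH_\tau)$ ensures $\gamma_\tau(b)_{13}(c \tensor \eta \tensor \vartheta) \in \aB_0 \tensor \hH_\sigma \tensor \hH_\tau$, and then the hypothesis $\omega(\sigma,\tau) \in \aB_0 \tensor \End(\hH_\sigma \tensor \hH_\tau, \hH_{\sigma \tensor \tau})$ together with the fact that $\aB_0$ is an algebra places the product in $\aB_0 \tensor \hH_{\sigma \tensor \tau} \tensor \bar V_\sigma \tensor \bar V_\tau$, as required.

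The main obstacle is closure under the involution \eqref{eq:involution_of_elements}, for which we need $J_\sigma(\aB_0 \tensor \hH_\sigma) \subseteq \aB_0 \tensor \hH_{\bar\sigma}$. The idea is to use the explicit description of $J_\sigma$ provided in \cite[Sec.~5]{SchWa17}, which expresses $J_\sigma$ in terms of the coaction $\gamma_\sigma$ and the cocycle $\omega(\sigma,\bar\sigma)$ (together with the canonical antilinear conjugation $\hH_\sigma \to \bar \hH_\sigma$ and evaluation against the intertwiner implementing $\sigma \tensor \bar \sigma \supseteq 1$). The relations \eqref{eq:ranges_sys} imply that $\omega(\sigma,\bar\sigma)$ is a partial isometry with range projection $\gamma_{\sigma \tensor \bar\sigma}(\one_\aB)$, which by hypothesis lies in $\aB_0 \tensor \End(\hH_{\sigma \tensor \bar\sigma})$; combining this with $\gamma_\sigma(\aB_0) \subseteq \aB_0 \tensor \End(\hH_\sigma)$ shows that $J_\sigma$ maps $\aB_0 \tensor \hH_\sigma$ into $\aB_0 \tensor \hH_{\bar\sigma}$.

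Finally, density of $\aA_0$ follows from the Peter--Weyl decomposition $\aA = \overline{\bigoplus_\sigma A(\bar\sigma)}$: each $A(\bar\sigma)$ is the image of $\Phi_\sigma$, so elements of the form $a_\sigma(x)$ with $x \in \aB \tensor \hH_\sigma \tensor \bar V_\sigma$ span $A(\bar\sigma)$, and restricting $x$ to $\aB_0 \tensor \hH_\sigma \tensor \bar V_\sigma$ still gives a dense subspace of $A(\bar\sigma)$ because $\aB_0$ is dense in $\aB$ and $s(\sigma)^*$ is a bounded map (so continuity transfers density). The identity $\aA_0^G = \aB_0$ is then immediate: by \eqref{eq:action_on_elements} the $G$-fixed part of $\aA_0$ is spanned by the trivial-representation summands $a_1(b \tensor 1 \tensor 1) = b$, which exhausts $\aB_0$.
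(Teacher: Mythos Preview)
Your proposal is correct and follows essentially the same approach as the paper's own proof: both argue density via continuity of $a_\sigma$ and density of $\aB_0$ in $\aB$, derive $G$-invariance and $\aA_0^G = \aB_0$ from \eqref{eq:action_on_elements}, obtain closure under multiplication from \eqref{eq:multiplication_of_elements} using the hypotheses on $\gamma_\sigma$ and $\omega(\sigma,\tau)$, and defer closure under the involution to the explicit form of $J_\sigma$ in \cite[Sec.~5]{SchWa17}. Your write-up is in fact more detailed than the paper's, which leaves the verification of $J_\sigma(\aB_0 \tensor \hH_\sigma) \subseteq \aB_0 \tensor \hH_{\bar\sigma}$ entirely to the reader.
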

\begin{proof}
	Choosing $\sigma$ trivial, we see at once that $\aB_0 \subseteq \aA_0$. 
	Since $\aB_0$ is dense in $\aB$, it follows that $\aA_0$ is dense in $\aA$. 
	Equation~\eqref{eq:action_on_elements} shows that $\aA_0$ is $\alpha$-invariant and that $\aA_0^G = \aB_0$. 
	Equation~\eqref{eq:multiplication_of_elements} implies that $\aA_0$ is, under the given conditions, a subalgebra of $\aA$. 
	We leave it to the reader to follow the construction in~\cite[Sec.~5]{SchWa17} to verify that $J_\sigma(\aB_0 \tensor \hH_\sigma) \subseteq \aB_0 \tensor \hH_{\bar \sigma}$ for all $\sigma \in \Irrep(G)$ and hence that $\aA_0$ is, in fact, a \Star-algebra.
\end{proof}

Throughout the rest of the paper we make the standing assumptions that 
\begin{align*}
	\gamma_\sigma(\aB_0) &\subseteq \aB_0 \tensor \End(\hH_\sigma)
	&
	&\text{and}
	&
	\omega(\sigma, \tau) &\in \aB_0 \tensor \End(\hH_\sigma \tensor \hH_\tau, \hH_{\sigma \tensor \tau})
\end{align*}
for all $\sigma, \tau \in \Irrep(G)$, and we proceed with the subalgebra $\aA_0$ as the algebra of smooth functions in the spectral triple on $\aA$. 

\begin{remark}\label{rem:A_0}
	\begin{enumerate}
		\item 
			Since the action $\alpha$ is smooth on each isotypic component, the algebra~$\aA_0$ is contained in $\aA^\infty$, the smooth domain of~$\alpha$.
		\item
			Putting $A_0(\sigma) := \Span \{ a_\sigma(x) : x \in \aB_0 \tensor \hH_{\bar{\sigma}} \tensor V_\sigma \}$  for each $\sigma \in \Irrep(G)$, we see at once that $\aA_0$ decomposes into the algebraic direct sum $\bigoplus_{\sigma\in\Irrep(G)}^{\text{alg}}A_0(\sigma)$.
	\end{enumerate}
\end{remark}

\begin{remark}
	Concrete examples of C\Star-dynamical systems typically offer a canonical subalgebra for a spectral triple. 
	For a reasonable choice of isometries $s(\sigma)$, $\sigma \in \Irrep(G)$, the algebra $\aA_0$ is included, because it is the minimal \Star-subalgebra compatible with the given data.
	More precisely, $\aA_0$ is the smallest \Star-subalgebra of $\aA$ that is invariant under~$\alpha$, satisfies $\aB_0 \subseteq \aA_0^G$, and such that each $s(\sigma)$, $\sigma \in \Irrep(G)$, lies in $\aA_0 \tensor \End(V_\sigma, \hH_\sigma)$. 
\end{remark}

\subsection{Lifting the \texorpdfstring{$^*$}{*}-representation}\label{sec:liftrep}

Our next goal is to provide a faithful covariant representation of $(\aA,G,\alpha)$ that extends the $^*$-representation $\pi_\aB : \aB \to \End(\hH_\aB)$. For this purpose, we consider the unitary representation $\mu: G \to \mathcal U(\hH)$ on $\hH$ as introduced in the proof of Lemma~\ref{lem:coisometry}, that is,
\begin{align}\label{eq:decH}
	\hH &:= \bigoplus_{\sigma \in \Irrep(G)} \hH_\sigma \tensor \bar{V}_\sigma
		&
		&\text{and}
		&
	\mu_g &:= \bigoplus_{\sigma \in \Irrep(G)} \one_{\hH_\sigma} \tensor \bar{\sigma}_g.
\end{align}
We recall from Corollary~\ref{cor:pi_s} that $\aA$ admits a faithful \Star-homomorphism 
\begin{align*}
	\pi_s: \aA \to \Mul \bigl( ss^* \bigl(\aB \tensor \Com(\hH)  \bigr) ss^* \bigr).
\end{align*}
Composing this with the faithful \Star-representation $\pi_\aB \tensor \id : \aB \tensor \Com(\hH) \to \End(\hH_\aB \tensor \hH)$ and putting $p := \pi_\aB \tensor \id(ss^*) \in \End(\hH_\aB \tensor \hH)$, we obtain a faithful \Star-representation of~$\aA$ on the Hilbert space $\hH_p := p (\hH_\aB \tensor \hH)$:
\begin{equation}\label{eq:hatpi}
	\pi_\aA : \aA \to \End(\hH_p),
	\qquad
	\pi_\aA:= (\pi_\aB \tensor \id) \circ \pi_s.
\end{equation}
For an explicit form of $\pi_\aA$, we proceed analogously to Section~\ref{sec:liftA}. 
To simplify notation we regard $\aB$ as subalgebra of $\End(\hH_\aB)$, omitting the representation $\pi_\aB$. 
For $\sigma \in \Irrep(G)$, $\xi \in \hH_\aB$, $\eta \in \hH_\sigma$, and $\bar v \in \bar V_\sigma$ we define a vector in $\hH_p$ by
\begin{equation*}
	\psi_\sigma(\xi \tensor \eta \tensor \bar v) := s(\sigma) s(\sigma)^* (\xi \tensor \eta) \tensor \bar v.
\end{equation*}
We extend this notation linearly in all components to arbitrary finite-dimensional representations $\sigma$ of~$G$ and vectors $x \in \hH_\aB \tensor \hH_\sigma \tensor \bar V_\sigma$ as argument and notice that the vectors $\psi_\sigma(x)$ are dense in $\hH_p$. The action $u_\aA := \one_\aB \tensor \mu$ on $\hH_p$ and the \Star-representation $\pi_\aA$ then take the form
\begin{align}
	\notag
	(u_\aA)_g \acts \psi_\sigma(\xi \tensor \eta \tensor \bar v) 
	&= \psi_\sigma(\xi \tensor \eta \tensor \bar \sigma_g \bar v), 
	\\
	\label{eq:multiplication_with_vectors}
	\pi_\aA \bigl( a_\sigma(b \tensor \eta \tensor \bar v) \bigr) \acts \psi_\tau(\xi \tensor \vartheta \tensor \bar w)
	&= \psi_{\sigma \tensor \tau} \bigl( \omega(\sigma, \tau) \; \gamma_\tau(b)_{13} \; (\xi \tensor \eta \tensor \vartheta \tensor \bar v \tensor \bar w) \bigr)
\end{align}
for all $g \in G$, $\sigma, \tau \in \Irrep(G)$, $b \in \aB$, $\xi \in \hH_\aB$, $\eta \in \hH_\sigma$, $\vartheta \in \hH_\tau$, $\bar v \in \bar V_\sigma$, and $\bar w \in \bar V_\tau$. 
One instantly becomes aware of the similarities to the Equations~\eqref{eq:action_on_elements} and~\eqref{eq:multiplication_of_elements} above. 
Choosing the trivial representation as $\sigma$, we see at once that $\pi_\aA(b)$ acts as $\gamma_\tau(b)$ for all $b \in \aB$, and, in consequence, the \Star-representation $\pi_\aB$ is recovered on $\hH_\aB = \hH_\aB \tensor \C \subseteq \hH_p$. 
That is, in summary:

\begin{theorem}\label{thm:lift_cov_rep}
	The pair $(\pi_\aA,u_\aA)$ is a faithful covariant representation of $(\aA, G, \alpha)$ on~$\hH_p$. Furthermore, writing $t:\hH_\aB \to \hH_p$ for the isometry given by $t(\xi) := \xi \tensor 1_\C$, we have $\pi_\aA(b) t = t \pi_\aB(b)$ for all $b \in \aB$.
\end{theorem}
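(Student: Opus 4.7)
The plan is to leverage Lemma~\ref{lem:pi_S} and Corollary~\ref{cor:pi_s}, which already furnish a faithful $*$-homomorphism $\pi_s$ of $\aA$ into $\Mul \bigl(ss^*(\aB \tensor \Com(\hH))ss^* \bigr)$ that is equivariant with respect to $\alpha$ and conjugation by $\one_\aA \tensor \mu$. The remaining work is to transport this abstract covariant structure to the concrete Hilbert space $\hH_p$ via the faithful nondegenerate representation $\pi_\aB \tensor \id$ and to verify the four desired properties: $u_\aA$-invariance and continuity on $\hH_p$, faithfulness, covariance, and the intertwining relation.

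First, I would check that $\hH_p$ is stable under $\one_{\hH_\aB} \tensor \mu_g$. The key observation is that $ss^*$ and $\one_\aA \tensor \mu_g$ commute in $\Mul \bigl( \aA \tensor \Com(\hH) \bigr)$: taking the adjoint of Equation~\eqref{eq:SOPcommuting} yields $s^*(\one_\aA \tensor \mu_g) = (\one_\aA \tensor \lambda_g)s^*$, whence
\begin{equation*}
	ss^*(\one_\aA \tensor \mu_g) = s(\one_\aA \tensor \lambda_g) s^* = (\one_\aA \tensor \mu_g) ss^*.
\end{equation*}
Applying $\pi_\aB \tensor \id$ and passing to the multiplier algebra gives $[p, \one_{\hH_\aB} \tensor \mu_g] = 0$, so $\hH_p$ is $u_\aA$-invariant and strong continuity of $u_\aA$ is inherited from that of $\mu$.

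Next, faithfulness of $\pi_\aA = (\pi_\aB \tensor \id) \circ \pi_s$ is a composition of two faithful maps: $\pi_s$ is faithful by Lemma~\ref{lem:pi_S}, while $\pi_\aB \tensor \id$ is faithful on $\aB \tensor \Com(\hH)$ by injectivity of the minimal tensor product. Its restriction to the corner $ss^*(\aB \tensor \Com(\hH))ss^*$ is a nondegenerate representation on $\hH_p$, hence it extends uniquely and faithfully to the multiplier algebra. Covariance is then immediate: applying $\pi_\aB \tensor \id$ to the equivariance identity $\pi_s \circ \alpha_g = \Ad[\one_\aA \tensor \mu_g] \circ \pi_s$ from Lemma~\ref{lem:pi_S} and restricting to $\hH_p$ yields $\pi_\aA \circ \alpha_g = \Ad[(u_\aA)_g] \circ \pi_\aA$.

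For the intertwining relation $\pi_\aA(b) t = t \pi_\aB(b)$, I would specialize Formula~\eqref{eq:multiplication_with_vectors} to $\sigma = \tau = \one$, exploiting the normalization $s(\one) = \one_\aA$, $\hH_\one = \C = \bar V_\one$, so that $\omega(\one, \one) = \one_\aA$ and $\gamma_\one(b) = b$. This collapses the right-hand side to $\pi_\aA(b)(\xi \tensor \one_\C) = \pi_\aB(b) \xi \tensor \one_\C = t \pi_\aB(b) \xi$, as required. The only mildly delicate point throughout the argument is the extension of $\pi_\aB \tensor \id$ from the corner to its multiplier algebra, but this becomes routine once the commutation $[ss^*, \one_\aA \tensor \mu_g] = 0$ is in hand; I do not expect any substantive obstacle beyond careful bookkeeping.
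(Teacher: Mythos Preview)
Your proposal is correct and follows essentially the same route as the paper: the theorem in the paper is stated as a summary of the preceding discussion, which constructs $\pi_\aA$ as the composition $(\pi_\aB \tensor \id) \circ \pi_s$ using Lemma~\ref{lem:pi_S} and Corollary~\ref{cor:pi_s}, and then obtains the intertwining relation by specializing Equation~\eqref{eq:multiplication_with_vectors} to the trivial representation. Your write-up is slightly more explicit about the $u_\aA$-invariance of $\hH_p$ and the faithful multiplier extension, but these are exactly the implicit steps the paper's terse discussion relies on.
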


\subsection{Lifting the Dirac operator}\label{sec:liftdirac}

We now turn to the construction of a Dirac operator, which is the only point remaining in our endeavour to establish a spectral triple  on $\aA$. 
The procedure naturally falls into three parts. 
First, we construct a ``horizontal'' lift of $D_\aB$ to an operator on~$\hH_p$. 
Second, we associate a ``vertical'' Dirac operator with the unitary representation $\mu: G \to \mathcal U(\hH)$. 
Finally, we put together the horizontal and the vertical part in a suitable way.

\subsubsection{The horizontal lift}\label{sec:horlift}
	
By the definition of the \Star-algebra $\aA_0$, for each $\sigma \in \Irrep(G)$ the operator $p(\sigma) := s(\sigma) s(\sigma)^*$ lies in $\aB_0 \tensor \End(\hH_\sigma)$. Hence Lemma~\ref{lem:resD} implies that the unbounded operator 
\begin{equation*}
	D_\sigma := p(\sigma) ( D_\aB \tensor \one_{\hH_\sigma} ) p(\sigma)
	\qquad
	\text{with domain}~\dom(D_\sigma) := p(\sigma) ( \dom(D_\aB) \tensor \hH_\sigma)
\end{equation*}
on the Hilbert space $\hH_{p(\sigma)} = p(\sigma)(\hH_\aB \tensor \hH_\sigma)$ is self-adjoint and has compact resolvent. In particular, we have $D_\one = D_\aB$. Passing over to the Hilbert space direct sum, we may conclude, from~\cite[Lem.~5.3.7]{Ped89} for instance, that there is a unique self-adjoint operator, let's say, $D_h$ on $\hH_p$ such that  $D_h \mid \dom( D_\sigma) \tensor \bar{V}_\sigma = D_\sigma \tensor \one_{\bar{V}_\sigma}$ for all $\sigma \in \Irrep(G)$. We now put all of this on record:

\begin{corollary}\label{cor:D_h}
	The following assertions hold for the unbounded operator $D_h$ on $\hH_p$:
	\begin{enumerate}[label={\arabic*}.,ref=\ref{lcor:D_h}.{\textit{\arabic*}}.]
	\item
		$D_h$ is a self-adjoint.\label{D_hsa}
	\item
		$D_h t = t D_\aB$ on $\dom(D)$, where $t:\hH_\aB \to \hH_p$ denotes the isometry from Theorem~\ref{thm:lift_cov_rep}.\label{D_hres}
	\item
		$(\one_\aB \tensor \mu_g)  D_h = D_h (\one_\aB \tensor \mu_g) $ for all $g \in G$.\label{D_mu}
	\end{enumerate}
\end{corollary}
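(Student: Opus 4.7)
The plan is to reduce everything to the summand decomposition of $D_h$ and to apply Lemma~\ref{lem:resD} to each isotypic summand. First I would note that, under~\eqref{eq:decH}, we have $ss^* = \bigoplus_{\sigma} p(\sigma) \tensor \one_{\bar V_\sigma}$, so
\[
	\hH_p = \bigoplus_{\sigma \in \Irrep(G)} \hH_{p(\sigma)} \tensor \bar V_\sigma
\]
and, by construction, $D_h$ is the Hilbert-space direct sum of the operators $D_\sigma \tensor \one_{\bar V_\sigma}$. Once this is in place, each of the three assertions becomes a statement about a single summand.

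For assertion~1, the task is to verify the hypotheses of Lemma~\ref{lem:resD} on each summand, with $D_\aB \tensor \one_{\hH_\sigma}$ in the role of the ambient self-adjoint operator and $p(\sigma)$ in the role of the projection. Since $\hH_\sigma$ is finite-dimensional, the standing assumption $p(\sigma) \in \aB_0 \tensor \End(\hH_\sigma)$ lets one express $p(\sigma)$ as a finite sum $\sum_i b_i \tensor T_i$ with $b_i \in \aB_0$ and $T_i \in \End(\hH_\sigma)$. The first tensor factor then ensures that $p(\sigma)$ preserves $\dom(D_\aB) \tensor \hH_\sigma$, while the spectral-triple axiom for $\DD_\aB$ yields
\[
	[D_\aB \tensor \one_{\hH_\sigma}, p(\sigma)] = \sum_i [D_\aB, b_i] \tensor T_i,
\]
which is bounded. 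Lemma~\ref{lem:resD} then delivers self-adjointness (and even compact resolvent) of each $D_\sigma$, and hence self-adjointness of the direct sum $D_h$.

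For assertion~2, I would specialize to the trivial representation: $s(\one) = \one_\aA$ forces $p(\one) = \one$ and $V_\one = \bar V_\one = \C$, so the corresponding summand is $\hH_\aB \tensor \C$ with $D_\one = D_\aB$. The isometry $t$ from Theorem~\ref{thm:lift_cov_rep} maps $\hH_\aB$ bijectively onto this summand, and so $D_h t = t D_\aB$ on $\dom(D_\aB)$ follows at once from the summand decomposition.

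Assertion~3 is then pure bookkeeping: on each summand $\hH_{p(\sigma)} \tensor \bar V_\sigma$ the operator $\one_\aB \tensor \mu_g$ acts as $\one_{\hH_{p(\sigma)}} \tensor \bar\sigma_g$ while $D_\sigma \tensor \one_{\bar V_\sigma}$ acts nontrivially only on the first factor, so the two commute summand-wise. The only genuinely nontrivial step in the whole argument is checking the bounded-commutator hypothesis of Lemma~\ref{lem:resD}, which is precisely why the standing assumptions on $\gamma_\sigma$ and $\omega(\sigma,\tau)$ were imposed before the corollary; once they are in force, the rest is direct-sum accounting.
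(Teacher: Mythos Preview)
Your proposal is correct and follows essentially the same approach as the paper: the paragraph immediately preceding the corollary already records that $p(\sigma)\in\aB_0\tensor\End(\hH_\sigma)$, applies Lemma~\ref{lem:resD} to obtain self-adjointness of each $D_\sigma$, notes $D_\one=D_\aB$, and assembles $D_h$ as the Hilbert-space direct sum (citing \cite[Lem.~5.3.7]{Ped89} for the self-adjointness of the sum), after which the corollary is merely ``put on record''. Your write-up spells out the bounded-commutator verification for Lemma~\ref{lem:resD} in slightly more detail, but the argument is the same; the only minor overreach is that the $\omega(\sigma,\tau)$ assumption is not actually used here---only $\gamma_\sigma(\aB_0)\subseteq\aB_0\tensor\End(\hH_\sigma)$ (applied to $\one_\aB$) is needed for $p(\sigma)\in\aB_0\tensor\End(\hH_\sigma)$.
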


The task is now to establish that $D_h$ has bounded commutators with $\aA_0$. 
For a start we notice that the span of vectors $\psi_\sigma(\xi \tensor \eta \tensor \bar v)$ for a finite-dimensional representation $\sigma$ of $G$, $\xi \in \dom(D_\aB)$, $\eta \in \hH_\sigma$, and $\bar v \in \bar V_\sigma$ lie in the domain of $D_h$ and that
\begin{equation}
	\label{eq:Dirac_on_vectors}
	D_h \psi_\sigma(\xi \tensor \eta \tensor \bar v)
	= \psi_\sigma \bigl( (D_\aB^{})_1 \, p(\sigma)_{12} \, (\xi \tensor \eta \tensor \bar v) \bigr),
\end{equation}
where the index refers to the leg numbering in $\hH_\aB \tensor \hH_\sigma \tensor \bar V_\sigma$. 
Moreover, by our standing assumptions, we have
\begin{gather}
	[D_\aB \tensor \one_{\hH_\sigma}, \gamma_\sigma(b)] \in \End(\hH_\aB \tensor \hH_\sigma) \qquad \forall b \in \aB_0,
	\label{eq:commD_sigma}
	\\
	 [D_\aB \tensor \one, \omega(\sigma,\tau)] \in \End(\hH_\aB \tensor \hH_\sigma \tensor \hH_\tau, \hH_\aB \tensor \hH_{\sigma \tensor \tau})
	 \label{eq:commD_sigma,pi}
\end{gather}
for all finite-dimensional representations $\sigma, \tau$ of $G$, where $[D_\aB \tensor \one, \omega(\sigma,\tau)]$ informally stands for the difference $(D_\aB \tensor \one_{\hH_{\sigma \tensor \tau}}) \omega(\sigma,\tau) - \omega(\sigma,\tau) (D_\aB \tensor \one_{\hH_{\sigma}} \tensor \one_{\hH_{\tau}})$. 

\begin{theorem}\label{thm:D_hcomm}
	Suppose that there are constants $C(b)>0$ and $C(\sigma)>0$ for all $b \in \aB_0$ and $\sigma \in \Irrep(G)$, respectively, such that
	\begin{gather*}
		\sup_{\tau \in \Irrep(G)} \norm[\big]{ [D_\aB \tensor \one_{\hH_\tau}, \gamma_\tau(b)] } < C(b) \qquad \forall b \in \aB_0,		
		\\
	\sup_{\tau \in \Irrep(G)} \norm[\big]{ [D_\aB \tensor \one, \omega(\sigma,\tau)] } < C(\sigma) \qquad \forall \sigma \in \Irrep(G).
	\end{gather*}
	Then $D_h$ has bounded commutators with $\aA_0$.
\end{theorem}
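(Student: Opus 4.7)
The plan is to reduce to generators of $\aA_0$, compute the commutator explicitly using the structural formulas of Section~\ref{sec:liftrep}, and then invoke the hypotheses term by term. By Remark~\ref{rem:A_0} and linearity it suffices to bound $[D_h, \pi_\aA(x)]$ for a single generator $x = a_\sigma(b \tensor \eta \tensor \bar v)$ with $\sigma \in \Irrep(G)$, $b \in \aB_0$, $\eta \in \hH_\sigma$, and $\bar v \in \bar V_\sigma$. The natural dense domain on which to work is the core
\begin{equation*}
	\alg D := \lin \bigl\{ \psi_\tau(\xi \tensor \vartheta \tensor \bar w) : \tau \in \Irrep(G),\ \xi \in \dom(D_\aB),\ \vartheta \in \hH_\tau,\ \bar w \in \bar V_\tau \bigr\},
\end{equation*}
which is dense in $\dom(D_h)$ and preserved by $\pi_\aA(x)$ thanks to~\eqref{eq:multiplication_with_vectors}.

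The computational heart would be a direct evaluation of $[D_h, \pi_\aA(x)] \psi_\tau(\xi \tensor \vartheta \tensor \bar w)$ obtained by inserting the formulas~\eqref{eq:multiplication_with_vectors} and~\eqref{eq:Dirac_on_vectors} into each side of the commutator. The projections sandwiching $(D_\aB)_1$ that arise from the definitions of $D_\sigma$ and $D_\tau$ should be absorbed using the identities $p(\sigma \tensor \tau)\, \omega(\sigma, \tau) = \omega(\sigma, \tau)$ and $\gamma_\tau(b)\, p(\tau) = \gamma_\tau(b)$, both of which follow from~\eqref{eq:ranges_sys} and the definitions $p(\tau) = s(\tau) s(\tau)^*$, $\omega(\sigma, \tau) = s(\sigma \tensor \tau) s(\sigma)^* s(\tau)^*$. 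After this cleanup, what remains is essentially the Leibniz expansion
\begin{equation*}
	[D_\aB \tensor \one, \, \omega(\sigma, \tau) \gamma_\tau(b)_{13}]
	= [D_\aB \tensor \one, \, \omega(\sigma, \tau)]\, \gamma_\tau(b)_{13}
	+ \omega(\sigma, \tau)\, [D_\aB \tensor \one, \, \gamma_\tau(b)]_{13},
\end{equation*}
together with a residual term involving $[D_\aB \tensor \one, p(\tau)]$ coming from the outer $p(\tau)$ in the definition of $D_\tau$.

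Invoking the two hypotheses, together with $p(\tau) = \gamma_\tau(\one_\aB)$ so that $\sup_\tau \norm{[D_\aB \tensor \one, p(\tau)]} \leq C(\one_\aB)$, and using that $\omega(\sigma, \tau)$ is a partial isometry while $\norm{\gamma_\tau(b)} \leq \norm{b}$, I would bound every summand in norm uniformly in $\tau$. Factoring out the insertions of $\eta$ and $\bar v$ and observing that $\norm{\psi_\tau(y)}$ equals the norm of $p(\tau)_{12}\, y$ (which is what the commutator actually sees), this yields
\begin{equation*}
	\norm[\big]{[D_h, \pi_\aA(x)] \psi_\tau(y)} \leq K(\sigma, b)\, \norm{\eta}\, \norm{\bar v}\, \norm[\big]{\psi_\tau(y)}
\end{equation*}
with a constant $K(\sigma, b)$ independent of $\tau$ and $y$. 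Since the images $\psi_{\sigma \tensor \tau}(\cdot)$ corresponding to distinct $\tau \in \Irrep(G)$ lie in mutually orthogonal isotypic components of $\hH_p$, this pointwise bound extends by density to a bounded operator on all of $\hH_p$.

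The main obstacle I anticipate is the leg-numbering bookkeeping in the middle step: one has to verify that every projection $p(\sigma)$, $p(\tau)$, $p(\sigma \tensor \tau)$ produced by the definition of $D_h$ and by the multiplication rule either cancels against $\omega(\sigma, \tau)$ via~\eqref{eq:ranges_sys} or leaves behind a commutator controlled by one of the two quantitative hypotheses. Once this bookkeeping is under control, the rest is a routine application of the triangle inequality and the Leibniz rule.
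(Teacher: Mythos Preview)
Your approach is the paper's: reduce to generators $a_\sigma(b\tensor\eta\tensor\bar v)$, compute the commutator via~\eqref{eq:multiplication_with_vectors} and~\eqref{eq:Dirac_on_vectors}, and absorb projections using~\eqref{eq:ranges_sys}. Two points need correction.

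First, there is no residual $[D_\aB\tensor\one, p(\tau)]$ term. Once you invoke $\gamma_\tau(b)\,p(\tau)=\gamma_\tau(b)$ together with $p(\sigma\tensor\tau)\,\omega(\sigma,\tau)=\omega(\sigma,\tau)$, the two sides subtract to exactly
\[
	[D_h, \pi_\aA(a_\sigma(x))]\,\psi_\tau(y)
	= \psi_{\sigma\tensor\tau}\bigl([D_\aB\tensor\one,\ \omega(\sigma,\tau)\,\gamma_\tau(b)_{13}]\; p(\tau)_{13}\; (\cdots)\bigr),
\]
with no leftover commutator in $p(\tau)$. Your extra term is harmless since you bound it anyway, but it is simply absent.

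Second, and this is a genuine gap, your orthogonality claim in the final step is false. For distinct $\tau,\tau'\in\Irrep(G)$ the tensor products $\sigma\tensor\tau$ and $\sigma\tensor\tau'$ can share irreducible constituents (take $G=\SU(2)$, $\sigma$ the fundamental, $\tau$ trivial, $\tau'$ the adjoint: both products contain $\sigma$). Hence the images $\psi_{\sigma\tensor\tau}(\cdot)$ and $\psi_{\sigma\tensor\tau'}(\cdot)$ need not lie in orthogonal isotypic components, and the pointwise estimate on each $\psi_\tau(y)$ does not automatically sum.

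The paper avoids this by letting $\tau$ range over arbitrary \emph{finite-dimensional} representations rather than just irreducibles. Any vector in your core $\alg D$ is then a single $\psi_\tau(y)$ with $\tau=\bigoplus_j\tau_j$, and the estimate applies directly to it. This is legitimate because $\omega(\sigma,\tau)$ and $\gamma_\tau(b)$ are direct sums over the irreducible pieces of $\tau$, so the uniform bounds in the hypotheses (stated for $\tau\in\Irrep(G)$) persist for all finite-dimensional $\tau$ with the same constants. With this one change your argument is complete.
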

\begin{proof}
	It clearly suffices to demonstrate that $D_h$ has a bounded commutator with each operator $\pi_\aA \bigl( a_\sigma(x) \bigr)$ for a finite-dimensional representation $\sigma$ of $G$ and $x = b \tensor \eta \tensor \bar v$ with $b \in \aB_0$, $\eta \in \hH_\sigma$, and $\bar v \in \bar V_\sigma$. 
	Hence let us consider such an operator and, in addition, a vector $\psi_\tau(y)$ for some finite-dimensional representation $\tau$ of $G$ and 
	\begin{align*}
		y = \sum_{i \in I} \xi_i \tensor \zeta_i \tensor \bar w_i \in \dom(D_\aB) \tensor \hH_\tau \tensor \bar V_\tau.
	\end{align*}
	Then the Equations~\eqref{eq:multiplication_with_vectors} and~\eqref{eq:Dirac_on_vectors} together with the identity $p(\sigma \tensor \tau) \omega(\sigma, \tau) = \omega(\sigma, \tau)$ from Equation~\eqref{eq:ranges_sys} imply
	\begin{align*}
		D_h \pi_\aA \bigl( a_\sigma(x) \bigr) \psi_\tau(y) 
		&= \sum_{i \in I} \psi_{\sigma \tensor \tau} \bigl( (D_\aB)_1 \, \omega(\sigma, \tau)_{123} \, \gamma_\tau(b)_{13} \, (\xi_i \tensor \eta \tensor \zeta_i \tensor \bar v \tensor \bar w_i) \bigr)
		\shortintertext{and}
		\pi_\aA \bigl( a_\sigma(x) \bigr) D_h \psi_\tau(y) 
		&= \sum_{i \in I} \psi_{\sigma \tensor \tau} \bigl( \omega(\sigma,\tau)_{123} \, \gamma_\tau(b)_{13} \, (D_\aB)_1 \, p(\tau)_{13} \, (\xi_i \tensor \eta \tensor \zeta_i \tensor \bar v \tensor \bar w_i) \bigr).
	\end{align*}
	For the commutator it follows that
	\begin{equation*}
		\bigl[ D_h, \pi_\aA \bigl( a_\sigma(x) \bigr) \bigr] \psi_\tau(y) 
		= \sum_{i \in I} \psi_{\sigma \tensor \tau} \bigl( [ D_\aB \tensor \one, \omega(\sigma, \tau) \gamma_\tau(b)_{13}] \, p(\tau)_{13} \, (\xi_i \tensor \eta \tensor \zeta_i \tensor \bar v \tensor \bar w_i)  \bigr).
	\end{equation*}
	By the hypothesis, the operator $[D_\aB \tensor \one, \omega(\sigma, \tau) \gamma_\tau(b)_{13}]$ is bounded in~$\tau$ by some constant that only depends on $\sigma$ and $b$, let's say $C(\sigma,b)$, and, in consequence, 
	\begin{align*}
		\norm[\big]{ \bigl[ D_h, \pi_\aA \bigl( a_\sigma(x) \bigr) \bigr] \psi_\tau(y) } &\le \norm[\big]{ [D_\aB \tensor \one, \omega(\sigma,\tau) \gamma_\tau(b)_{13}] } 
		\cdot \norm[\Big]{ \sum_{i \in I} p(\tau)_{13} (\xi_i \tensor \eta \tensor \zeta_i \tensor \bar v \tensor \bar w_i) }
		\\
		&\le C(\sigma,b) \cdot \norm{\eta \tensor \bar v} \cdot \norm{\psi_\tau(y)}.
	\end{align*}
	Therefore, $[D_h, \pi_\aA \bigl( a_\sigma(x) \bigr)]$ is bounded, because the vectors $\psi_\tau(y)$ for a finite-dimensional representation $\tau$ of $G$ and $y \in \dom(D) \tensor \hH_\tau \tensor \bar V_\tau$ are dense in~$\hH_p$. 
\end{proof}

\begin{remark}
	The condition that the operators in Equation~\eqref{eq:commD_sigma} are uniformly bounded is closely related to the notion of a so-called  equicontinuous group action on a spectral metric space, \cite[Def.~3]{BeMaRe10} (see Section~\ref{sec:crossprod} below).
\end{remark}
 
\begin{remark}
	The aim of this remark is to show that the commutators in Equation~\eqref{eq:commD_sigma,pi} do not vanish in general. 
	To this end, we recall a C\Star-algebraic version of the nontrivial Hopf-Galois extensions studied in~\cite{LaSu05} (see also~\cite{CoLa01}). 
	Let $\theta \in \R$ and let $\theta'$ denote the skewsymmetric $4 \times 4$-matrix with $\theta_{1,2}' = \theta_{3,4}' = 0$ and $\theta'_{1,3} = \theta'_{1,4} = \theta_{2,3}' = \theta'_{2,4} = \theta/2$. 
	The Connes-Landi sphere $\aA(\mathbb S_{\theta'}^7)$ is the universal unital C\Star-algebra generated by normal elements $z_1, \dots, z_4$ subject to the relations
	\begin{align*}	
		z_i z_j &= e^{2\pi\imath \theta'_{i,j}} \; z_j z_i, 
		&
		z_j^* z_i &= e^{2\pi \imath \theta'_{i,j}}\;  z_i z_j^*,
		&
		\sum_{k=1}^4 z_k^* z_k^{} &= \one
	\end{align*}
	for all $1 \le i,j \le 4$. 
	On account of \cite[Expl.~3.5]{SchWa17}, it comes equipped with a free action of the non-Abelian group $G =\SU(2)$ given for each $U \in \SU(2)$ on generators by
	\begin{equation*}
		\alpha_U: (z_1, \dots, z_4) \mapsto (z_1, \dots, z_4) \begin{pmatrix} U & 0 \\ 0 & U \end{pmatrix}.
	\end{equation*}
	The corresponding fixed point algebra is the universal unital C\Star-algebra $\aA(\mathbb S_\theta^4)$ generated by normal elements $w_1, w_2$ and a self-adjoint element $x$ satisfying
	\begin{align*}
		w_1 w_2 &= e^{2\pi\imath\theta} \; w_2 w_1, 
		&
		w_2^* w_1 &= e^{2\pi\imath \theta} \; w_1 w_2^*, 
		&
		w_1^* w_1 + w_2^* w_2 + x^*x &= \one.
	\end{align*}
	Moreover, for the fundamental representation $(\sigma_1,\mathbb{C}^2)$ of $\SU(2)$ the element
	\begin{equation*}
		s(\sigma_1) := \begin{pmatrix}
			z_1^* & z_2^*
			\\
			-z_2 & z_1
			\\
			z_3^* & z_4^*
			\\
			-z_4 & z_3
		\end{pmatrix} \in \aA(\mathbb S_{\theta'}^7) \tensor \End(\C^2,\C^4)
	\end{equation*}
	is an isometry satisfying $\alpha_U\bigl(s(\sigma_1)\bigr) = s(\sigma_1)(\one_\aA \tensor U)$ for all $U \in \SU(2)$ and, in consequence, an easy computation with $z_\theta := e^{\pi\imath \theta}$ gives
	\begin{align*}
		p(\sigma_1) := s(\sigma_1)s^*(\sigma_1) = \frac{1}{2} \begin{pmatrix}
									1 + x & 0 & w_1 & w_2
									\\
									0 & 1 + x & -z_\theta w_2^* & \bar{z}_\theta w_1^*
									\\
									\bar{w}_1 & -\bar{z}_\theta w_2 & 1 - x & 0
									\\
									\bar{w}_2 & z_\theta w_1 & 0 & 1 - x
		\end{pmatrix}.
	\end{align*}
	For a Dirac operator on $\alg A(\mathbb S_\theta^4)$ one has the following construction. 
	Let $D$ be the classical Dirac operator on $\mathbb{S}^4$, that is, the unbounded self-adjoint operator on the space $L^2(\mathbb{S}^4,\mathcal{S})$ of square integrable spinors on $\mathbb{S}^4$. 
	Furthermore, let $L^2(\mathbb A^2_{\smash{\theta/2}})$ be the GNS space of the quantum 2-torus $\mathbb A^2_{\smash{\theta/2}}$ with respect to its tracial state (\cf Section~\ref{sec:QT}) and consider the essentially self-adjoint operator $D \tensor \one$ on $L^2(\mathbb{S}^4,\mathcal{S}) \tensor L^2(\mathbb A^2_{\smash{\theta/2}})$ with domain $\dom(D \tensor \one) := \dom(D) \tensor_{\text{alg}} L^2(\mathbb A^2_{\smash{\theta/2}})$. 
	The latter Hilbert space comes equipped with a natural action of the classical $\mathbb{T}^2$-torus and the restriction $D_\theta$ of the self-adjoint extension of $D \tensor \one$ to the corresponding fixed point algebra $L^2(\mathbb{S}_\theta^4,\mathcal{S})$ is the Dirac operator on $\alg A(\mathbb S_\theta^4)$. 
	It is now a consequence of~\cite[Sec.~4]{LaSu05} that the commutator $[D_\theta \tensor \one_{\C^4}, \omega(\sigma_1,\one)] = [D_\theta \tensor \one_{\C^4}, p(\sigma_1)]$ does not vanish.
\end{remark}

Summarizing, we have seen that the operator $D_h$ has a number of good properties and is almost a Dirac operator for our noncommutative principal bundle $(\aA,G,\alpha)$. 
In fact, one missing feature for $D_h$ to be a Dirac operator on $\aA$ is that it does not incorporate the geometry of the ``fibre'' $G$. 
Another one is that $D_h$ does in general not need to have compact resolvent.
In order to resolve these issues, we will add a vertical term, whose construction is the concern of the upcoming section.

\subsubsection{The vertical Dirac operator}\label{sec:verlift}

In this section we associate a vertical Dirac operator with the free C\Star-dynamical system $(\aA,G,\alpha)$. 
For this purpose, we endow $\Lie(G)$ with an $\Ad$-invariant inner product, consider a finite-dimensional \Star-representation $\pi_\spin: \CL(G) \to \End(\hH_\spin)$ of the Clifford algebra $\CL(G)$ of $\Lie(G)$, and put $F_X := \pi_\spin(X)$ for each $X \in \Lie(G)$. 

For the construction of a Dirac operator, we proceed similar to~\cite{GaGr16}. 
In greater detail, we again consider the unitary representation $\mu: G \to \mathcal U(\hH)$ and recall from Section~\ref{sec:unbounded} that for any orthonormal basis $X_1, \dots, X_n$ of $\Lie(G)$ the unbounded operator 
\begin{align*}
	D_v := \sum_{k=1}^n \partial_{X_k} \mu \tensor F_{X_k}
	\label{eq:Dv}
\end{align*}
on $\hH \tensor \hH_\text{spin}$ with domain $\dom(D_v) := \hH^\infty \tensor \hH_\text{spin}$ is essentially self-adjoint. 
For simplicity of notation, we utilize the same letter for its unique self-adjoint extension. Because the unitary representation $\mu: G \to \mathcal U(\hH)$ has finite-dimensional multiplicity spaces, $D_v$ has compact resolvent by Lemma~\ref{lem:comres2}. Throughout the remainder of this article, we refer to $D_v$ as the \emph{vertical Dirac operator} associated with $(\aA,G,\alpha)$.

Next, let us look at the unbounded operator associated with the unitary representations $u_\aA : G \to \Unitary(\hH_p)$, which is the self-adjoint extension of 
\begin{align*}
	\hat{D}_v := \sum_{k=1}^n \partial_{X_k} u_\aA \tensor F_{X_k}
\end{align*}
on $\hH_p \tensor \hH_\text{spin}$ with domain $\dom(\hat{D}_v) := \hH_p^\infty \tensor \hH_\spin$.
Furthermore, for each $X \in \Lie(G)$, we have $[\partial_X u_\aA, \pi_\aA(x)] = \partial_X \pi_\aA \bigl(\alpha(x)\bigr)$ for all $x \in \aA^\infty$. 
	
\begin{corollary}[\cf~{\cite[Prop.~2.10]{GaGr13}}]\label{cor:vertop}
	The following assertions hold for the unbounded operator $\hat{D}_v$ on  $\hH_p \tensor \hH_\text{spin}$:
	\begin{enumerate}
		\item
			$\hat{D}_v$ is self-adjoint and $\hat{D}_v = \one_{\hH_\aB} \tensor D_v$ on $\hH_p^\infty \tensor \hH_\spin$.
		\item
			$[\hat{D}_v, \pi_\aA(x)]$ is bounded for all $x \in \aA^\infty$. In greater detail, for each $x \in \aA^\infty$ we have $\pi_\aA(x) \bigl( \dom(\hat{D}_v) \bigr) \subseteq \dom(\hat{D}_v)$ and $[\hat{D}_v, \pi_\aA(x)] = \sum_{k=1}^n \partial_{X_k} \pi_\aA \bigl(\alpha(x)\bigr) \tensor F_{X_k}$.
	\end{enumerate}
\end{corollary}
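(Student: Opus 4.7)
The plan is to handle the two assertions separately and in each case reduce to the corresponding statement for the ambient representation $\one_\aB \tensor \mu$ on $\hH_\aB \tensor \hH$. The key preparatory observation is that equation~\eqref{eq:SOPcommuting} together with the definition $p = \pi_\aB \tensor \id(ss^*)$ ensures that $p$ commutes with $\one_{\hH_\aB} \tensor \mu_g$ for every $g \in G$. Hence the restriction $u_\aA = (\one_\aB \tensor \mu)|_{\hH_p}$ is itself a continuous unitary representation of $G$, and on smooth vectors the generator satisfies $\partial_X u_\aA = (\one_{\hH_\aB} \tensor \partial_X \mu)|_{\hH_p^\infty}$ for each $X \in \Lie(G)$.

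For assertion~1 I would then invoke \cite[Prop.~4.1]{GaGr16}, already used in Section~\ref{sec:unbounded}, applied now directly to $u_\aA$: this yields essential self-adjointness of $\hat{D}_v$ on the core $\hH_p^\infty \tensor \hH_\spin$. Summing the preparatory identity across the orthonormal basis $X_1, \dots, X_n$ of $\Lie(G)$ gives
\begin{equation*}
	\hat{D}_v \eta = (\one_{\hH_\aB} \tensor D_v)\eta \qquad \text{for all } \eta \in \hH_p^\infty \tensor \hH_\spin,
\end{equation*}
which is the claimed identification on the essential domain.

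For assertion~2 the first step is to verify that $\pi_\aA(x)$ preserves $\hH_p^\infty$ for $x \in \aA^\infty$. Using covariance one writes $u_\aA(g) \pi_\aA(x) \eta = \pi_\aA(\alpha_g(x)) u_\aA(g) \eta$; since $g \mapsto \alpha_g(x)$ is smooth in norm and $g \mapsto u_\aA(g)\eta$ is smooth in $\hH_p$, the product is a smooth $\hH_p$-valued function of $g$, whence $\pi_\aA(x) \eta \in \hH_p^\infty$. Consequently $\pi_\aA(x) \tensor \one_{\hH_\spin}$ preserves $\dom(\hat{D}_v)$. On this core the commutator splits across the sum in $k$, and the identity $[\partial_X u_\aA, \pi_\aA(x)] = \partial_X \pi_\aA(\alpha(x))$ recalled just before the corollary gives the stated formula; boundedness is automatic because each factor $\partial_{X_k} \pi_\aA(\alpha(x))$ is the image under $\pi_\aA$ of an element of $\aA^\infty$ and each $F_{X_k}$ acts on the finite-dimensional space $\hH_\spin$. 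The only point requiring a small amount of care, which I expect to be the mild obstacle, is the consistent bookkeeping of smooth vectors when compressing by $p$; once one confirms that $p$ commutes with the one-parameter groups $t \mapsto \one_{\hH_\aB} \tensor \mu_{\exp(tX_k)}$ and hence with their generators, everything else proceeds in parallel with \cite[Prop.~2.10]{GaGr13}.
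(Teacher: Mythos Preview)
Your proposal is correct. Note that the paper itself does not supply a proof of this corollary: it is stated with a reference to \cite[Prop.~2.10]{GaGr13} and then used immediately, so there is no argument in the text to compare against beyond that citation. Your write-up is precisely the kind of unpacking of that reference one would expect, and it uses exactly the ingredients the paper has already put in place (the commutation of $p$ with $\one_{\hH_\aB}\tensor\mu_g$ from \eqref{eq:SOPcommuting}, the essential self-adjointness result \cite[Prop.~4.1]{GaGr16} invoked in Section~\ref{sec:unbounded}, and the derivative identity $[\partial_X u_\aA,\pi_\aA(x)]=\partial_X\pi_\aA(\alpha(x))$ recorded just before the corollary). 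The one point you flag as requiring care---that $p$ commutes with the one-parameter groups and hence $\hH_p^\infty = \hH_p \cap (\hH_\aB \tensor \hH)^\infty$---is indeed routine once observed, so there is no genuine obstacle.
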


We note that $\hat{D}_v = 0$ on $\hH_\aB \tensor \hH_\spin \subseteq \hH_p^\infty \tensor \hH_\spin$ and that $[\hat{D}_v, \pi_\aA(b)] = 0$ for all $b \in \aB$, which may justify the expression ``vertical''.

\pagebreak[3]
\subsubsection{The assembled Dirac operator}\label{sec:asslift}

We are finally in a position to present a spectral triple on $\aA$ that lifts the spectral triple $\DD_\aB$ and incorporates the geometry of $G$. 
Let $D_h$ be the horizontal lift of $D_\aB$ to $\hH_p$ from  Section~\ref{sec:horlift} and let $D_v$ be the vertical Dirac operator on $\hH \tensor \hH_\spin$ as introduced in the preceding section. 
Additionally, let $\gamma_\spin \in \End(\hH_\spin)$ be a self-adjoint operator satisfying $\gamma_\spin^2 = \one_{\hH_\spin}$ and $D_v (\one_\hH \tensor \gamma_\spin) = - (\one_\hH \tensor \gamma_\spin) D_v$. 
Such an operator can always be found by increasing $\hH_\spin$ if necessary. 
Then we may look at the symmetric operator
\begin{gather*}
	D_{\text{ref}} := D_\aB \tensor \one_{\hH} \tensor \gamma_\spin + \one_{\hH_\aB} \tensor D_v 
	\\
	\shortintertext{with domain}
	\dom(D_{\text{ref}}) := \dom(D_\aB) \tensor_{\text{alg}}  \dom(D_v) \subseteq \hH_\aB \tensor \hH \tensor \hH_\spin.
\end{gather*}

According to \cite[Sec.~4]{DaDo11}, its closure $\bar{D}_{\text{ref}}$ is self-adjoint with pure point spectrum consisting of countably many real eigenvalues, each with finite multiplicity, and the only limit point of their absolute values is given by $+\infty$. In particular, $\bar{D}_{\text{ref}}$ has compact resolvent.  Applying Lemma~\ref{lem:resD} and combining Theorem~\ref{thm:D_hcomm} with Corollary~\ref{cor:vertop}, we get the main results of this paper:

\begin{theorem}\label{thm:main}
	For the operator 
	\begin{equation}
		D_\aA := p \bar{D}_{\text{ref}} \, p = D_h \tensor \gamma_\spin + p(\one_{\hH_\aB} \tensor D_v)p = D_h \tensor \gamma_\spin + \hat{D}_v\label{eq:hatD}
	\end{equation}
	with domain $p \dom(\bar{D}_{\text{ref}})$ on $\hH_\aA := p(\hH_\aB \tensor \hH \tensor \hH_\spin) $ the following assertions hold:
	\begin{enumerate}
		\item
			$D_\aA$ is self-adjoint and has compact resolvent. 
		\item
			Under the hypotheses of Theorem~\ref{thm:D_hcomm}, the commutator $[D_\aA,\pi_\aA(x)]$ is bounded for all $x \in \aA_0$, where $\pi_\aA$ is understood to be amplified in the obvious way.
	\end{enumerate}
\end{theorem}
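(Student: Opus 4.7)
The strategy is to apply Lemma~\ref{lem:resD} to the reference operator $\bar{D}_{\text{ref}}$ together with the orthogonal projection $P := p \tensor \one_{\hH_\spin}$ on $\hH_\aB \tensor \hH \tensor \hH_\spin$, whose range equals $\hH_\aA$. The result cited from~\cite{DaDo11} gives that $\bar{D}_{\text{ref}}$ is self-adjoint with compact resolvent, so once the two hypotheses of Lemma~\ref{lem:resD} are verified, both claims of assertion~1 fall out immediately as $D_\aA = P \bar{D}_{\text{ref}} P$.

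For the bounded-commutator condition I would split $\bar{D}_{\text{ref}}$ into its two summands. On the vertical side, Equation~\eqref{eq:SOPcommuting} ensures that $p$ commutes with $\one_{\hH_\aB} \tensor \mu_g$ for every $g \in G$, hence with each infinitesimal generator $\one_{\hH_\aB} \tensor \partial_{X_k} \mu$, and therefore $P$ commutes with $\one_{\hH_\aB} \tensor D_v$. Thus the commutator reduces to
\begin{equation*}
	[\bar{D}_{\text{ref}}, P] = [D_\aB \tensor \one_\hH, p] \tensor \gamma_\spin.
\end{equation*}
Decomposing $p = \bigoplus_{\sigma \in \Irrep(G)} p(\sigma) \tensor \one_{\bar V_\sigma}$ and using that $p(\sigma) = \gamma_\sigma(\one_\aB)$, the standing hypothesis of Theorem~\ref{thm:D_hcomm} with $b = \one_\aB$ supplies a uniform bound in $\sigma$, so $[\bar{D}_{\text{ref}}, P]$ is bounded. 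The inclusion $P \dom(\bar{D}_{\text{ref}}) \subseteq \dom(\bar{D}_{\text{ref}})$ then follows by a standard graph-norm argument from self-adjointness of $\bar{D}_{\text{ref}}$ together with boundedness of $P$ and of the commutator on a core. Lemma~\ref{lem:resD} now yields that $D_\aA$ is self-adjoint with compact resolvent. The alternative formula in Equation~\eqref{eq:hatD} is immediate: $P (D_\aB \tensor \one_\hH \tensor \gamma_\spin) P = D_h \tensor \gamma_\spin$ by the very definition of $D_h$, while $P (\one_{\hH_\aB} \tensor D_v) P = (\one_{\hH_\aB} \tensor D_v) P = \hat{D}_v$ thanks to the commutation just established.

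For assertion~2, I would exploit this decomposition. Writing $\pi_\aA(x)$ amplified to $\pi_\aA(x) \tensor \one_{\hH_\spin}$ on $\hH_\aA = \hH_p \tensor \hH_\spin$, one computes
\begin{equation*}
	[D_\aA, \pi_\aA(x) \tensor \one_{\hH_\spin}] = [D_h, \pi_\aA(x)] \tensor \gamma_\spin + [\hat{D}_v, \pi_\aA(x) \tensor \one_{\hH_\spin}].
\end{equation*}
The first summand is bounded by Theorem~\ref{thm:D_hcomm}, while the second is bounded by Corollary~\ref{cor:vertop}, since $\aA_0 \subseteq \aA^\infty$ by Remark~\ref{rem:A_0}.

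The main technical obstacle is the uniform bound on $\{\norm{[D_\aB \tensor \one_{\hH_\sigma}, p(\sigma)]}\}_{\sigma \in \Irrep(G)}$: this is not automatic from the abstract spectral triple axioms, but is exactly the content of the hypothesis of Theorem~\ref{thm:D_hcomm} specialised to $b = \one_\aB$. Both parts of the theorem therefore silently rely on that assumption; once it is in hand, everything else is bookkeeping with Lemma~\ref{lem:resD} and the horizontal/vertical split.
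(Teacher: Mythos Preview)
Your proposal is correct and follows precisely the route indicated by the paper, whose entire argument is the one-line ``Applying Lemma~\ref{lem:resD} and combining Theorem~\ref{thm:D_hcomm} with Corollary~\ref{cor:vertop}''. You have simply spelled out the details: the global application of Lemma~\ref{lem:resD} to $\bar D_{\text{ref}}$ with the projection $P = p \tensor \one_{\hH_\spin}$, the splitting of $[\bar D_{\text{ref}},P]$ into a vanishing vertical part and a horizontal part governed by $\bigoplus_\sigma [D_\aB \tensor \one_{\hH_\sigma}, p(\sigma)]$, and the decomposition of $[D_\aA,\pi_\aA(x)]$ into the two summands handled by Theorem~\ref{thm:D_hcomm} and Corollary~\ref{cor:vertop}.

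Your closing remark is a genuine observation that the paper glosses over: for Lemma~\ref{lem:resD} to apply globally one needs $\sup_\sigma \norm{[D_\aB \tensor \one_{\hH_\sigma}, p(\sigma)]} < \infty$, and since $p(\sigma) = \gamma_\sigma(\one_\aB)$ this is exactly the first hypothesis of Theorem~\ref{thm:D_hcomm} at $b = \one_\aB$. The statement of part~1 does not list this assumption, but the proof via Lemma~\ref{lem:resD} uses it. (One could in principle avoid it for part~1 by applying Lemma~\ref{lem:resD} on each isotypic component separately---where the commutator is bounded by the standing assumption $p(\sigma) \in \aB_0 \tensor \End(\hH_\sigma)$---and then using the anticommutation $\hat D_h \hat D_v + \hat D_v \hat D_h = 0$ together with compact resolvent of $D_v$ to control the direct sum; but this is not what the paper does, and your reading of the intended argument is accurate.)
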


We occasionally write $\hat{D}_h := D_h \tensor \gamma_\spin$ for the horizontal part of $D_\aA$ if no confusion regarding the operator $\gamma_\spin$ can arise. Equation~\eqref{eq:hatD} then reads as $D_\aA = \hat{D}_h + \hat{D}_v$.
	
\begin{remark}\label{rem:main2}
	 Suppose the spectral triple $\DD_\aB$ is even, which amounts to saying that there is a self-adjoint unitary operator $\gamma_\aB$ on $\hH_\aB$ such that $\gamma_\aB \pi_\aB(b) = \pi_\aB(b) \gamma_\aB$ for all $b \in \aB$ and $\gamma_\aB D_\aB = - D_\aB \gamma_\aB$. Then one may work with the operator 
	\begin{align*}
		D'_{\text{ref}} := D_\aB \tensor \one_{\hH \tensor \hH_\spin} + \gamma_\aB \tensor D_v
	\end{align*}
	with domain $\dom( D'_{\text{ref}} ) := \dom(D_\aB) \tensor_{\text{alg}}  \dom(D_v) \subseteq \hH_\aB \tensor \hH \tensor \hH_\spin$. All arguments of this article considering $D_{\text{ref}}$ likewise apply to $D'_{\text{ref}}$.
\end{remark}

\begin{remark}
 	Theorem~\ref{thm:main} has a clear meaning in unbounded $KK$-theory as was kindly pointed out to us by van Suijlekom. 
 	Indeed, our Dirac operator $D_\aA$ in Equation~\eqref{eq:hatD} is an example of the Kasparov product~\cite{KaLesch13,Mes16} of the unbounded Kasparov $\aA$-$\aB$-module $(\aA_0,\pi_\aA,\hH_p,D_v)$ with the spectral triple $\DD_\aB$ and the $D_\aB$-connection induced by $D_h$. 
\end{remark}

\begin{corollary}\label{cor:main}
	$\DD_\aA := (\aA_0,\pi_\aA,\hH_\aA,D_\aA)$ is a spectral triple on $\aA$ that lifts $\DD_\aB$ in the sense of Definition~\ref{def:lift} \wrt the isometry $t: \hH_\aB \to \hH_\aA$, $t(\xi) := \xi \tensor \one_{\hH_\spin} \tensor \one_{\C^2}$.
\end{corollary}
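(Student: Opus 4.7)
The plan is to check that all four defining properties of a spectral triple hold for $\DD_\aA$ and that the two conditions of Definition~\ref{def:lift} are satisfied for the map $t$. Most of the ingredients have been prepared above, so the proof should be essentially an assembly argument. By Theorem~\ref{thm:A_0} the set $\aA_0$ is a dense unital \Star-subalgebra of $\aA$, and by Theorem~\ref{thm:lift_cov_rep} the map $\pi_\aA : \aA \to \End(\hH_p)$ is a faithful \Star-representation; its amplification to $\hH_\aA \subseteq \hH_p \tensor \hH_\spin$ given by $\pi_\aA(\acts) \tensor \one_{\hH_\spin}$ remains faithful and takes values in $\End(\hH_\aA)$, since $p$ commutes with $\pi_\aA$ (as $p = \pi_\aB \tensor \id(ss^*)$ is the projection onto the trivial isotypic component on the second factor, which $\pi_\aA$ leaves invariant). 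Theorem~\ref{thm:main}~(1) gives self-adjointness and compact resolvent of $D_\aA$, while Theorem~\ref{thm:main}~(2) provides the boundedness of the commutators $[D_\aA,\pi_\aA(x)]$ for all $x \in \aA_0$.

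It remains to verify the invariance of the domain, $\pi_\aA(\aA_0) \dom(D_\aA) \subseteq \dom(D_\aA)$, as well as the two lifting conditions. The invariance is a standard consequence of self-adjointness together with the boundedness of the commutators: if $\xi \in \dom(D_\aA)$ and $x \in \aA_0$, then $D_\aA \pi_\aA(x) \xi = \pi_\aA(x) D_\aA \xi + [D_\aA,\pi_\aA(x)] \xi$ is well-defined as an element of $\hH_\aA$, which by self-adjointness of $D_\aA$ implies $\pi_\aA(x) \xi \in \dom(D_\aA)$. The first lifting condition, $\pi_\aA(b) t = t \pi_\aB(b)$ for all $b \in \aB_0$, is immediate from Theorem~\ref{thm:lift_cov_rep}, where the analogous statement is established at the level of $\hH_p$; tensoring with $\one_{\hH_\spin}$ (and restricting in the spinor direction) yields the claim on $\hH_\aA$.

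The heart of the verification is the compatibility $D_\aA t = t D_\aB$ on $\dom(D_\aB)$. By the definition of $t$ the image $t(\hH_\aB)$ sits inside the trivial isotypic component $\hH_\aB \tensor \hH_\one \tensor \hH_\spin = \hH_\aB \tensor \C \tensor \hH_\spin$, on which the generators $\partial_X \mu$ of the representation $\mu$ vanish for every $X \in \Lie(G)$. In view of Corollary~\ref{cor:vertop} this means $\hat D_v t(\xi) = 0$ for $\xi \in \hH_\aB$. Moreover, in the decomposition of $D_h$ as the direct sum of the operators $D_\sigma \tensor \one_{\bar V_\sigma}$, the piece corresponding to $\sigma = \one$ is $D_\one = D_\aB$ by Corollary~\ref{cor:D_h}, and $\gamma_\spin$ acts as $+1$ on the chosen spinor direction in $t$. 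Putting the two together gives $D_\aA t(\xi) = \hat D_h t(\xi) + \hat D_v t(\xi) = t(D_\aB \xi)$ for $\xi \in \dom(D_\aB)$, which also shows $t \bigl( \dom(D_\aB) \bigr) \subseteq \dom(D_\aA)$.

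The main point one has to be careful with is precisely this last computation: one needs to check that the distinguished spinor direction used in the definition of $t$ really is a $+1$-eigenvector of $\gamma_\spin$ and lies in $\dom(D_v)$, so that the vertical and horizontal contributions behave as claimed. Once this bookkeeping is in place, the statement is just a combination of the previous results.
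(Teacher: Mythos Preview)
Your assembly argument matches the paper's intended approach: the corollary is stated without proof and is meant to follow directly from Theorem~\ref{thm:main} together with the earlier preparatory results. Two points deserve comment.

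First, your domain-invariance argument is circular as written: you cannot form $D_\aA \pi_\aA(x)\xi$ before knowing that $\pi_\aA(x)\xi \in \dom(D_\aA)$, which is precisely what you want to conclude. The standard repair is to work on a core---here the algebraic span of the vectors $\psi_\tau(y) \tensor w$ with $y \in \dom(D_\aB) \tensor \hH_\tau \tensor \bar V_\tau$ and $w \in \hH_\spin$---on which both $\pi_\aA(\aA_0)$-invariance and boundedness of the commutator are established directly in the proofs of Theorem~\ref{thm:D_hcomm} and Corollary~\ref{cor:vertop}; then invoke closedness of $D_\aA$ to pass to the full domain.

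Second, you correctly isolate the crux of the lifting condition $D_\aA t = t D_\aB$: the spinor leg of $t$ must be a $+1$-eigenvector of $\gamma_\spin$, so that $\hat D_h t(\xi) = t(D_\aB\xi)$ rather than $D_\aB\xi \tensor 1_\C \tensor \gamma_\spin v$. You flag this but do not settle it, and indeed the paper's own formula for $t$ is not entirely transparent on this point. What is actually needed is a unit vector $v \in \hH_\spin$ with $\gamma_\spin v = v$; such a vector exists as soon as $\gamma_\spin \neq -\one_{\hH_\spin}$, which is forced by the anticommutation relation $D_v(\one_\hH \tensor \gamma_\spin) = -(\one_\hH \tensor \gamma_\spin)D_v$ whenever $D_v \neq 0$. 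With $t(\xi) := \xi \tensor 1_\C \tensor v$ (where $1_\C \in \hH_1 \subseteq \hH$) your computation goes through verbatim.
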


\begin{remark}
	At this point we briefly resume the discussion in Remark~\ref{rmk:strong_lift}. Indeed, a moment's thought shows that $\Omega^1(\aB_0)$ embeds into $\Omega^1(\aA_0)$ if, for instance, $\gamma_\sigma(b) = b \tensor \one$ for all $\sigma \in \Irrep(G)$ and $b \in \aB_0$ (\cf Equation~\eqref{eq:multiplication_with_vectors}). This is, clearly, guaranteed for classical principal bundles, but also in some noncommutative situations (see, \eg,~\cite[Sec.~5]{SchWa15}).
\end{remark}

In the following sections we investigate how the lift constructed here compares to established examples.

\section{Example: Crossed products}\label{sec:crossprod}

Let $\aB$ be a unital C\Star-algebra and let $\alpha \in \Aut(\aB)$. 
This example focuses on the crossed product $\aA := \aB \rtimes_\alpha \Z$ equipped with the natural dual circle action $\hat \alpha$. 
We consider $\aB$ as a subalgebra of $\aA$ and write $v \in \aB \rtimes_\alpha \Z$ for the generator of the $\Z$-action defined by~$\alpha$. 
Then the action $\hat \alpha$ is given by 
\begin{align*}
	\hat{\alpha}_z(b) &= b \qquad \forall b \in \aB
	&
	\text{and}&
	&
	\hat{\alpha}_z(v) &= z \cdot v
\end{align*}
for all $z \in \mathbb T$. 
The fixed point algebra $\aA^{\mathbb T}$ is equal to $\aB$. 
More generally, each isotypic component $\aA(k)$, $k \in \mathbb{Z} \cong \Irrep(\mathbb{T})$, is given by $\aB v^k$. 
From this it may be concluded that the C\Star-dynamical system $(\aA,\mathbb{T},\hat{\alpha})$ is cleft and, therefore, free (\cf~\cite[Lem.~4.5]{SchWa16}). 
Moreover, we may put $\hH_k := \C$ for every $k \in \Z$ and let the isometries $u(k) := v^{-k}$, $k \in \Z$, serve as initial data for the construction provided in Section~\ref{sec:permanence}. 

Next, let $\DD_\aB := (\aB_0,\pi_\aB,\hH_\aB,D_\aB)$ be a spectral triple on $\aB$. 
For the sake of simplicity, we identify $\aB$ with $\pi_\aB(\aB) \subseteq \End(\hH_\aB)$. 
Then the dense subalgebra $\aA_0 \subseteq \aA$ from Section~\ref{sec:liftA} consists of all elements of the form $\sum_{k\in \Z} b_k v^k$ with only finitely many nonzero elements $b_k \in \aB_0$. 
The lifted \Star-representation $\pi_\aA: \aA \to \End(\hH_p)$ introduced in Section~\ref{sec:liftrep} is given by the Hilbert space $\hH_p := \ell^2(\Z, \hH_\aB)$  and the relations
\begin{align*}
	\bigl( \pi_\aA(b) \eta \bigr)_k 
	&= \alpha^{-k}(b) \eta_k 
	&
	\text{and}&
	&
	\bigl( \pi_\aA(v) \eta \bigr)_k &= \eta_{k-1}
\end{align*}
for all $b \in \aB$, $\eta \in \ell^2(\Z, \hH_\aB)$, and $k \in \Z$. 
The horizontal Dirac operator constructed in Section~\ref{sec:horlift} is given by the self-adjoint extension of the operator $D_h = D_\aB \tensor \one_{\ell^2(\Z)}$ on the domain $\dom(D_h) = \dom(D_\aB) \tensor \ell^2(\Z)$. 
The vertical Dirac operator $D_v$, as presented in Section~\ref{sec:verlift}, is the tensor product of the standard operator on the circle $\mathbb{T} \cong \R / \Z$, that is, $-\imath \frac{d}{dt}$, with the Pauli matrix 
$\sigma_2 := \begin{psmallmatrix} 0 & -\imath\\ \imath & 0 \end{psmallmatrix}$ or, equivalently,
\begin{align*}
	\bigl( D_v(\zeta) \bigr)_k = k \cdot \sigma_2 (\zeta_k)
\end{align*}
for all $\zeta = (\zeta_k)_{k \in \Z} \in \ell^2(\Z) \tensor \C^2 = \ell^2(\Z, \C^2)$.

Now, we additionally assume that $\alpha$ generates an equicontinuous group (see~\cite[Def.~3]{BeMaRe10}). In particular, we may choose
\begin{align*}
	\aB_0 := \Bigl\{ b \in \aB : \sup_{k \in \Z} \norm[\big]{[ D, \pi_\aA\bigl( \alpha^k(b) \bigr) ]}  < \infty \Bigr\}
\end{align*}
as a dense unital \Star-subalgebra of $\aB$, which ensures that the operators in Equation~\eqref{eq:commD_sigma} are uniformly bounded. Corollary~\ref{cor:main} therefore implies that $\DD_\aA:=(\aA_0,\pi_\aA,\hH_\aA,D_\aA)$ is a spectral triple on $\aA$ that lifts $\DD_\aB$ in the sense of Definition~\ref{def:lift} and builds in the geometry of circle, where $\hH_\aA := \hH_p \tensor \C^2 = \ell^2(\Z, \hH_\aB) \tensor \C^2$ and 
\begin{align*}
	\bigr( D_\aA(\eta) \bigl)_k = \bigl( D_\aB \tensor \sigma_1 + ( k \cdot \one_{\hH_\aB} )\tensor \sigma_2 \bigr)( \eta_k ) 
\end{align*}
for all $\eta = (\eta_k)_{k \in \Z} \in \hH_\aA$ and the Pauli matrix $\sigma_1 := \begin{psmallmatrix} 0 & 1\\ 1 & 0 \end{psmallmatrix}$.
In summary, our construction extends the construction provided in the seminal work~\cite[Sec.~3.4]{BeMaRe10} by Bellissard, Marcolli, and Reihani.

\section{Example: Quantum 4-tori}\label{sec:QT}

Let $\theta$ be a real skew-symmetric $4\times 4$-matrix and, for $1 \le k, \ell \le 4$, put $\lambda_{k,\ell} := \exp(2\pi \imath \theta_{k,\ell})$ for short.
In this example we consider the quantum 4-torus $\mathbb A^4_\theta$, which is the universal C\Star-algebra with unitary generators $u_1, \dots, u_4$ satisfying the relation $u_k u_\ell = \lambda_{k,\ell} u_\ell u_k$ for all $1 \le k,\ell \le 4$. 
The classical torus $\mathbb T^4$ acts naturally on $\mathbb A^4_\theta$ via the \Star-automorphisms given by $\tau(u_k) = z_k \cdot u_k$ for all $z = (z_1, \dots, z_4) \in \mathbb T^4$ and $1 \le k \le 4$. 
This is the so-called \emph{gauge action}. 
We write $L^2(\mathbb A^4_\theta)$ for the GNS space with respect to the unique $\tau$-invariant tracial state on $\mathbb A_\theta^4$ and assume $\mathbb A^4_\theta \subseteq \End\bigl( L^2(\mathbb A^4_\theta) \bigr)$. 
For this example it is expedient to consider the canonical inner product on $\Lie(\mathbb T^4) = \R^4$ and to work with the standard orthonormal basis. 
Because the gauge action is implemented by unitaries on $L^2(\mathbb A^4_\theta)$, the directional derivatives for each coordinate provide unbounded skew-symmetric operators $\partial_1, \dots, \partial_4$ on $L^2(\mathbb A^4_\theta)$. 
For a Dirac operator on $\mathbb A^4_\theta$, we take into account the irreducible \Star-representation of $\CL(\mathbb T^4)$ on the spinors $\hH_\spin := \C^2 \tensor \C^2$. 
That is, we choose operators $\sigma_1, \sigma_2, \sigma_3 \in \End(\C^2)$ satisfying $\sigma_k^2 = -1$ and $\sigma_k \sigma_\ell = - \sigma_\ell \sigma_k$ for all $1 \le k \neq \ell \le 3$ and put
\begin{align*}
	F_1 &:= \sigma_1 \tensor \sigma_3,
		&
	F_2 &:= \sigma_2 \tensor \sigma_3,
		&
	F_3 &:= \one \tensor \sigma_1,
		&
	F_4 &:= \one \tensor \sigma_2.
\end{align*}
Then the canonical Dirac operator on $\mathbb A^4_\theta$ is the self-adjoint extension of 
\begin{equation*}
	D_4 := \partial_1 \tensor F_1 + \partial_2 \tensor F_2 + \partial_3 \tensor F_3 + \partial_4 \tensor F_4
\end{equation*}
defined on some suitable domain in $L^2(\mathbb A^4_\theta) \tensor \hH_\spin$. 

Our study revolves around the restricted gauge action $\alpha:\mathbb T^2 \to \Aut(\mathbb A^4_\theta)$ defined by 
\begin{align*}
	\alpha_z(u_1) &:= u_1, 
	&
	\alpha_z(u_2) &:= u_2, 
	&
	\alpha_z(u_3) &:=z_1 \cdot u_3,
	&
	\alpha_z(u_4) &:=z_2 \cdot u_4
\end{align*}
for all $z = (z_1, z_2) \in \mathbb T^2$. 
Its fixed point algebra is the quantum 2-torus $\mathbb A^2_{\theta'}$ generated by the unitaries $u_1$ and $u_2$, where $\theta'$ denotes the real skew-symmetric $2\times 2$-matrix with upper right off-diagonal entry $\theta_{12}$. 
More generally, for each $(k, \ell) \in \Z^2$ the corresponding isotypic component is $\mathbb A^4_\theta(k, \ell)$ takes the form $u(k, \ell) \mathbb A^2_{\theta'}$ for the unitary $u(k, \ell) := u_4^\ell u_3^k$. 
In particular, the C\Star-dynamical system $(\mathbb A^4_\theta, \mathbb T^2, \alpha)$ is cleft and therefore free. Next, let $L^2(\mathbb A_{\theta'}^2)$ be the GNS space of $\mathbb A_{\theta'}^2$ with respect to its gauge-invariant trace. 
By Section~\ref{sec:representation}, the unitaries give rise to the \Star-representation $\pi_u:\mathbb A^4_\theta \to \End\bigl( L^2(\mathbb T^2) \tensor L^2(\mathbb A^2_{\theta'}) \bigr)$ uniquely determined by
\begin{align*}
	\pi_u(u_1)
	&= r_{\theta_{31}, \theta_{41}} \tensor u_1,
	&
	\pi_u(u_2)
	&= r_{\theta_{32}, \theta_{42}} \tensor u_2,
	\\
	\pi_u(u_3)
	&= z_1 \tensor \one,
	&
	\pi_u(u_3)
	&= r_{\theta_{34},0} z_2 \tensor \one
\end{align*}
where $r_t$, $t = (t_1,t_2) \in \R^2$, denotes the rotation operator given by $(r_tf)(z) := f(z_1^{t_1}, z_2^{t_2})$ for all $f \in L^2(\mathbb T^2)$ and $z = (z_1,z_2) \in \mathbb T^2$ and $z_{1/2}$ stands for the multiplication operator $(z_{1/2}) f(z) := z_{1/2} f(z)$ for all $f \in L^2(\mathbb T^2)$  and $z \in \mathbb T^2$.  
Now, a standard computation establishes that this \Star-representation is unitarily equivalent to the GNS-representation. 

In order to apply the construction of Section~\ref{sec:liftdirac}, we pick the canonical Dirac operator $D_2 = \partial_1 \tensor F_1 + \partial_2 \tensor F_2$ on $L^2(\mathbb A^2_{\theta'}) \tensor \C^2$. Then Section~\ref{sec:horlift} immediately provides the horizontal lift $D_h$ of $D_2$, which is the self-adjoint extension of 
\begin{equation*}
	D_h = \one \tensor \partial_1 \tensor \sigma_1 + \one \tensor \partial_2 \tensor \sigma_2
\end{equation*}
on some suitable dense domain in $L^2(\mathbb T^2) \tensor L^2(\mathbb A^2_{\theta'}) \tensor \C^2$. 
The vertical Dirac operator constructed in Section~\ref{sec:verlift} is given by
\begin{equation*}
	D_v = \partial_1^{\mathbb T^2} \tensor \sigma_1 + \partial_2^{\mathbb T^2} \tensor \sigma_2
\end{equation*}
on some suitable dense domain in $L^2(\mathbb T^2) \tensor \C^2$, where $\partial_1^{\mathbb T^2}$ and $\partial_2^{\mathbb T^2}$ denote the derivatives of the translation on $L^2(\mathbb T^2)$ along the respective coordinates.  
According to Section~\ref{sec:asslift}, the assembled Dirac operator on $\mathbb A^4_\theta$ then reads 
\begin{align*}
	D_{\mathbb A^4_\theta} 
	&= D_h \tensor \sigma_3 + \one \tensor D_v
	\\
	&= \one \tensor \partial_1 \tensor F_1
	+ \one \tensor \partial_2 \tensor F_2
	+ \partial_1^{\mathbb T^2} \tensor \one \tensor F_3
	+ \partial_2^{\mathbb T^2} \tensor \one \tensor F_4
\end{align*}
on some suitable dense domain in $L^2(\mathbb T^2) \tensor L^2(\mathbb A^2_{\theta'}) \tensor \hH_\spin$. A straightforward verification finally shows that this operator coincides with the canonical Dirac operator of $\mathbb A^4_\theta$ up to the unitary equivalence to the GNS representation.

\begin{remark}
	Let $\gamma_{k, \ell}$ for $(k, \ell) \in \Z^2 = \Irrep(\mathbb T^2)$ be the coaction of the associated factor system. 
	It is straightforward to check that $\gamma_{k, \ell}$ is, in fact, the gauge action $\gamma_{k,\ell}= \tau_z$ for $z = (\lambda_{31}^k \lambda_{41}^\ell, \lambda_{32}^k \lambda_{42}^\ell)$.
	The gauge-invariance of the Dirac operator $D_2$ therefore yields
	\begin{equation}\label{eq:qTorus_gamma}
		[D_2, \gamma_{k, \ell}(x) \tensor \one_{\C^2}] = \gamma_{k, \ell} \tensor \id \bigl( [D_2, x \tensor \one_{\C^2}] \bigr)
	\end{equation}
	for all $x \in \mathbb A^2_{\theta'}$ and $k, \ell \in \Z$. 
	This and the fact that the cocycle of the factor system takes values in $\C \cdot \smash{\one_{\mathbb A^2_{\theta'}}}$ entail that the hypotheses of Theorem~\ref{thm:D_hcomm} are fulfilled. 
	Furthermore, coming back to Remark~\ref{rmk:strong_lift}, it is an easy task to show that Equation~\eqref{eq:qTorus_gamma} implies that $\Omega^1(\mathbb A^2_{\theta'})$ naturally embeds into $\Omega^1(\mathbb A^4_{\theta})$.
\end{remark}

\section{Example: Homogeneous spaces}\label{sec:Homogeneous}

In this example we consider a compact Lie group $G$ together with a closed subgroup $H \leq G$ acting on $G$ from the right. This gives rise to a principal $H$-bundle over $G / H$. Algebraically we look at the C\Star-algebra $\Cont(G)$ endowed with the action $r_h$, $h \in H$, given by $(r_h f)(g) := f(g h)$ for all $g \in G$ and $h \in H$, which is certainly free in the sense of Ellwood according to~\cite[Prop.~7.1.12 and Thm.~7.2.6]{Phi87}). We identify the corresponding fixed point algebra with $\Cont(G/H)$.

In what follows, we write $\pi_G:\Cont(G) \to \End\bigl( L^2(G) \bigr)$ and $\pi_H: \Cont(H) \to \End \bigl( L^2(H) \bigr)$ for the \Star-representation by multiplication operators, respectively. 
For the adjoint action of $G$ on $\Lie(G)$ we write $\Ad_{g}$, $g \in G$, and we use the same notation for its extension to the Clifford algebra $\CL(G)$ by algebra automorphisms.

\subsection{Regarding freeness}

To establish freeness algebraically, we regard $L^2(H) \subseteq  L^2(G)$ as the subspace of functions with support in $H$ and
identify the multiplicator algebra $\Mul \bigl( \Cont(G) \tensor \Com \bigl(L^2(H), L^2(G) \bigr) \bigr)$ with the algebra of strongly continuous bounded functions $f: G \to \End\bigl( L^2(H), L^2(G) \bigr)$, which we denote by $\Cont_{\text{sb}} \bigl( G, \End\bigl( L^2(H), L^2(G) \bigr) \bigr)$ (\cf~\cite[Lem.~2.57]{Rae98}). 
Then the element $s \in \Cont_{\text{sb}} \bigl( G, \End( L^2(H), L^2(G)) \bigr)$ defined by 
\begin{equation*}
	s(g) := r_g|_{L^2(H)}
\end{equation*}
satisfies the conditions of Lemma~\ref{lem:coisometry} for the Hilbert space $\hH := L^2(G)$ equipped with the left translations $\mu_h:= \lambda_h |_{L^2(H)}$, $h \in H$.
We set $p:= s s^*$ for short.
The \Star-representation $\pi_s: \Cont(G) \to \Cont_{\text{sb}} \bigl( G /H, \End(\hH) \bigr)$ associated with $s$, introduced in Lemma~\ref{lem:pi_S}, reads as
\begin{equation*}
	\pi_s(f)(g) := s(g) \; \pi_G \bigl( jf(g, \; \cdot ) \bigr)  \; s(g)^*,
	\qquad
	g \in G,
\end{equation*}
where $j:\Cont(G) \to \Cont(G \times H)$ is given by $(jf)(g, h) := f(g h^{-1})$ for all $g \in G$ and $h \in H$.

\subsection{Dirac operators}

For the Dirac operators on $G$ and $G/H$ we follow~Rieffel \cite{Rieffel08} up to a conventional sign. 
For the convenience of the reader we briefly recall their construction. 
For this purpose, we endow $\Lie(G)$ with an $\Ad$-invariant inner product and decompose it into the direct sum  of $\Lie(H)$ and its orthogonal complement, denoted by $\Lie(G/H)$. 
We utilize $P_H$ and $P_{G/H}$ to be the corresponding orthogonal projections onto $\Lie(H)$ and $\Lie(G/H)$, respectively, and we write $\CL(G/H)$ for the Clifford algebras of $\Lie(G/H)$.

\subsubsection{Dirac operators on \texorpdfstring{$G$}{the large group}}\label{sec:homogeneous_Dirac_big}

For the rest of the paper, we let $F_X: \CL(G) \to \CL(G)$ stand for the multiplication by a vector $X \in \Lie(G)$, \ie, $F_X(\varphi) := X \cdot \varphi$ for all $\varphi \in \CL(G)$. To construct a Dirac operator on $G$, we first look at the representation of~$\Cont(G)$ on the Hilbert space 
\begin{equation*}
	\hilb S^2(G) := L^2\bigl( G, \CL(G) \bigr)
\end{equation*}
by pointwise multiplication operators and identify $\alg T(G) := \Cont^\infty \bigl( G, \Lie(G) \bigr)$ with the space of smooth sections of the tangent bundle of $G$ in terms of left translations. Obviously, $\Cont^\infty(G)$ acts on $\alg T(G)$ by pointwise multiplication. Second, we define both a connection and a Clifford multiplication on the subspace $\hilb S^\infty(G) \subseteq \hilb S^2(G)$ of all smooth functions by putting for each $X \in \alg T(G)$ and $\varphi \in \hilb S^\infty(G)$
\begin{align*}
	\partial_X^{G} \varphi (g) 
	&:= \dt \varphi \bigl( \exp (-t X(g) ) g \bigr)
	&
	&\text{and} 
	&
	F_X \varphi(g) 
	:= F_{X(g)} \varphi(g)
\end{align*}
for all $g \in G$, respectively. Finally, we fix a standard module frame $(X_k)_k$ of $\alg T(G)$ and extend the essentially self-adjoint operator
\begin{equation*}
	D_G := \sum_k F_{X_k} \; \partial_{X_k}^G
\end{equation*}
from the domain $\hilb S^\infty(G)$ to a self-adjoint Dirac operator, which we again denote by $D_G$. 
This operator is independent of the choice of standard module frame (see~\cite[Sec.~8]{Rieffel08}). 
However, there is a convenient choice of a standard module frame of $\alg T(G)$ in order to work with the quotient. 
Since the identification of $\alg T(G)$ with the tangent bundle is done via left translation and $H$ acts on $G$ from the right, it is expedient to decompose $\Lie(G)$ in the range of elements of $\alg T(G)$ at a point $g \in G$ into 
\begin{equation*}
	\Lie(G) = \Ad_g \bigl( \Lie(G/H) \bigr) \oplus \Ad_g \bigl( \Lie(H) \bigr).
\end{equation*}
We may then fix an orthonormal basis $(X_k)_k$ of $\Lie(G)$ and put
\begin{align} \label{eq:module_frame}
	Y_k(g) &:= \Ad_g P_{G/H} \Ad_g^{-1}(X_k)
		   &
		   &\text{and} 
		   &
	Z_k(g) &:= \Ad_g P_H \Ad_g^{-1}(X_k)
\end{align}
for all $g \in G$. Together $(Y_k)_k$ and $(Z_k)_k$ form a standard module frame of $\alg T(G)$ and we call the corresponding summands of $D_G$, 
\begin{align*}
	D_h &:= \sum_k F_{Y_k} \; \partial_{Y_k}^G
		& 
		&\text{and}
		& 
	D_v &:= \sum_k F_{Z_k} \; \partial_{Z_k}^G
\end{align*}
the \emph{horizontal part} and the \emph{vertical part} of $D_G$, respectively.

\subsubsection{Dirac operators on \texorpdfstring{$G/H$}{the quotient}} 

For a Dirac operator on $G/H$ we consider the Hilbert space 
\begin{equation*}
	\hilb S^2(G/H) := \{ \varphi \in L^2\bigl( G, \CL(G/H) \bigr) : \varphi(g h) = \Ad_h^{-1} \bigl( \varphi(g)\bigr) \quad \forall g \in G, h \in H \}
\end{equation*}
and let $\pi_{G/H}: \Cont(G/H) \to \End\bigl( \hilb S^2(G/H) \bigr)$ stand for the \Star-representation of $\Cont(G/H)$ on $\hilb S^2(G/H)$ by multiplication operators. Moreover, we denote by $\hilb S^\infty(G/H) \subseteq \hilb S^2(G/H)$ the subspace of smooth functions, point out that the module
\begin{equation*}
	\alg T(G/H) := \{ X \in \Cont^\infty \bigl( G, \Lie(G/H) \bigr) : X(g h) = \Ad_h^{-1} \bigl( X(g) \bigr) \quad \forall g \in G, h \in H \}
\end{equation*} 
can be recognized as smooth sections of the tangent bundle of~$G/H$, and write $F_X$ for the pointwise Clifford multiplication on $\hilb S^\infty(G/H)$ by some $X \in \alg T(G/H)$. We also bring to mind that $\hilb S^2(G/H)$ and $\alg T(G/H)$ can naturally be regarded as subspaces of $\hilb S^2(G)$ and $\alg T (G)$, respectively, with respect to the embeddings $\varphi \mapsto \hat \varphi$ and $X \mapsto \hat X$ given by
\begin{align*}
	\hat\varphi(g) 
	&:= \Ad_g \bigl( \varphi(g) \bigr)
	&
	&\text{and}
	&
	\hat X(g)
	&:= \Ad_g \bigl( X(g) \bigr)
\end{align*}
for all $g\in G$, respectively. Correspondingly, each $X \in \alg T(G/H) \subseteq \alg T (G)$ gives rise to a connection $\partial_X^{G/H} : \hilb S^\infty(G/H) \to \hilb S^\infty(G/H)$ by putting
\begin{equation*}
	\partial_X^{G/H} \varphi(g) := \dt \varphi \bigl( g \exp (-t X(g)) \bigr)
\end{equation*}
for all $g \in G$. We may now choose any standard module frame $(Y_k)_k$ for $\alg T(G/H)$ and define an operator $D_{G/H} : \hilb S^\infty(G/H) \to \hilb S^\infty(G/H)$ via
\begin{equation*}
	D_{G/H} := \sum_k F_{Y_k}^{} \partial_{Y_k}^{G/H}.
\end{equation*}
It follows from~\cite[Cor.~8.5]{Rieffel08} that $D_{G/H}$, the so-called Hodge-Dirac operator, is formally self-adjoint, and hence it admits a self-adjoint, possibly unbounded extension, for which we use the same letter $D_{G/H}$.

\subsubsection{The horizontal and the vertical part of the lifted Dirac operator} 

The task is now to lift the Hodge-Dirac operator $\smash{D_{G/H}}$ to a Dirac type operator on $G$. 
For this purpose, we consider the possibly degenerated \Star-representation of $\Cont(G)$ on the Hilbert space $\hilb S^2(G/H) \tensor \hH \tensor \CL(H)$ given by 
\begin{equation*}
	\pi(f) := \bigl( \pi_{G/H} \tensor \id_{\End(\hH)} \bigr) \bigl( \pi_s(f) \bigr) \tensor \one_{\CL(H)}, \qquad f \in \Cont(G).
\end{equation*}
To simplify notation, we swap tensor factors and restrict $\pi$ to its non-degenerate range, which is the subspace $\hilb S^2 \subseteq L^2\bigl( G, \hH \tensor \CL(G/H) \tensor \CL(H) \bigr)$ of functions $\varphi$ with
\begin{equation}
	\label{eq:equivariant}
	\varphi(g h) = \bigl( p(g) \tensor \Ad_h^{-1} \tensor \Ad_h^{-1} \bigr) \varphi(g) \qquad \forall g \in G, h \in H.
\end{equation}
Moreover, we consider the subspace $\hilb S^\infty \subseteq \hilb S^2$ of functions with range in the algebraic tensor product $\hH^\infty \tensor \CL(G/H) \tensor \CL(H)$, where $\hH^\infty$ denotes the smooth domain of the unitary representation $\mu: H \to \Unitary(\hH)$. 
Following the construction of Section~\ref{sec:permanence}, we infer that the horizontal part $\hat D_h : \hilb S^\infty \to \hilb S^\infty$ of the Dirac operator is given by
\begin{align*}
	\hat D_h \varphi &= (p \tensor \one_{\CL(G/H)} \tensor \one_{\CL(H)})(1_\hH \tensor D_{G/H} \tensor \Omega) \varphi,
\end{align*}
$\Omega$ being the grading operator on $\CL(H)$.
The vertical part $\hat{D}_v : \hilb S^\infty \to \hilb S^\infty$ is given by
\begin{equation*}
	\hat D_v \varphi(g) 
	= \sum_k \dt ( \mu_{\exp(t X_k)} \tensor \one_{\CL(G/H)} \tensor F_{X_k} ) \varphi(g), \qquad g \in G,
\end{equation*}
for an arbitrarily chosen orthonormal basis $(X_k)_k$ of $\Lie(H)$. The lifted Dirac operator $\hat D: \hilb S^\infty \to \hilb S^\infty$ on $G$ established in Theorem~\ref{thm:main} thus takes the form $\hat{D}:= \hat D_h + \hat D_v$.

\subsubsection{Comparing the Dirac operators on \texorpdfstring{$G$}{G}}

We are finally in a position to compare the lifted Dirac operator $\hat D$ with the canonical Dirac operator $D_G$. 
To this end, we let $W: \CL(G/H) \tensor \CL(H) \to \CL(G)$ stand for the unitary map defined by $W(X \tensor Y) := X \cdot \Omega(Y)$ for all $X \in \CL(G/H)$ and $Y \in \CL(H)$. 
We also write $\ev_1:L^2(H) \supseteq \Cont^\infty(H) \to \C$ for the evaluation at the unit element of $H$, that is, $\varphi \mapsto \varphi(1)$. 
Then the map $U:\hilb S^\infty \to L^2 \bigl( G, \CL(G) \bigr)$ given by
\begin{align}\label{eq:homogoneous_unitary}
	(U \varphi)(g) 
	&:= U(g) \varphi(g)
	&
	&\text{with}
	&
	U(g) 
	&:= \ev_1 s(g)^* \tensor \Ad_g W
\end{align}
for all $\varphi \in \hilb S^\infty$ and $g \in G$ extends to a unitary map $U:\hilb S^2 \to \hilb S^2(G)$.

\begin{lemma}
	The covariant representations $(\pi_G, r_h)$ and $(\pi, \mu)$ are unitarily equivalent. 
	More precisely, for all $f \in \Cont(G)$ and $h \in H$ we have 
	\begin{align*}
		U \pi(f) 
		&= \pi_G(f) U
		&
		&\text{and}
		&
		r_h U 	 
		&= U \mu_h.
	\end{align*}
\end{lemma}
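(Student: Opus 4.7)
The plan is to reduce both identities to pointwise assertions in $g \in G$, carried out on the dense subspace $\hilb S^\infty$, and then extend by continuity. Both $U$ and the operators $\pi(f)$, $\pi_G(f)$, $\mu_h$, and $r_h$ admit explicit pointwise formulas on $\hilb S^\infty$, so each equation becomes an identity of operators on the fibre $\hH \tensor \CL(G/H) \tensor \CL(H)$ at every $g$.

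For $U \pi(f) = \pi_G(f) U$, after restricting to $\hilb S^2$ the operator $\pi(f)$ acts pointwise as $s(g) \pi_G\bigl( jf(g,\cdot) \bigr) s(g)^* \tensor \one_{\CL(G/H)} \tensor \one_{\CL(H)}$. Composing on the left with $U(g) = \ev_1 s(g)^* \tensor \Ad_g W$, the Clifford factor passes through unchanged, and the $\hH$-part collapses via $s(g)^* s(g) = \one$ to $\ev_1 \pi_G\bigl( jf(g,\cdot) \bigr) s(g)^*$. Since $\pi_G\bigl( jf(g,\cdot) \bigr)$ is multiplication on $L^2(H)$ by the function $h \mapsto f(gh^{-1})$, evaluation at $1_H$ returns the scalar $f(g)$, yielding $f(g) \cdot \ev_1 s(g)^*$. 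On the other side, $\pi_G(f)$ acts on $\hilb S^2(G)$ at $g$ by multiplication with $f(g)$, and the first intertwining follows.

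For $r_h U = U \mu_h$, evaluating the left side at $g$ produces $U(gh) \varphi(gh)$, which by the equivariance condition~\eqref{eq:equivariant} equals $U(gh) \bigl( p(g) \tensor \Ad_h^{-1} \tensor \Ad_h^{-1} \bigr) \varphi(g)$. On the Clifford factor, since $\Ad_h$ preserves both the splitting $\Lie(G) = \Lie(G/H) \oplus \Lie(H)$ and the parity grading $\Omega$ on $\CL(H)$, it commutes with $W$, whence $\Ad_{gh} W (\Ad_h^{-1} \tensor \Ad_h^{-1}) = \Ad_g \, \Ad_h W \Ad_h^{-1} = \Ad_g W$. On the $\hH$-factor, the group law $r_{gh} = r_g r_h$ gives $s(gh) = s(g) \cdot r_h|_{L^2(H)}$, so $s(gh)^* p(g) = s(gh)^* s(g) s(g)^* = r_{h^{-1}}|_{L^2(H)} \cdot s(g)^*$; post-composing with $\ev_1$ yields on any smooth $\xi$ the value $\xi(h^{-1} g^{-1})$, which is precisely $\ev_1 s(g)^* \mu_h \xi$. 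Collecting the two factors reproduces $U \mu_h \varphi(g)$, finishing the verification.

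The main subtlety I foresee is that the pointwise operator $\ev_1 s(g)^*$ is only well-defined on smooth sections, so all of the above manipulations must first be carried out on the dense subspace $\hilb S^\infty$, and the resulting identities of bounded operators then extended to $\hilb S^2$ by density. Once this bookkeeping is handled, both intertwining relations reduce to short algebraic manipulations of the two tensor factors.
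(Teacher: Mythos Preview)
Your proposal is correct and follows essentially the same pointwise strategy as the paper. The only noteworthy difference is in the $\hH$-factor of the second identity: the paper derives $\ev_1 s(gh)^* = \ev_1 s(g)^* \mu_h$ abstractly from the intertwining relations~\eqref{eq:SOPequivariance} and~\eqref{eq:SOPcommuting} combined with the elementary observation $\ev_1 r_h^{-1} = \ev_1 \lambda_h$ on $L^2(H)$, whereas you compute the same identity directly from the explicit formula $s(g) = r_g|_{L^2(H)}$ by evaluating on a test vector; both routes are short and equivalent.
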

\begin{proof}
	Let us first fix $f \in \Cont(G)$. 
	Then for each $g \in G$ and $\varphi \in \Cont^\infty(H)$ we find
	\begin{equation*}
		(\ev_1 \circ \pi_H) \bigl( jf(g, \;\cdot \;) \bigr) \varphi 
		= f(g) \cdot \varphi(1) 
		= f(g) \cdot \ev_1(\varphi).
	\end{equation*}
	From this, for each $\varphi \in \hilb S^\infty(G)$ and $g \in G$ we obtain that
	\begin{align*}
	 	\bigl( U \pi(f) \varphi \bigr)(g)
		&= \bigl( \ev_1 s(g)^* \tensor \Ad_g W \bigr) \; 
		\\
		& \qquad 
		\Bigl(  \bigl( s(g) \, \pi_H \bigl( jf(g, \; \cdot \;) \bigr) \, s(g)^* \bigr) \tensor \one_{\CL(G/H)} \tensor \one_{\CL(H)} \Bigr) \varphi(g)
		\\
		&= f(g) \cdot  \bigl( \ev_1 s(g)^* \tensor \Ad_g W \bigr) \varphi(g)
		= \bigl( \pi_G(f) U \varphi \bigr)(g).
	\end{align*}
	In other words, we have $U \pi(f) = \pi_G(f) U$ as claimed. 
	To deal with the second assertion, we fix $h \in H$ and note that
	\begin{equation}\label{eq:nu_to_mu}
		\ev_1 s(g h)^* 
		\overset{\eqref{eq:SOPequivariance}}= \ev_1 r_h^* s(g)^*
		\ev_1 \lambda_h s(g)^* 
		\overset{\eqref{eq:SOPcommuting}}= \ev_1 s(g)^* \mu_h
	\end{equation}
	for all $g \in G$. Hence for each $\varphi \in \hilb S^\infty$ and $g \in G$ we deduce that
	\begin{align*}
		(r_h U \varphi)(g)
		&\overset{\hphantom{\eqref{eq:nu_to_mu}}}=
		\bigl( \ev_1 s(g h)^*  \tensor \Ad_{g h} W \bigr) \varphi(g h)
		\\
		&\overset{\eqref{eq:nu_to_mu}}=
		\bigl( \ev_1 s(g)^* \tensor \Ad_g W \bigr) (\mu_h \tensor \Ad_h \tensor \Ad_h) \varphi(g h)
		\\
		&\overset{\eqref{eq:equivariant}}= 
		\bigl( \ev_1 s(g)^* \tensor \Ad_g W\bigr)( \mu_h \tensor \one_{\CL(G/H) \tensor \CL(H)}) \varphi(g)
		= (U \mu_h \varphi)(g).
	\end{align*}
	That is, $r_h U = U \mu_h$, and so the proof is complete.
\end{proof}

\pagebreak[3]
\begin{theorem}\label{thm:compare}
	For the module frame in Equation~\eqref{eq:module_frame}, and hence for all module frames, the following assertions hold:
	\begin{enumerate}
		\item
			$D_h = U \hat D_h U^* + \sum_k  F_{Y_k} (d_{Y_k}U) U^*$ on $\hilb S^\infty(G)$. 
		\item
			$D_v = U \hat D_v U^*$ on $\hilb S^\infty$.
	\end{enumerate}
	Here, for each $Y \in \alg T(G)$, we put $(d_Y U) \varphi(g) := \dt U \bigl( \exp(-t Y(g))g \bigr) \varphi(g)$
	for all $\varphi \in \hilb S^\infty$ and $g\in G$.
\end{theorem}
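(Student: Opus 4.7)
The plan is to verify both identities by direct pointwise computation at $g\in G$, applying the Leibniz rule to $(U\varphi)(g) = U(g)\varphi(g)$. For any $Y \in \alg T(G)$, the first step is to expand $\partial^G_Y(U\varphi)(g) = \dt U(e^{-tY(g)}g)\varphi(e^{-tY(g)}g)$ into the decomposition
\[
	\partial^G_Y(U\varphi)(g) = (d_Y U)\varphi(g) + U(g)\bigl(\partial^G_Y \varphi\bigr)(g),
\]
and to substitute this into the frame expansions $D_h = \sum_k F_{Y_k}\partial^G_{Y_k}$ and $D_v = \sum_k F_{Z_k}\partial^G_{Z_k}$ using the module frame of Equation~\eqref{eq:module_frame}.

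For part~(1) the first Leibniz piece immediately sums to $\sum_k F_{Y_k}(d_{Y_k}U)\varphi = \bigl(\sum_k F_{Y_k}(d_{Y_k}U)U^*\bigr)U\varphi$, which is precisely the claimed correction term. To identify the second piece with $U\hat D_h\varphi$, I would invoke three observations in succession: because $Y_k(g) \in \Ad_g\Lie(G/H)$, the identity $e^{-tY_k(g)}g = g\exp\bigl(-t\Ad_{g^{-1}}Y_k(g)\bigr)$ converts $\partial^G_{Y_k(g)}\varphi(g)$ into $\partial^{G/H}_{\tilde Y_k}\varphi(g)$, where $\tilde Y_k(g) := \Ad_{g^{-1}}Y_k(g) \in \alg T(G/H)$; unwinding $\Ad_g$ through the Clifford product $Y_k(g) \cdot X \cdot \Omega(Y)$ yields an intertwiner $F_{Y_k(g)}U(g) = U(g)\bigl(\one_\hH \tensor \tilde F_{\tilde Y_k(g)} \tensor \one_{\CL(H)}\bigr)$ up to a grading factor accounting for the $\Omega$ built into $\hat D_h$; and the projection $p$ is absorbed by $U$, since $s(g)^* p(g) = s(g)^*$ forces $U(g)(p(g) \tensor \one \tensor \one) = U(g)$. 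Reassembling these observations identifies the second Leibniz piece with $U\hat D_h\varphi$, and frame-independence of $D_h$ then upgrades the identity to arbitrary module frames.

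For part~(2) I would exploit the frame-independence of $D_v$ and switch to the left-invariant frame $Z_k(g) := \Ad_g f_k$, where $(f_k)$ is an orthonormal basis of $\Lie(H)$, so that $e^{-tZ_k(g)}g = g e^{-tf_k}$ is pure right-translation by an element of $H$. This is the decisive simplification: along these curves the behavior of $U$ and of $\varphi$ is governed entirely by $H$-equivariance, namely $s(gh) = s(g)\tilde\mu_h$ with $\tilde\mu_h := r_h|_{L^2(H)}$ and $\Ad_{gh} = \Ad_g\Ad_h$ for~$U$, together with the defining condition $\varphi(gh) = (p(g) \tensor \Ad_{h^{-1}} \tensor \Ad_{h^{-1}})\varphi(g)$ for~$\varphi$. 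The crucial cancellation is that $\Ad_h$ commutes with the Clifford grading $\Omega$, so $\Ad_{e^{-tf_k}} \circ W \circ (\Ad_{e^{tf_k}} \tensor \Ad_{e^{tf_k}}) = W$; the two $\Ad_{e^{-tf_k}}$-factors brought in by $U(ge^{-tf_k})$ and by $\varphi(ge^{-tf_k})$ annihilate each other, reducing $(U\varphi)(ge^{-tf_k})$ to an expression in which only the $\tilde\mu$-factor carries $t$-dependence. Differentiating at $t=0$ and applying $F_{Z_k(g)} = \Ad_g(f_k) \cdot$ then yields $U\hat D_v\varphi$ with no residual correction, consistent with the absence of a $d_YU$-term in~(2).

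The hard part will be the algebraic sign and grading bookkeeping. The definition $W(X \tensor Y) = X \cdot \Omega(Y)$ together with the Clifford anticommutation $f_k \cdot X = \pm X \cdot f_k$ for $f_k \in \Lie(H)$ and $X \in \Lie(G/H)$ introduces grading factors whenever Clifford multiplications are transported across $W$, and these must be shown to conspire with the explicit $\Omega$ appearing in $\hat D_h\varphi = (p \tensor \one \tensor \one)(\one_\hH \tensor D_{G/H} \tensor \Omega)\varphi$ and with the conventions $\partial_X u(\eta) := \dt u_{\exp(tX)}\eta$ and $\partial^G_X f(g) := \dt f\bigl(\exp(-tX)g\bigr)$ from Section~\ref{sec:unbounded} in order to produce the correct overall signs. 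Once these have been tracked carefully, both claims reduce to pointwise algebraic identities at each $g\in G$.
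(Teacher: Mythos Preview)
Your proposal is correct and follows essentially the same route as the paper: a pointwise Leibniz expansion of $\partial^G_{Y}(U\varphi)$, the intertwining identities $F_{Y'}W = W(F_{Y'}\tensor\Omega)$ for $Y'\in\Lie(G/H)$ and $F_{Z'}W = W(\one\tensor F_{Z'})$ for $Z'\in\Lie(H)$, and the $H$-equivariance of $s$ and $\varphi$ to collapse the vertical term. The one minor difference is that for part~(2) you propose the left-invariant frame $Z_k(g)=\Ad_g f_k$ with $(f_k)$ an orthonormal basis of $\Lie(H)$, whereas the paper sticks with the frame of Equation~\eqref{eq:module_frame} (indexed by a basis of all of $\Lie(G)$) and only invokes frame-independence at the very end to match the defining expression for $\hat D_v$; your choice is slightly cleaner and short-circuits that last step, but the argument is otherwise identical.
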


\begin{remark}\label{rem:compare}
	The operator $\sum_k F_{Y_k} (d_{Y_k} U) U^*$ appearing in Lemma~\ref{thm:compare} above commutes with every element $\pi(x)$ for $x \in \Cont(G)$. Consequently, this additional term does not effect any of the commutators $[D_h^n, \pi(x)]$, $n \in \N$.
\end{remark}

\begin{proof}
	For the computation we fix an orthonormal basis of $\Lie(G)$ and utilize the standard module frame of $\alg T(G)$ introduced in Equation~\eqref{eq:module_frame}. 
	Accordingly, we choose $(Y'_k)_k$ with $Y'_k(g) := \Ad_g^{-1} \bigl( Y_k(g) \bigr) \in \Lie(G/H)$, $g \in G$, and  $(Z'_k)_k$ with $Z'_k(g) := \Ad_g^{-1} \bigl( Z_k(g) \bigr) \in \Lie(H)$, $g \in G$, as standard module frames of $\alg T(G/H)$ and $\alg T(H)$, respectively. 
	\begin{enumerate}
		\item
			Let $\varphi \in \hilb S^\infty$ and $g \in G$. 
			Applying the Leibniz rule, for each $k$ we deduce that
			\begin{equation}\label{eq:U_derivatives}
				\begin{split}
					\Bigl( \partial^G_{Y_k} U \varphi \Bigr)(g)
					&= \dt U \bigl( e^{-t Y_k(g)} g \bigr) \varphi \bigl(e^{-t Y_k(g)} g \bigr)
					\\
					&= (d_{Y_k}U \varphi)(g) + \Bigl( U \partial^{G/H}_{Y'_k} \varphi \Bigr)(g).
				\end{split}
			\end{equation}
			Furthermore, since $F_Y W = W(F_Y \tensor \Omega)$ for every $Y \in \Lie(G/H)$, for each $k$ we find
			\begin{equation}\label{eq:U_Clifford}
				\begin{split}
					F_{Y_k(g)} U(g) 
					&= \ev_1 s(g)^* \tensor F_{Y_k(g)} \Ad_g W
					= \ev_1 s(g)^* \tensor \Ad_g F_{Y'_k(g)} W
					\\
					&= \ev_1 s(g)^* \tensor \Ad_{g} W \bigl( F_{ Y'_k(g)} \tensor \Omega \bigr) 
					= U(g) \bigl( F_{Y_k'(g)} \tensor \Omega \bigr).
				\end{split}
			\end{equation}
			From this the asserted equation follows:
			\begin{align*}
				U^* D_h U \varphi
				&= \sum_k U^* F_{Y_k} \partial^G_{Y_k} U \varphi
				\overset{\eqref{eq:U_derivatives}}= 
				\sum_k U^* F_{Y_k} \Bigl( U \partial^{G/H}_{Y'_k} + d_{Y_k}U \Bigr) \varphi 
				\\
				&\overset{\eqref{eq:U_Clifford}}=
				\sum_k \bigl( F_{Y'_k} \tensor \Omega \bigr) \partial^{G/H}_{Y'_k} \varphi + \sum_k U^* F_{Y_k} d_{Y_k}U \varphi 
				\\
				&= \hat D_h \varphi + \sum_k U^* F_{Y_k} d_{Y_k} U \varphi.
			\end{align*}
	\item
		Let $\varphi \in \hilb S^\infty$ and $g \in G$. By Equation~\eqref{eq:equivariant}, for each $k$ we have
		\begin{equation}\label{eq:phi_Ad}
			\varphi \big( e^{-t Z_k(g)} g \bigr) 
			= \varphi \bigl( g e^{-t Z'_k(g)} \bigr)
			= \bigl( \one_{\hH} \tensor \Ad_{\exp(tZ'_k(g))} \tensor \Ad_{\exp( t Z'_k(g))} \bigr) \varphi(g).
		\end{equation}
		For the vertical derivatives it may thus be concluded that
		\begin{align*}
			\MoveEqLeft
			\Bigl( \partial_{Z_k}^G U \varphi \Bigr)(g)
			\overset{\hphantom{\eqref{eq:nu_to_mu}}}=
			\dt U \bigl(e^{-t Z_k(g)} g \bigr) \varphi \bigl( e^{-tZ_k(g)} g\bigr)
			\\
			&\overset{\hphantom{\eqref{eq:nu_to_mu}}}=
			 \dt U \bigl( g e^{-t Z'_k} \bigr) \varphi \bigl( g e^{-t Z'_k(g)} \bigr)
			\\
			&\overset{\eqref{eq:nu_to_mu}}=
			\dt U(g) \bigl( \mu_{\exp(-tZ'_k(g))}\tensor \Ad_{\exp(-tZ'_k(g))} \tensor \Ad_{\exp(-tZ'_k(g))} \bigr) \varphi \bigl( g e^{-t Z'_k(g)} \bigr)
			\\
			&\overset{\eqref{eq:phi_Ad}}=
			\Bigl( U \bigl( \partial_{Z'_k} \mu \tensor \one_{\CL(G/H) \tensor \CL(H)} \bigr) \varphi \Bigr)(g),
		\end{align*}
		where $\partial_X \mu$, $X \in \alg T(H)$, denotes the operator on $L^2(G, \hH^\infty) \subseteq L^2(G,\hH)$ given by 
		\begin{equation*}
			(\partial_X \mu \, \varphi)(g) := \dt \mu_{\exp(t X(g))} \varphi(g).
		\end{equation*}
		Furthermore, since $F_Z W = W(\one \tensor F_Z)$ for all $Z \in \alg T(H) \subseteq \alg T(G)$, for each $k$ we see in much the same way as above that $F_{Z_k} U \varphi = U \bigl( \one \tensor F_{Z'_k} \bigr) \varphi$. 
		Hence
		\begin{align*}
			U^* D_v U \varphi
			&= \sum_k U^* F_{Z_k} \partial_{Z_k}^G U \varphi
			= \sum_k U^* F_{Z_k} U \partial_{Z'_k} \, \mu \varphi
			= \sum_k F_{Z'_k} \partial_{Z'_k} \, \mu \varphi.
		\end{align*}
		This gives $D_v = U \hat D_v U^*$ as claimed, because the choice of the standard module frame of $\alg T(G)$ in the definition of $\hat D_v$ is irrelevant. 
		\qedhere
	\end{enumerate}
\end{proof}

\appendix

\section{Complementary results and proofs}\label{sec:proofs}

In this appendix we provide complementary results and proofs for the sake of completeness.

\begin{lemma}\label{lem:comres1}
	Let $D$ be an unbounded self-adjoint operator on a Hilbert space $\hH$ with compact resolvent and let $x \in \End(\hH)$. Then $D + x$ has compact resolvent.
\end{lemma}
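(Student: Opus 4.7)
The plan is to reduce everything to a resolvent identity together with the fact that the ideal of compact operators absorbs bounded operators on either side. Since $x \in \End(\hH)$ is bounded, the operator $D + x$ with domain $\dom(D+x) := \dom(D)$ is closed (a bounded perturbation of a closed operator remains closed). Thus it makes sense to speak of the resolvent set of $D + x$, and the statement to prove is that for some (equivalently, any) $\lambda$ in this resolvent set, the operator $(D+x-\lambda)^{-1}$ is compact.

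First I would exhibit a concrete point in $\rho(D+x)$. Since $D$ is self-adjoint, for every $\lambda = i t$ with $t \in \R \setminus \{0\}$ we have $\lambda \in \rho(D)$ and the standard bound
\begin{equation*}
	\norm{(D - \lambda)^{-1}} \le \frac{1}{\abs{t}}
\end{equation*}
holds. Choosing $t$ with $\abs{t} > \norm{x}$ gives $\norm{(D-\lambda)^{-1} x} < 1$, so $\one + (D - \lambda)^{-1} x$ is invertible in $\End(\hH)$ by a Neumann series argument.

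Next I would use the algebraic factorization
\begin{equation*}
	D + x - \lambda = (D - \lambda)\bigl(\one + (D - \lambda)^{-1} x\bigr)
\end{equation*}
on $\dom(D)$. Both factors on the right are bijections from $\dom(D)$ onto $\hH$ in the appropriate sense, so $\lambda \in \rho(D+x)$ and
\begin{equation*}
	(D + x - \lambda)^{-1} = \bigl(\one + (D - \lambda)^{-1} x\bigr)^{-1} (D - \lambda)^{-1}.
\end{equation*}
Since $(D - \lambda)^{-1}$ is compact by hypothesis and the other factor is bounded, the product lies in $\Com(\hH)$.

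There is no real obstacle here; the only point to keep in mind is that $x$ is not assumed self-adjoint, so $D + x$ need not be self-adjoint either, and hence one cannot invoke self-adjointness to conclude $\rho(D+x) \neq \emptyset$ for free. This is precisely why the first step produces an explicit point $\lambda = it$ in the resolvent set via a Neumann series estimate rather than appealing to spectral theory of self-adjoint operators.
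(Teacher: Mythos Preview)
Your proof is correct and follows essentially the same route as the paper: pick a purely imaginary $\lambda$ with $\lvert\lambda\rvert > \norm{x}$, use a Neumann series to invert the bounded perturbation factor, and conclude compactness from the ideal property of $\Com(\hH)$. The only cosmetic difference is that the paper factors on the other side, writing $\lambda - (D+x) = \bigl(\one - x(\lambda - D)^{-1}\bigr)(\lambda - D)$ and hence $(\lambda - (D+x))^{-1} = (\lambda - D)^{-1}\bigl(\one - x(\lambda - D)^{-1}\bigr)^{-1}$, whereas you factor as $D + x - \lambda = (D-\lambda)\bigl(\one + (D-\lambda)^{-1}x\bigr)$; both are equally valid.
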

\begin{proof}
	Let $\lambda \in \imath \R$ such that $\Vert x(\lambda-D)^{-1} \Vert <1$. Then the operator $\bigl( \one-x(\lambda-D)^{-1} \bigr)$ is invertible and thus the identity $\bigl( \lambda-(D+x) \bigr) = \bigl( \one-x(\lambda-D)^{-1} \bigr) (\lambda-D)$ implies that $\lambda$ is also a regular value for $D+x$. In particular, we deduce that
	\begin{equation*}
		\bigl( \lambda-(D+x) \bigr)^{-1} = (\lambda-D)^{-1} \bigl( \one-x(\lambda-D)^{-1} \bigr)^{-1}.
	\end{equation*}
	Since $(\lambda - D)^{-1}$ is compact, this operator is compact, too. Hence $D+x$ has compact resolvent.
\end{proof}

\begin{proof}[Proof of Lemma~\ref{lem:resD}]
	We first note that $D_p$ has dense domain, because $p$ is contractive. That is, $D_p$ is indeed an unbounded operator on $p(\hH)$.
	\begin{enumerate}
	\item 
		For all $\xi, \eta \in \dom(D_p)$ we have
		\begin{equation*}
			\langle \xi, D_p \eta \rangle = \langle \xi, D \eta \rangle = \langle D \xi, \eta \rangle = \langle D_p \xi, \eta \rangle,
		\end{equation*}
        which entails that $D_p$ is symmetric. Therefore, it suffices to prove that $\dom(D_p^*) \subseteq \dom(D_p)$. To do this, let us fix $\xi \in \dom(D_p^*)$. Then for each $\eta \in \dom(D)$ we find
		\begin{equation*}
			\langle \xi, D \eta \rangle 
			= \langle \xi, pD \eta \rangle 
			= \langle \xi, Dp \eta \rangle - \langle \xi, [D,p]\eta \rangle 
			= \langle \xi, D_p (p\eta) \rangle - \langle \xi, [D,p] \eta \rangle.
		\end{equation*}
		Since $[D, p]$ is bounded, the right-hand side of the above equation is a continuous function of $\eta$. Consequently, $\xi \in \dom(D^*)= \dom(D)$, and $\xi = p\xi$ thus belongs to $p \, \dom(D) = \dom(D_p)$.
	\item
		Suppose $D$ has compact resolvent. Due to $p \, \dom(D) \subseteq \dom(D)$, we can assert that the linear operator $\tilde D_p := p D p + (1-p)D(1-p)$ on $\hH$ is a well-defined unbounded operator with domain $\dom(\tilde D_p) := \dom(D)$. Moreover, rewriting $\tilde D_p$ as
		\begin{equation*}
			\tilde D_p = D - (1-p)Dp - pD(1-p) = D + \bigl[ [D,p],(1-p) \bigr]
		\end{equation*}
		we see that $\tilde D_p$ has compact resolvent by Lemma~\ref{lem:comres1}. From this and the fact that $\tilde D_p$ commutes with $p$ it follows that $D_p = p \tilde D_p$ has compact resolvent, too. \qedhere		
	\end{enumerate}
\end{proof}

\begin{proof}[Preface to Lemma~\ref{lem:comres2}]
	We begin by briefly reviewing the Dirac operator associated with the left regular representation $\lambda: G \to \alg U \bigl(L^2(G)\bigr)$, $g \mapsto \lambda_g$ and the \Star-representation $\pi_\spin$, which is the self-adjoint extension of
	\begin{equation*}
		D_G = \sum_{k=1}^n \partial_{X_k} \lambda \tensor F_{X_k}
	\end{equation*}
	defined on some suitable domain in $L^2(G) \tensor \hH_\spin$. 
	Decomposing $L^2(G)$ into its isotypic components $L^2(G) = \bigoplus_{\sigma \in \Irrep(G)} V_\sigma \tensor \bar V_\sigma$ such that the left and right translation read as
\begin{align*}
	\lambda_g &= \bigoplus_{\sigma \in \Irrep(G)} \one_{V_\sigma} \tensor \bar \sigma_g
	& 
	&\text{and}
	& 
	r_g &= \bigoplus_{\sigma \in \Irrep(G)} \sigma_g \tensor \one_{\bar V_\sigma}
\end{align*} 
for all $g \in G$, respectively, we easily infer that each eigenspace $E_\nu(D_G)$ for an eigenvalue $\nu \in \R$ of $D_G$ takes the form
	\begin{equation*}
		E_\nu(D_G) = \bigoplus_{\sigma \in \Irrep(G)} V_\sigma \tensor E_\nu(\bar \sigma),
	\end{equation*}
	where, for each $\sigma \in \Irrep(G)$, $E_\nu(\sigma)$ denotes the $\nu$-eigenspace of the self-adjoint operator $D_\sigma := \sum_{k=1}^n \partial_{X_k} \sigma \tensor F_{X_k}$ on $V_\sigma \tensor \hH_\spin$.
Since $D_G$ has compact resolvent (see, \eg,~\cite{friedrich2000,Rieffel08}), each eigenspace $E_\nu(D_G)$ is finite-dimensional, and hence for a given $\nu$ only finitely many $E_\nu(\sigma)$, $\sigma \in \Irrep(G)$, are non-zero.
\end{proof}
	
\begin{proof}[Proof of Lemma~\ref{lem:comres2}]
	Let $\hH_\sigma$, $\sigma \in \Irrep(G)$, be the finite-dimensional multiplicity spaces of the unitary representation $u: G \to \mathcal U(\hH)$ and, for convenience, let us assume that
	\begin{align*}
		\hH &= \bigoplus_{\sigma \in \Irrep(G)} \hH_\sigma \tensor \bar V_\sigma, 
		&
		&\text{and}
		&
		u_g & = \bigoplus_{\sigma \in \Irrep(G)} \one_{\hH_\sigma} \tensor \bar \sigma_g.
	\end{align*}
	It is a simple matter to check that $D$ commutes with the elements of the C\Star-subalgebra $\bigoplus_{\sigma \in \Irrep(G)} \End(\hH_\sigma) \tensor \one_{\bar{V}_\sigma} \tensor \one_{\hH_\text{spin}}
	\subseteq \End(\hH \tensor \hH_\text{spin})$, and so do the spectral projections of $D$.
	It follows that the eigenspace $E_\nu(D)$ of any eigenvalue $\nu$ takes the form
	\begin{align*}
		E_\nu(D) = \bigoplus_{\sigma \in \Irrep(G)} \hH_\sigma \tensor E_\nu(\bar{\sigma}).
	\end{align*}
	Since each $\hH_\sigma$, $\sigma \in \Irrep(G)$ is finite-dimensional and only finitely many $E_\nu(\sigma)$, $\sigma \in \Irrep(G)$, are non-zero, we can assert that $E_\nu(D)$ is finite-dimensional. 
	The same argument shows that  $\text{spec}(D) \subseteq \text{spec}(D_G)$. 
	In particular, the eigenvalues of $D$ form a discrete set. 
\end{proof}

\begin{proof}[Proof of Lemma~\ref{lem:coisometry}, ``$(b) \Rightarrow (a)$'']
Let $\mu: G \to \mathcal U(\hH)$ be a unitary representation with finite-dimensional multiplicity spaces, let's say, $\hH_\sigma$, $\sigma \in \Irrep(G)$, like in Equation~\eqref{eq:decompH}, let $(\pi,u)$ be a faithful covariant representation of $(\aA,G,\alpha)$ on some Hilbert space $\hH_\aA$, and let $s \in \End(\hH_\aA \tensor L^2(G), \hH_\aA \tensor \hH)$ be an isometry satisfying the Equations~\eqref{eq:SOPleft},~\eqref{eq:SOPequivariance}, and~\eqref{eq:SOPcommuting}. Our analysis starts with the simple observation that Equation~\eqref{eq:SOPcommuting} is equivalent to saying that $s$ is an intertwiner between the representations $\one_\aA \tensor \lambda$ and $\one_\aA \tensor \mu$ on $\hH_\aA \tensor L^2(G)$ and $\hH_\aA \tensor \hH$, respectively. 
	In particular, $s$ maps each multiplicity space of $\one_\aA \tensor \lambda$ into the corresponding multiplicity space of $\one_\aA \tensor \mu$ or, to be more precise, $\hH_\aA \tensor V_\sigma$ into $\hH_\aA \tensor \hH_\sigma$ for all $\sigma \in \Irrep(G)$. Consequently, $s$ may be disassembled into a family of isometries 
	\begin{equation*}
	s(\sigma) \in \End(\hH_\aA \tensor V_\sigma,\hH_\aA \tensor \hH_\sigma), \qquad \sigma \in \Irrep(G).
	\end{equation*}
	We proceed with a fixed $\sigma \in \Irrep(G)$ and write $p_\sigma$ for the orthogonal projection onto the isotypic component $ L^2(G)(\bar \sigma) = V_\sigma \tensor \bar V_\sigma$. Then $p_\sigma \in \Com \bigl( L^2(G) \bigr)$ and $s (1_\aA \tensor p_\sigma)$ lies in $\aA \tensor \Com(L^2(G),\hH)$, the latter being a consequence of Equation~\eqref{eq:SOPleft}. Moreover, a moment's thought shows that restricting $s (1_\aA \tensor p_\sigma)$ to $\hH_\aA \tensor V_\sigma$ and $\hH_\aA \tensor \hH_\sigma$ in domain and codomain, respectively, gives an operator that is equal to $s(\sigma)$. It follows that $s(\sigma) \in \aA \tensor \End( V_\sigma,\hH_\sigma)$, and hence that $\alpha_g\bigl(s(\sigma)\bigr)=s(\sigma) (\one_\aA \tensor \sigma_g)$ for all $g \in G$ due to Equation~\eqref{eq:SOPequivariance}. 
	As $\sigma$ was arbitrary and the \Star-representation $\pi$ assumed to be faithful, \cite[Lem.~3.2]{SchWa17} now implies that $(\aA, G, \alpha)$ is free, and this is precisely the desired conclusion. 
	\qedhere
\end{proof}

\section*{Acknowledgement}

This research was supported through the program ``Research in Pairs'', \href{https://owpdb.mfo.de/detail?photo_id=23078}{RiP (1910p)}, by the Mathematisches Forschungsinstitut Oberwolfach in 2019. 
The authors also wish to thank the Centre International de Rencontres Math\'ematiques, REB (2177), and Blekinge Tekniska H\"ogskola for their financial support in facilitating this collaboration.
The first name author is indepted to iteratec GmbH.
Finally, the authors wish to thank  Walter van Suijlekom for pointing out the relation to unbounded $KK$-theory.



\bibliographystyle{abbrv}
\bibliography{short,RS}

\end{document}